\newtheorem{Theorem}{Theorem}[section]
\newtheorem{theorem}{Theorem}[section]
\newtheorem{corollary}[Theorem]{Corollary}
\newtheorem{lemma}[Theorem]{Lemma}
\newtheorem{Fact}[Theorem]{Fact}
\newtheorem{fact}[Theorem]{Fact}
\newtheorem{remark/def}[Theorem]{Remark/Definition}
\newtheorem{claim}[Theorem]{Claim}
\theoremstyle{definition}
\newtheorem{remark}[Theorem]{Remark}
\newtheorem{definition}[Theorem]{Definition}
\newtheorem{notation}[Theorem]{Notation}
\newtheorem{note}[Theorem]{Note}
\newtheorem{question}[Theorem]{Question}
\newtheorem{def/rem}[Theorem]{Definition/Remark}
\newsavebox{\indbin}
\savebox{\indbin}{\begin{picture}(0,0)
\newlength{\gnu}
\settowidth{\gnu}{$\smile$} \setlength{\unitlength}{.5\gnu}
\put(-1,-.65){$\smile$} \put(-.25,.1){$|$}
\end{picture}}
\def \indo {\mathop{\smile \hskip -0.9em ^| \ }}
\newcommand{\be}{\begin{enumerate}}
\newcommand{\bd}{\begin{defn}}
\newcommand{\bt}{\begin{theorem}}
\newcommand{\bl}{\begin{lemma}}
\newcommand{\ee}{\end{enumerate}}
\newcommand{\ed}{\end{defn}}
\newcommand{\et}{\end{theorem}}
\newcommand{\el}{\end{lemma}}
\newcommand{\sm}{\setminus}
\newcommand{\CP}{{\mathcal P}}
\newcommand{\CV}{{\mathcal V}}
\newcommand{\CB}{{\mathcal B}}
\newcommand{\CC}{{\mathcal C}}
\newcommand{\CL}{{\mathcal L}}
\newcommand{\CM}{{\mathcal M}}
\newcommand{\CN}{{\mathcal N}}
\newcommand{\CT}{{\mathcal T}}
\newcommand{\CU}{{\mathcal U}}
\newcommand{\BN}{{\mathbb N}}
\newcommand{\BZ}{{\mathbb Z}}
\newcommand{\BQ}{{\mathbb Q}}
\newcommand{\BR}{{\mathbb R}}
\newcommand{\CS}{{\mathcal S}}
\newcommand{\id}{\operatorname{id}}
\newcommand{\aut}{\operatorname{Aut}}
\newcommand{\autf}{\operatorname{Autf}}
\newcommand{\gal}{\operatorname{Gal}}
\newcommand{\sdist}{\operatorname{Sd}}
\newcommand{\bsdist}{\operatorname{\widehat{S}d}}
\newcommand{\Th}{\operatorname{Th}}
\def\eq{\operatorname{eq}}
\def\cl{\operatorname{cl}}
\def\dcl{\operatorname{dcl}}
\def\bdd{\operatorname{bdd}}
\def\acl{\operatorname{acl}}
\def\ker{\operatorname{Ker}}
\def\tp{\operatorname{tp}}
\def\stp{\operatorname{stp}}
\def\ltp{\operatorname{Ltp}}
\def\x{\bar{x}}
\def\y{\bar{y}}
\def\z{\bar{z}}
\def\a{\bar{a}}
\def\b{\bar{b}}
\def\raw{\rightarrow}
\def\sm{\setminus}
\def\ob{\operatorname{Ob}}
\def\mor{\operatorname{Mor}}
\def\supp{\operatorname{supp}}
\def\Bd{\partial}
\title{Lascar groups and the first homology groups of strong types in rosy theories}
\author{Junguk \textsc{Lee}}
\address{Department of Mathematics\\ Yonsei University\\
50 Yonsei-Ro, Seodaemun-Gu\\
Seoul 120-749, Korea}
\email{ljw@yonsei.ac.kr}
\begin{document}

\begin{abstract}
For a rosy theory, we give a canonical surjective homomorphism from a Lascar group over $A=\acl^{eq}(A)$ to a first homology group of a strong type over $A$, and we describe its kernel by an invariant equivalence relation. As a consequence, we show that the first homology groups of strong types in rosy theories have the cardinalities of one or at least $2^{\aleph_0}$. We give two examples of rosy theories having non trivial first homology groups of strong types over $\acl^{eq}(\emptyset)$. In these examples, these two homology groups are exactly isomorphic to their Lascar group over $\acl^{eq}(\emptyset)$.
\end{abstract}

\maketitle

\section{Introduction}
In model theory, strong types and Lascar types are important objects to understand invariant equivalence relations in a theory. Let $T=T^{eq}$ be a theory and $\CC$ be a monster model of $T$. Let $A$ be a small set in $\CC$. We say that an equivalence relation $E$ is {\em finite} if it has finitely many $E$-classes, {\em bounded} if the number of $E$-classes are less than the cardinality of $\CC$, and {\em $A$-invariant} if for $a,b$ and $f\in\aut_A(\CC)$, $(a,b)\in E$ implies $(f(a),f(b))\in E$. We say finite tuples $a,b$ have the same {\em strong(or Shelah) type over $A$}, written $a\equiv_A^s b$ if for any $A$-definable finite equivalence relation $E$ over $\CC^{|a|}$, $(a,b)\in E$. It is well known that $a\equiv_A^s b$ if and only if $a\equiv_{\acl(A)} b$. Thus we take $\stp(a/A)$, the strong type of $a$ over $A$ as $\tp(a/\acl(A))$, and it is the orbit of $a$ under the action of $\aut_{\acl(A)}(\CC)$. So, it makes sense to consider a strong type of an infinite tuple over $A$. The Lascar types over $A$ are defined as follows : We say tuples $a,b$ possibly infinite length have the same {\em Lascar type over $A$}, written $a\equiv_A^L b$ if for any $A$-invariant bounded equivalence relation $E$ over $\CC^{|a|}$, $(a,b)\in E$. Like strong types, a well known fact is that $a\equiv_A^L b$ if and only if there is a finite sequence $(a_0,a_1,\ldots,a_n)$ in $\CC^{|a|}$ such that $a_0=a$, $a_n=b$, and $a_i\equiv_{M_i} a_{i+1}$ for a submodel $M_i$ of $\CC$ containing $A$ for each $i=0,\ldots,n-1$. So $\ltp(a/A)$, the Lascar type of $a$ over $A$ is the orbit of $a$ under the action of $\autf_{A}(\CC)$ which is the group of automorphism fixing a submodel containing $A$, and $\autf_{A}(\CC)$ is a normal subgroup in $\aut_A(\CC)$. So, it is well defined the group $\gal_L(\CC;A)=\aut_A(\CC)/\autf_A(\CC)$, called the Lascar group over $A$. One asks when $\stp=\ltp$ in $T$ and it is determined by $\gal_L(\CC;A)$ for $A=\acl(A)$.

Two notions of strong and Lascar types are also important in the context of classification theory. In stable theories, each type over a model $A$ is stationary; it has only one of unique non-forking extension over each $B$ containing $A$. And this stationarity over a model is a characterization property of stable theories. In the case when a theory has a ternary invariant independence relation satisfying symmetry, transitivity, extension, finite and local characters, and stationarity over a model, it is well known that the theory is stable.  In simple theories, the stationarity over models is substituted by type amalgamation over a model( or $3$-amalgamation); for a model $A$ and $a_1,a_2,b_1,b_2$ with $a_1\equiv_A a_2$, if $\{a_i,b_i\}$ for $i=1,2$ and $\{b_1,b_2\}$ are independent over $A$, then there is $a_3\equiv_{Ab_i} a_i$ for $i=1,2$ such that $\{a_3,b_1,b_2\}$ is independent over $A$. One generalizes $3$-amalgamation to $n$-amalgamation for $n\ge 3$, called generalized amalgamation properties. We assume $T=T^{heq}$ in the case that $T$ is simple. It is well known that each strong type over $A$ which is a type over $\acl(A)$ is stationary in a stable theory $T$, and if $T$ is simple, $a\equiv_A^L b$ if and only if $a\equiv_{\bdd(A)}b$ and each types over $\bdd(A)$ satisfies $3$-amalgamation, where $\bdd(A)$ is the set of elements in $\CC^1$ whose the cardinality of orbit under $\aut_A(\CC)$ is less than $|\CC|$. In \cite{GKK}, J. Goodrick, B. Kim, and A. Kolesnikov introduced homology groups related with generalized amalgamation property for strong types in the context of rosy theories. They computed the first homology groups for some cases in \cite{GKK}(which were all zero), and in stable theories(of course, $3$-amalgamation holds in this case), they gave an explicit description of higher homology groups in \cite{GKK1}\cite{GKK2}. It was not known much about the first homology groups in general. We supposed that the first homology group for a strong type $p$  in a rosy theory is related with a Lascar group. Indeed, in \cite{KKL}, B. Kim, S. Kim, and the author showed that for a Lascar strong type $p$, the first homology group for $p$ is always zero, so even though in a rosy theory, Lascar strong type has no $3$-amalgamation still it satisfies a kind of complicated form of amalgamation.

In this paper, instead of the original definition of the first homology group in \cite{GKK}, we work with a restricted one, called a first homology group {\em over $A$} in a strong type over $A$ for $A=\acl(A)$ and we give a canonical surjective homomorphism from the Lascar group over $A$ into these first homology groups in strong types over $A$ in rosy theories. We describe its kernel by an invariant bounded equivalence relation whose classes are described by some subgroup of the automorphism group. From this description of the kernel, we deduce that the cardinality of this first homology group for a strong type is always one or at lest $2^{\aleph_0}$. At last, we give two examples of rosy theories having a strong type over $\acl^{eq}(\emptyset)$ with a non trivial first homology group exactly isomorphic to their Lascar groups over $\acl^{eq}(\emptyset)$. From known examples in \cite{GKK}\cite{KKL} and our two examples, we conjecture that these first homology group in strong types over $A$ are isomorphic to the abelianization of Lascar group over $A=\acl(A)$ under the assumption that the algebraic closures of non-empty small sets are again models in a rosy theory $T$.\\


\medskip

We review some notions and facts from
\cite{GKK1},\cite{GKK} and \cite{KKL}. First we recall the definitions
of simplices and the corresponding homology groups introduced in
\cite{GKK1},\cite{GKK}. {\bf Throughout we work with a large saturated model $\CC(=\CC^{\eq})$ whose theory $T(=T^{eq})$ is rosy with the thorn-independence relation} $\indo$ {\bf on the small sets of $\CC$. For a small $A\subset \CC$, we denote the algebraic closure and definable closure in the home sort as $\acl_{\CC}(A)$ and $\dcl_{\CC}(A)$, and in the imaginary sort as $\acl_{\CC}^{eq}(A)$ and $\dcl_{\CC}^{eq}(A)$. If there is no risk of confusion, we shall write $\acl(A)$ and $\dcl(A)$ in both cases. We fix a small algebraically closed set $A=\acl(A)$ and $p(x)\in S(A)$ (with possibly infinite $x$)}. When we say $T$ is simple, we consider $T=T^{heq}$.

\medskip

Let $\CS_A$ denote the category, where 
\be

	\item The objects are small subsets of $\CC$ containing $A$, and 
	
	\item The morphisms are elementary maps which fix $A$ pointwise.
	
\ee
And for a finite $s \subset \omega$, the power set of $s$, $\CP(s)$ forms the category as an ordered set :
\be
	
	\item $\ob(\CP(s))=\CP(s)$, and
	
	\item For $u, v \in \CP(s)$, $\mor(u,v)=\{\iota_{u,v}\}$, where $\iota_{u,v}$ is the single inclusion map for $u\subseteq v$, or $=\emptyset$ otherwise.
	
\ee
For a functor $f:\CP(s) \to \CC_A$ and $u\subseteq v \in \CP(s)$, we
write $f^u_v:=f(\iota_{u,v})\in \mor(f(u),f(v))$ and $f^u_v(u):=f^u_v(f(u))\subseteq f(v)$.

\begin{definition}
A functor $f:\CP(s) \to \CS_A$ for some finite $s \subset \omega$ is
said to be a {\em closed independent (regular) $n$}-{\em simplex} in $p$ if 
\be
\item  $|s|=n+1$

\item $f(\emptyset) \supseteq A$; and for $i\in s$, $f(\{i\})$ is of the form $\acl(Ca)$ where $a(\models p)$ is independent with $C=f^{\emptyset}_{ \{i\} }(\emptyset)$ over $A$.

\item
For all non-empty $u\in \CP(s)$, we have
$$f(u) = \acl(A \cup \bigcup_{i\in u} f^{\{i\}}_u(\{i\}));$$
and $\{f^{\{i\}}_u(\{i\})|\ i\in u \}$ is independent over $f^{\emptyset}_u(\emptyset)$.

\ee
We say $f$ is {\em over $A$} if $f(\emptyset)=A$(so for any $u\subset s$, $f^{\emptyset}_u(\emptyset)=A$). We shall call a closed independent $n$-simplex simply by an {\em $n$-simplex}. The set $s$ is called the {\em support of $f$}, denoted by $\supp(f)$. 
\end{definition}
In this paper, we only consider simplices over $A$. {\bf We fix an enumeration of $\acl(aA)$ for each $a\in \CC^{|x|}$} such that $a,b\in \CC^{|x|}$, $a\equiv_A b$ if and only if $\acl(aA)\equiv\acl(bA)$ because in \cite{KKT}, there is a counterexample which fails generalized amalgamation properties without fixing enumeration of bounded closed set.

\begin{definition}
Let $S_{n}(p;A)$ denote the collection of all $n$-simplices over $A$ in $p$ and
$C_{n}(p;A)$ the free abelian group generated by $n$-simplices in $S_{n}(p;A)$; its elements are called $n$-{\em chains over $A$} in $p$.

A non-zero $n$-chain $c$ is uniquely written (up to permutation of $i$'s) as $c=\sum_{1\le i\le
k}\limits n_i f_i$, where $n_i$ is a non-zero integer and
$f_1,\ldots,f_k$ are distinct $n$-simplices. We call $|c|:=|n_1|+\cdots+|n_k|$ the {\em length} of the chain $c$, and
define the {\em support} of
$c$ as the union of $\supp(f_i)$'s. 
\end{definition}

\noindent We use $a,b,c,\ldots,f,g,h,\ldots,\alpha,\beta,\ldots$ to denote
simplices and chains. Now we define the boundary operators and using
the boundary operators we will define homology groups.

\begin{definition}
Let $n \geq 1$ and $0 \leq i \leq n$. The $i$-th {\em boundary
operator} $\Bd_n ^i : C_n(p;A) \raw C_{n-1}(p;A)$ is defined so
that if $f$ is an $n$-simplex with domain $\CP(s)$ with
$s = \{s_0<\cdots<s_n \}$, then
    \begin{center}
        $\Bd_n^i(f)=f\upharpoonright \CP(s\sm\{s_i\})$
    \end{center}
and extended linearly to all $n$-chains in $C_n(p)$.

The {\em boundary map} $\Bd_n
: C_n(p;A)\raw C_{n-1}(p;A)$ is defined by the rule
    \begin{center}
        $\Bd_n(c)=\sum_{0\leq i\leq n}\limits (-1)^i \Bd_n^i(c)$.
    \end{center}

We write $\Bd^i$ and $\Bd$ for $\Bd_n^i$ and $\Bd_n$,
respectively, if $n$ is clear from context.
\end{definition}

\begin{definition}
The kernel of $\Bd_n$ is denoted $Z_{n}(p;A)$, and its elements
are called ($n$-){\em cycles over $A$}. The image of $\Bd_{n+1}$ in $C_n(p;A)$
is denoted by $B_{n}(p;A)$ and its elements are called ($n$-){\em boundaries over $A$}.
\end{definition}

Since $\Bd_n\circ\Bd_{n+1}=0$, $B_n(p;A)\subseteq Z_n(p;A)$
and we can define simplicial homology groups in $p$.

\begin{definition}
The $n$-th ({\em simplicial}) {\em homology group over $A$} in $p$ is
\[H_{n}(p;A):= Z_{n}(p;A)/B_{n}(p;A).\]
\end{definition}
\noindent In \cite{GKK}, original simplicial homology groups in $p$ were defined using not only simplices over $A$ in $p$ but also other simplices in $p$. But in this paper, we consider the homology groups over $A$ in $p$.
\begin{notation}
We shall abbreviate $S_n(p;A),C_n(p;A),\ldots$ as $S_n(p),C_n(p),\ldots$ and we shall also abbreviate $H_n(p;A)$ simply as $H_n(p)$.
\end{notation}

\begin{definition}
For $n \geq 1$, an $n$-chain $c$ is called an $n$-{\em shell} if it is in the
form 
$$c= \pm\sum_{0\leq i\leq n+1}\limits (-1)^i f_i,$$
where $f_0,\cdots,f_{n+1}$ are $n$-simplices such that whenever
$0\leq i < j \leq n+1$, we have $\Bd^i f_j = \Bd^{j-1} f_i$. Specially, a $1$-shell $c$ is of the form $$c=f_{0}-f_{1}+f_{2}.$$
\end{definition}

\begin{remark} The boundary of an $2$-simplex is a $1$-shell, and the boundary of any $1$-shell is $0$.
\end{remark}

\begin{definition}
For $n\geq 0$, we say $p$ has $(n+2)$-{\em amalgamation} if any $n$-shell in $p$ is the boundary of some $(n+1)$-simplex in $p$, and $p$ has $(n+2)$-{\em complete amalgamation} (or simply $(n+2)$-CA) if $p$ has $k$-amalgamation for every $2 \leq k \leq n+2$. By extension axiom of the independence relation, whenever $f : \CP(s) \raw
\CC_A$, $g : \CP(t) \raw \CC_A \in S(p)$ and $f\upharpoonright
\CP(s\cap t) = g \upharpoonright \CP(s\cap t)$, then $f$ and $g$ can be extended to a simplex $h : \CP(s\cup t) \raw \CC_A$ in $p$. This property is called {\em strong} $2$-{\em amalgamation}.
\end{definition}

The following fact shows why the notion of shells is important.

\begin{fact}\label{funfact}\cite{GKK1},\cite{GKK}
If $p$ has $(n + 1)$-CA for some $n \geq 1$, then
    \begin{center}
    $H_n(p) = \{ [c] : c \mbox{ is an } n\mbox{-shell over }A \mbox{
      with }\supp(c)=\{0,\ldots,n+1\}\;\}$.
    \end{center}
Thus the first homology group in $p$ is generated by $1$-shells in $p$ with its support $\{0,1,2\}$.
\end{fact}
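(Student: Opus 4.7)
The plan is to show that every $n$-cycle $z \in Z_n(p;A)$ is homologous modulo $B_n(p;A)$ to a single $n$-shell supported on $\{0,1,\ldots,n+1\}$. The argument naturally splits into two phases: a \emph{support reduction} bringing $\supp(z)$ down to an $(n+2)$-element set, and a \emph{shell consolidation} transforming the resulting cycle into a single shell.

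For the support reduction, I would induct on $|\supp(z)|$. Suppose $|\supp(z)| > n+2$ and let $k$ be the largest element of $\supp(z)$. For each $n$-simplex $f$ of $z$ with $k \in \supp(f)$, write $\supp(f) = t \cup \{k\}$ with $|t|=n$ and pick a fresh index $k^* \in \omega \setminus \supp(z)$. Using strong $2$-amalgamation (from the extension axiom) together with the $j$-amalgamation for $2 \le j \le n+1$ supplied by $(n+1)$-CA, construct an $(n+1)$-simplex $g_f$ over $A$ with support $t \cup \{k, k^*\}$ having $f$ as the face opposite $k^*$. Then $\Bd g_f$ contributes $\pm f$ plus $n$-simplices on proper $(n+1)$-subsets of $t \cup \{k,k^*\}$; crucially, the face opposite $k$ has support $t \cup \{k^*\}$, avoiding $k$. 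Subtracting $\pm\Bd g_f$ from $z$ for each offending $f$ cancels the $k$-containing simplices, with the cycle condition $\Bd z = 0$ ensuring that residual $k$-containing terms cancel pairwise across the chain. Iterating eliminates each extraneous index and yields a homologous cycle supported on $\{0,1,\ldots,n+1\}$.

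For the shell consolidation, each $n$-simplex of the reduced $z$ has support $s_i := \{0,\ldots,n+1\} \setminus \{i\}$ for some $0 \le i \le n+1$. Using $(n+1)$-amalgamation, any two $n$-simplices with the same support and matching codimension-one faces differ by a boundary, so $z$ becomes homologous to a sum $\sum_i m_i h_i$ with $\supp(h_i) = s_i$ and the $h_i$'s mutually compatible on faces. The cycle condition forces $m_i = (-1)^i m$ for a common $m \in \BZ$, so $z$ represents $m$ times a shell class. That integer multiples of shell classes are themselves shell classes follows by amalgamating two $(n+1)$-fillings of a shell $c$ along a common face using $(n+1)$-CA to produce a shell $c'$ with $[c'] = 2[c]$, and iterating.

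The main obstacle is the bookkeeping in the support reduction: constructing the $g_f$'s requires iterated lower-dimensional amalgamations (this is where the full strength of $(n+1)$-CA, rather than just $(n+1)$-amalgamation, is essential), and one must verify that the subtraction $z \mapsto z - \sum_f \pm\Bd g_f$ really eliminates all $k$-containing simplices consistently via the cycle condition, so that no residual higher-$k$ terms survive when the process terminates.
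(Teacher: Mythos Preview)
The paper does not prove this statement: it is recorded as a \textbf{Fact} with citations to \cite{GKK1} and \cite{GKK}, and no argument is supplied here. So there is no in-paper proof to compare against; I can only assess your outline on its own terms.

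Your two-phase plan (support reduction, then shell consolidation) is the right shape, but each phase has a real gap as written.

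\emph{Support reduction.} You construct each $g_f$ independently and then assert that the residual $k$-containing faces of the various $\Bd g_f$ cancel ``via the cycle condition $\Bd z=0$''. But the faces of $g_f$ opposite elements of $t$ have support $(t\setminus\{j\})\cup\{k,k^*\}$, so they still contain $k$. For two such faces coming from different $f,f'$ (sharing an $(n-1)$-face $h$ with $k\in\supp(h)$) to cancel, you need $g_f$ and $g_{f'}$ to \emph{agree} on $\supp(h)\cup\{k^*\}$. That compatibility is not free; it requires building the $g_f$'s coherently over the face lattice (a prism/fan construction), and that inductive construction is precisely where the lower amalgamation levels in $(n+1)$-CA get used. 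You flag this as ``the main obstacle'' but do not indicate how the compatible system is actually produced.

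\emph{Shell consolidation.} The claim that $(n+1)$-amalgamation makes two $n$-simplices with the same support and matching $(n-1)$-faces differ by a boundary is off by one: $(n+1)$-amalgamation fills $(n-1)$-shells by $n$-simplices, whereas exhibiting $f-f'\in B_n$ with $\Bd f=\Bd f'$ asks for an $(n+1)$-chain, and the direct filling approach needs $(n+2)$-amalgamation, which you do not have. The final step is also problematic: ``amalgamating two $(n+1)$-fillings of a shell $c$'' presupposes $c$ bounds an $(n+1)$-simplex, i.e.\ $[c]=0$ already (and again this is $(n+2)$-amalgamation). Closure of shell classes under sums must be argued differently, e.g.\ by concatenating two shells along a shared face and re-indexing supports---the $1$-dimensional version of this is exactly what Theorem~\ref{endpt_gpstr} in this paper does with endpoint pairs.
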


\noindent So, we have that $H_1(p)$ is trivial iff  any 1-shell of its support $\{0,1,2\}$ in
$p$ is the boundary of some 2-chain in $p$. Therefore, if $T$ is
simple, due to 3-amalgamation $H_1(p)$ is trivial. The following shows
that the same result holds in any rosy theory.
\begin{fact}\label{h1=0}\cite{KKL}
Suppose that $p$ is any Lascar strong type in a rosy theory. Then $H_1(p)=0$.
\end{fact}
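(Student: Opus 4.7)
The plan is to apply Fact~\ref{funfact} with $n=1$: since strong $2$-amalgamation holds automatically from the extension axiom for $\indo$, $H_1(p)$ is generated by classes of $1$-shells $c=f_0-f_1+f_2$ of support $\{0,1,2\}$. It thus suffices to exhibit, for every such shell, a $2$-chain in $p$ with boundary $c$. Denote the shared vertices by $\alpha_i:=f_j(\{i\})=\acl(Aa_i)$ (for any $j\neq i$), so that $a_0,a_1,a_2\models p$.

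The hypothesis that $p$ is a Lascar strong type yields $a_0\equiv_A^L a_1\equiv_A^L a_2$, so each pair of realizations is linked by a finite chain of same-type-over-a-model identifications. The core of the argument is a \emph{rerouting lemma}: if $c'$ is a $1$-shell obtained from $c$ by replacing a single vertex $\alpha_i=\acl(Aa_i)$ by $\alpha_i'=\acl(Aa_i')$ where $a_i\equiv_M a_i'$ for some model $M\supseteq A$, then $c-c'$ is the boundary of a $2$-chain. Iterating this lemma along a chain witnessing the Lascar equivalence of the $a_i$, the problem reduces to showing that a shell whose three vertices are chosen in a highly symmetric position---say as thorn-independent copies of a common realization over a model containing $A$---is a boundary; for such an ``aligned'' configuration one writes down an explicit $2$-simplex, or a short $2$-chain, using only strong $2$-amalgamation and extension of $\indo$.

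The rerouting lemma is the main obstacle, precisely because rosy theories do not satisfy $3$-amalgamation over models in general. The trick is that the replacement $a_i\leftrightarrow a_i'$ is implemented by an elementary map fixing $M$ pointwise, so $a_i$ and $a_i'$ are maximally interchangeable over $M$; this allows the construction of a pair of auxiliary $2$-simplices by thorn-independent extension over $M$, whose alternating-sum boundary contains exactly the ``before versus after'' difference $c-c'$ with all other face contributions cancelling in pairs. Making this cancellation work rigorously, while simultaneously preserving the coherence conditions $\Bd^i f_j=\Bd^{j-1}f_i$ required of a $1$-shell at each step of the rerouting, is the combinatorial heart of the argument and the point where the Lascar-strong-type hypothesis on $p$ is used in an essential way.
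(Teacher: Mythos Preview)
This fact is cited from \cite{KKL} and not proved in the present paper, but the machinery developed in \S2.1 makes the intended argument visible, and it differs from your outline in an essential way. The paper does not analyse a $1$-shell through its three vertices $a_0,a_1,a_2$; instead it attaches to $s=f_{01}+f_{12}-f_{02}$ a \emph{representation} $(a_0,a_1,a_2,a_3)$ and shows (Theorem~\ref{endpt_class}) that the class $[s]$ depends only on the \emph{endpoint pair} $(a_0,a_3)$, where $a_3\equiv_A a_0$ but need not equal $a_0$. Since $p$ is a Lascar strong type one has $a_0\equiv^L_A a_3$, and Fact~\ref{Lascar_zero} (also from \cite{KKL}) says precisely that then $[a_0,a_3]=0$; the explicit $2$-chain witnessing this when $a_0\equiv_{Ad} a_3$ for a single independent $d\models p$ is the three-term chain built exactly as in the $(\Leftarrow)$ direction of Theorem~\ref{NSfor[s]=0inM}, and the general Lascar-equivalence case follows by concatenation via Theorem~\ref{endpt_gpstr}.

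Your proposal has a genuine gap at the very place you flag as ``the combinatorial heart''. The rerouting lemma is not even well-posed: a $1$-shell is not determined by its three vertex objects, so ``replacing $\alpha_i$ by $\alpha_i'$'' does not specify a new shell $c'$ until you say what the two incident edges become, and the natural choice (transport by an automorphism over $M$) moves the other vertices as well, destroying the shell coherence you need. More seriously, your terminal step does not hold as stated: even when $a_0,a_1,a_2$ are thorn-independent realizations of $p$ over a model, the shell need not bound a $2$-simplex, because the obstruction is the discrepancy $a_0\neq a_3$ in the representation, and your vertex-by-vertex alignment never addresses that fourth element. What actually makes the argument work is the endpoint-pair calculus, which recasts the problem from ``amalgamate three compatible edges'' (this would be $3$-amalgamation, unavailable in general rosy theories) to ``the two endpoints have the same Lascar type over $A$'', a condition the hypothesis on $p$ supplies for free.
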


We introduce a notion of type homologies in \cite{GKK}. We call types with possibly infinite sets of variables $*$-types. We fix a set $\CV$ of variables which is large enough so that all variables in $*$-types come from the set $\CV$ and $|\CC|>2^{|\CV|}$. For any $X\subset \CV$, any injective function $\sigma:\ X\rightarrow \CV$, and any $*$-type $p(\x)$ with $\x\subset X$, we let $\sigma_* p:=\{\phi(\sigma(\x)):\ \phi(\x)\in p\}$. For $A=\acl(A)$, let $\CT_A$ be the category, where
\be
	\item The objects of $\CT_A$ are all the complete $*$-types in $T$ over $A$, including a single distinguished type $p_{\emptyset}$ with no free variables;
	\item $\mor_{\CT_A}(p(\x),q(\y)$ is the set of all injective maps $\sigma:\ \x\rightarrow \y$ such that $\sigma_* p\subset q$. 
\ee
\begin{definition}
Let $A=\acl(A)$ and $p\in S(A)$. A {\em closed independent type-$n$-smplex in $p$} is a functor $f:\ \CP(s)\rightarrow \CT_A$ for $s\subset \omega$ such that

\be
	\item $|s|=n+1$.
	\item Let $w\subset s$ and $u,v\subset w$. Set $f_w^u:=f(\iota_{u,w})$. Write $\x_w$ as the variable set of $f(w)$. Then whenever $\a$ realizes the type $f(w)$ and $\a_u$, $\a_v$, and $\a_{u\cap v}$ denote subtuples corresponding to the variable sets $f_w^u(\x_u)$, $f_w^v(\x_v)$, and $f_w^{u\cap v}(\x_{u\cap v}$, then $$\a_u\indo_{A\cup \a_{u\cap v}} \a_v.$$
	\item For all non-empty $u\subset s$ and any $\a$ realizing $f(u)$, we have $\a=\acl(A\cup\bigcup_{i\in u}\a_{\{i\}}$.
	\item For $i\in s$, $f(\{i\})$ is the complete $*$-type of $\acl(AC\cup\{b\})$ over $A$, where $C$ is some realization of $f(\emptyset)$ and $b$ is some realization of a nonforking extension of $p$ to $AC$.
\ee
We say $f$ is {\em over $A$} if $f(\emptyset)=A$.
\end{definition}
Using closed independent type-functors in $p$ we define the $n$-the type homology groups over $A$ in $p$, denoted by $H_n^{t}(p;A)$. We shall write $H_n^t(p;A)$ as $H_n^t(p)$. Then for each $n$, the $n$-homology groups $H_n(p;A)$ and $H_n^t(p;A)$ are non-canonically isomorphic, which is depending on the choice of enumerations of each $*$-types of closed independent simplices in $p$.

We see a notion of chain-walk notion, which is motivated from directed walk in graph theory, in \cite{KKL}\cite{KL}. The chain-walk was used to reduce a given a $2$-chain of $1$-shell boundary to one of simple form of $2$-chain having the same $1$-shell boundary and it is useful to compute the first homology group of a strong type. Two fundamental operations were used in reducing $2$-chains to the forms of chain-walks : {\em crossing} and {\em renaming support} operations. We refer the reader to \cite{KKL} for the definitions of crossing and renaming support operations and to \cite{KKL}\cite{KL} for the detail of classification of $2$-chains. In \cite{KL}, one defined the chain-walk using notion of direct walk in graph theory, here we give the definition of chain-walk in terms of simplces.
\begin{definition}\label{chainwalk}
Let $\alpha$ be a 2-chain having the boundary $f_{12} -f_{02} +
f_{01}$.
A subchain $\beta=\sum_{i=0}^m\limits \epsilon_i b_i$ of $\alpha$ (where $\epsilon_i = \pm 1$ and   $b_i$ is a $2$-simplex, for each $i$) is called
a {\em chain-walk in $\alpha$ from $f_{01}$ to $-f_{02}$} if
        \be\item there are non-zero numbers $k_0,\ldots,k_{m+1}$ (not necessarily distinct) such that   $k_0=1$, $k_{m+1}=2$, and for $ i\leq m$, $\supp(b_i)=\{k_i,k_{i+1},0\}$;
        \item  $(\Bd \epsilon_0 b_0)^{0,1} = f_{01}$, $(\Bd \epsilon_m b_m)^{0,2}=- f_{02}$; and
        \item for $0 \le i < m$,
        $$(\Bd \epsilon_i b_i)^{0, k_{i+1}}+ (\Bd \epsilon_{i+1} b_{i+1})^{0,k_{i+1}}=0.$$
        \ee
\end{definition}
\noindent Any $2$-chain having a $1$-shell boundary is reduced to a chain-walk $2$-chain having the same boundary of the support $\{0,1,2\}$.
\begin{fact}\label{supp=3}{\cite{KKL}\cite{KL}} Applying crossing and renaming support operation to a $2$-chain $\alpha$ with the $1$-shell boundary $f_{12} -f_{02}
+f_{01}$, it is reduced to a $2$-chain $\alpha'=\sum_{i=0}^{2n}\limits (-1)^i a_i$ with $|\alpha'|\le |\alpha|$, which itself is a chain-walk from $f_{01}$ to $-f_{02}$, and $\supp(\alpha')=\{0,1,2\}$.
\end{fact}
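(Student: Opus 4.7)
My plan is to reduce $\alpha$ by a finite sequence of crossing and renaming-support moves, each of which preserves the $1$-shell boundary $f_{12} - f_{02} + f_{01}$ and does not increase the length. The reduction proceeds in two stages: first I would normalize the supports of all $2$-simplices to $\{0,1,2\}$, then linearize the resulting chain into a walk.

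For the first stage, I would apply renaming-support operations to relabel vertex indices one simplex at a time. Since every $2$-simplex has support of size $3$ and the boundary faces $f_{01}, f_{02}, f_{12}$ already live on edges of $\{0,1,2\}$, the indices $0, 1, 2$ are effectively pinned by the boundary data; any simplex sharing a face with an already-normalized one inherits two vertex labels, and its third vertex is then renamed to the remaining element of $\{0,1,2\}$. Propagating along face incidences yields a chain $\alpha''$ with $\Bd \alpha'' = \Bd \alpha$, $\supp(\alpha'') = \{0,1,2\}$, and $|\alpha''| \leq |\alpha|$. For the second stage, I would use crossing to re-route adjacencies. The internal faces of $\alpha''$ (those which do not appear in $\Bd \alpha''$) must cancel in pairs, defining an adjacency structure on the multiset of $2$-simplices of $\alpha''$. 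Crossings can be used to arrange that every such adjacency occurs along an edge containing $0$, matching the chain-walk requirement $\supp(b_i) = \{k_i, k_{i+1}, 0\}$ of Definition~\ref{chainwalk}. Starting from the simplex $a_0$ whose $\{0,1\}$-face equals $f_{01}$ and following the pairing of internal $\{0, k\}$-faces through $a_1, a_2, \ldots$ produces a sequence with alternating signs $(-1)^i$, terminating at $a_{2n}$ whose $\{0,2\}$-face equals $-f_{02}$. Any disconnected closed loops of $2$-simplices sum to zero in $\Bd$ and are eliminated by further crossings, strictly decreasing the length.

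The principal obstacle is handling the $\{1,2\}$-type internal faces, which cannot serve as walk transitions because a chain-walk requires vertex $0$ to lie in every shared face. The technical heart of the argument is to show that every cancelling pair of $\{1,2\}$-faces can be rerouted through vertex $0$ by a composition of crossings, without increasing $|\alpha|$. This is the main content of the classification of $2$-chains in \cite{KL}; once it is established, the linear ordering in the chain-walk, and hence the conclusion, follows by induction on $|\alpha|$.
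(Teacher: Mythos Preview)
The paper does not prove this statement at all: it is recorded as a \emph{Fact} imported from \cite{KKL} and \cite{KL}, with the explicit remark that the definitions of the crossing and renaming-support operations, and the details of the classification of $2$-chains, are to be found in those references. So there is no in-paper proof to compare against; your proposal is effectively a sketch of how the cited argument might go.

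As a sketch it has the right overall shape, but your Stage~1 contains a real gap. Renaming the support of a single $2$-simplex changes the supports of \emph{all three} of its faces; if two of those faces were previously cancelling with faces of neighbouring simplices carrying different supports, after the rename they no longer cancel and the boundary is altered. Your propagation scheme (``inherit two labels from a normalized neighbour, rename the third'') does not resolve this: a simplex may be adjacent to two already-normalized simplices along different faces, forcing incompatible renamings of its third vertex, and disconnected components of the face-incidence graph receive no labels at all. In the actual arguments of \cite{KKL,KL} the order is essentially reversed: one first extracts a chain-walk (Definition~\ref{chainwalk}) inside $\alpha$, possibly with large support, using the cancellation pattern of internal faces, and only then uses renaming (together with crossing) along the linear walk structure to force $\supp=\{0,1,2\}$; the walk structure is exactly what guarantees the renamings are consistent. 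Your remark that closed loops ``are eliminated by further crossings, strictly decreasing the length'' is also not quite right as stated, since a crossing by itself rearranges adjacencies without changing $|\alpha|$; the length drop comes from recognising that a closed loop is itself a cycle with zero boundary, so it can simply be discarded from the chain.
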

\section{Lascar groups and the first homology groups}
In this section we show that there is a canonical epimorphism from the Lascar group $\gal_L(\CC;A)$ over $A$ into the first homology group $H_1(p)$ in $p$.

Let $f\colon \CP(s) \raw \CC_A$ be a $n$-simplex in $p$. For $u\subset s$ with $u = \{ i_0 <\ldots <i_k \}$, we shall write $f(u)=[a_0 \ldots a_k]_u$, where $a_j\models p$,
$f(u)=\acl(A,a_0\ldots a_k)$, and $\acl(a_j A)=f^{ \{ i_j \} } _u (\{i_j\})$, or we write $f(u)\equiv [a_0 \ldots a_k]_u$ by changing '$=$' to '$\equiv$'. In both cases, $\{a_0,\ldots,a_k\}$ is independent over $A$.
\subsection{Representations of 1-shells}
Given two $1$-shells $s_k=f^k_{01}+f^k_{12}-f^k_{02}$ with $k=0,1$, if $f^0_{ij}(\{i,j\})\equiv_A f^1_{ij}(\{i,j\})$ for $0\le i<j\le 2$, then the homology classes of $s_0$ and $s_1$ are same in $H_1(p)$ since $H_1^t(p)\cong H_1(p)$. From this, we introduce a notion of a representation of a $1$-shell and we describe the first homology group in $p$ using this notion.

\begin{def/rem}
Let $s=f_{01}+f_{12}-f_{02}$ be a $1$-shell such that $\supp(f_{ij})=\{i,j\}$ for $0\le i<j\le 2$. There is a quadruple $(a_0,a_1,a_2,a_3)$ in $p(\CC)^4$ such that $f_{01}(\{0,1\})\equiv[a_0 a_1]_{\{0,1\}}$, $f_{12}(\{1,2\})\equiv[a_1 a_2]_{\{1,2\}}$, and $f_{02}(\{0,2\})\equiv[a_3 a_2]_{\{0,2\}}$. We call this quadruple {\em a representation of $s$}.

Note that a representation for a $1$-shell need not be unique and it is possible that the same quadruple represents different $1$-shells even though they have the same support because given $a,b\models p$, the enumeration of $\acl(aA)$ is fixed but the enumeration of $\acl(abA)$ is not fixed.
\end{def/rem}

\begin{definition}
Let $s$ be a $1$-shell and $(a,b,c,a')$ be a representation of $s$. We call $a$ {\em an initial point}, $a'$ {\em a terminal point}, $(a,a')$ {\em an endpoint pair} of this representation.
\end{definition}

In the next theorem, we'll see that the endpoint pairs of representations determine the classes of $1$-shells in $H_1(p)$, and the group structure of $H_1(p)$ can be described by endpoint pairs.
\begin{theorem}\label{endpt_class}
Let $s_0$ and $s_1$ be $1$-shells of the support $\{0,1,2\}$. Suppose they have some representations of a same endpoint pair. Then $s_0-s_1$ is a boundary of a $2$-chain, that is, they are in the same homology class in $H_1(p)$
\end{theorem}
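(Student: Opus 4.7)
The plan is to construct an explicit $2$-chain $\alpha\in C_2(p;A)$ with $\partial\alpha=s_0-s_1$ by coning both $1$-shells over a single auxiliary vertex. Write the given representations as $(a,b_i,c_i,a')$ for $i=0,1$.

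First, using the extension property of $\indo$, I pick $e\models p$ with $e\indo_A\{a,a',b_0,c_0,b_1,c_1\}$. For each $i\in\{0,1\}$ I form three $2$-simplices over $A$, each employing a new vertex labelled $3$ whose single-vertex set is $\acl(eA)$: a simplex $T^i_{01}$ of support $\{0,1,3\}$ with vertices enumerated by $a$, $b_i$, $e$ and $\{0,1\}$-face $f^i_{01}$; a simplex $T^i_{12}$ of support $\{1,2,3\}$ with vertices $b_i$, $c_i$, $e$ and $\{1,2\}$-face $f^i_{12}$; and a simplex $T^i_{02}$ of support $\{0,2,3\}$ with vertices $a'$, $c_i$, $e$ and $\{0,2\}$-face $f^i_{02}$. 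The independence of $e$ from the old vertices, combined with the pairwise independence already guaranteed by the $1$-simplices of $s_i$, gives the full $3$-fold independence needed for each $T^i_{jk}$ to be a genuine $2$-simplex in $p$.

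Then I set $\alpha^i:=T^i_{01}+T^i_{12}-T^i_{02}$ and compute its boundary. The $\{1,3\}$-face of $T^i_{01}$ cancels the $\{1,3\}$-face of $T^i_{12}$ (both enumerated by $b_i,e$), and the $\{2,3\}$-face of $T^i_{12}$ cancels the $\{2,3\}$-face of $T^i_{02}$ (both enumerated by $c_i,e$); what remains is
\[
\partial\alpha^i \;=\; s_i + (g_{a'}-g_a),
\]
where $g_u$ denotes the $1$-simplex on $\{0,3\}$ whose vertex $0$ is enumerated by $u$ and whose vertex $3$ is $\acl(eA)$. Since the correction $g_{a'}-g_a$ depends only on $a,a',e$ and not on the middle data $b_i,c_i$, it is identical in $\partial\alpha^0$ and $\partial\alpha^1$, so $\partial(\alpha^0-\alpha^1)=s_0-s_1$. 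Hence $s_0-s_1\in B_1(p;A)$ and $[s_0]=[s_1]$ in $H_1(p)$.

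The main obstacle I anticipate is the enumeration bookkeeping dictated by the paper's fixed-enumeration convention: on the one hand, $g_a$ and $g_{a'}$ must be recognised as genuinely distinct $1$-simplices, since their vertex-$0$ enumerations ($a$ versus $a'$) differ even though $\acl(aA)=\acl(a'A)$ as sets, so that the correction $g_{a'}-g_a$ does not prematurely collapse inside a single $\partial\alpha^i$; on the other hand, the $\{1,3\}$- and $\{2,3\}$-faces arising from two different coning simplices must truly coincide (their shared vertex enumerations being $b_i,e$ and $c_i,e$ respectively), so that they do cancel within $\partial\alpha^i$. Checking that both demands are consistent with the paper's convention is the crux of the verification.
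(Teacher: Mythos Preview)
Your approach is correct and follows the same coning-and-cancelling idea as the paper, but you carry it out more economically. The paper cones each shell over \emph{two} auxiliary vertices $b,c$ (placed at indices $3,4$) using four $2$-simplices per shell, so that the residual boundary term is an honest $1$-shell $s$ on $\{0,3,4\}$ represented by $(a,b,c,a')$; since $s$ does not involve $b_k,c_k$, it is the same for $k=0,1$ and drops out of the difference. You cone over a single vertex $e$ using three $2$-simplices per shell, and your residual term is the $1$-chain $g_{a'}-g_a$ on $\{0,3\}$, which likewise depends only on $a,a',e$. Both arguments rest on exactly the same point---the residual does not see the middle data---so neither buys more than the other; yours is just shorter.

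One small correction to your closing paragraph: it is not true that $\acl(aA)=\acl(a'A)$ as sets in general. The reason $g_a$ and $g_{a'}$ share a common vertex $0$ is rather that both arise as faces of $2$-simplices whose $\{0\}$-vertex is forced to equal the $0$-simplex $f^i_{01}(\{0\})=f^i_{02}(\{0\})$ of the shell $s_i$; what distinguishes $g_a$ from $g_{a'}$ is that the morphism from this common vertex into the $\{0,3\}$-edge has image $\acl(aA)$ in one case and $\acl(a'A)$ in the other. (This also shows where the real bookkeeping lies: for $g_a$ coming from $T^0_{01}$ to equal $g_a$ coming from $T^1_{01}$ you need the two shells to share the same $0$-simplex at vertex $0$. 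The paper's proof tacitly uses the same assumption; it is justified by the remark just before the definition of representation that type-equivalent shells are homologous, via $H_1(p)\cong H_1^t(p)$.)
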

\begin{proof}
We assume that $A=\emptyset$. Consdider two $1$-shelss $s_k=f_{12}^k-f_{02}^k+f_{01}^k$ for $k=0,1$. Suppose $s_0$ and $s_1$ have representations $(a,b_0,c_0,a')$ and $(a,b_1,c_1,a')$ respectively. Take two independent elements $b,c\models p$ such that $bc\indo ab_0b_1c_0c_1a'$ and consider a $1$-shell $s$ of its support $\{0,3,4\}$ which is represented by $(a,b,c,a')$. Then there is a $2$-chain $\alpha=(a^0_{01}+a^0_{12}-b^0-a^0_{02})-(a^1_{01}+a^1_{12}-b^1-a^1_{02})$ where for each $k=0,1$ and $0\le i<j\le 2$, $a^k_{ij}$ and $b^k$ are $2$-simplices satisfying the followings : 

\be
	\item If $j-i=1$, $\supp(a^k_{ij})=\{i,j,3\}$, otherwise, $\supp(a^k_{02})=\{0,2,4\}$. And $\supp(b^k)=\{2,3,4\}$;
	\item $a^k_{ij}\upharpoonright \CP(\{i,j\})=f^k_{ij}$; and
	\item $a^k_{01}(\{0,1,3\})=[a b_k b]_{\{0,1,3\}}$, $a^k_{12}(\{1,2,3\})=[b_k c_k b]_{\{1,2,3\}}$, $a^k_{02}(\{0,2,4\})=[a'c_k c]_{\{0,2,4\}}$; and $b^k(\{2,3,4\})=[c_k bc]_{\{2,3,4\}}$.
\ee

\noindent and $\Bd(a^k_{01}+a^k_{12}-b^k-a^k_{02})=s_k-s$. So $\Bd\alpha=(s_0-s)-(s_1-s)=s_0-s_1$, and $s_0$ and $s_1$ are in the same homology class. 
\end{proof}

\begin{theorem}\label{endpt_gpstr}
Let $s_0$ and $s_1$ be $1$-shells of a support $\{0,1,2\}$, and let $a,a',a''\models p$ be such that $(a,a')$ and $(a',a'')$ are endpoint pairs of representations of $s_0$ and $s_1$ respectively. Then there is a $1$-shell $s$ of a support $\{0,1,2\}$ having an endpoint of representation $(a,a'')$ and $[s]=[s_0]+[s_1]$ in $H_1(p)$.
\end{theorem}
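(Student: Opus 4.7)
The plan is to apply Theorem \ref{endpt_class} to standardize $s_0$ and $s_1$ to shells with fresh, mutually independent middle elements, then to exhibit an explicit $2$-chain whose boundary is $s_0' + s_1' - s$ for a suitable shell $s$ on support $\{0, 1, 2\}$ with endpoint pair $(a, a'')$.

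First, choose $b, c, b', c', e \models p$ such that $\{b, c, b', c', e\}$ is independent over $A$ and $\{b, c, b', c', e\} \indo_A \{a, a', a''\}$. By Theorem \ref{endpt_class}, $s_0$ is homologous to the $1$-shell $s_0'$ on $\{0, 1, 2\}$ with representation $(a, b, c, a')$, and $s_1$ is homologous to the $1$-shell $s_1'$ on $\{0, 1, 2\}$ with representation $(a', b', c', a'')$. Let $s$ be the $1$-shell on $\{0, 1, 2\}$ with representation $(a, b, c', a'')$, a valid $1$-shell because $\{a, b\}$, $\{b, c'\}$, and $\{a'', c'\}$ are each independent over $A$; its endpoint pair is $(a, a'')$.

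Next, let $h$ be the $2$-simplex on support $\{0, 1, 2\}$ with representation $[a', b', c]$, and let $h_1, h_2, h_3, h_4$ be the $2$-simplices on support $\{1, 2, 3\}$ with representations $[b, c, e]$, $[b', c', e]$, $[b, c', e]$, $[b', c, e]$ respectively; all five exist by the chosen independence. Set $\alpha := h + h_1 + h_2 - h_3 - h_4$. A direct computation of $\partial \alpha$ shows that the $\{1,3\}$- and $\{2,3\}$-contributions involving $e$ cancel in pairs, the $[b'c]_{12}$ term from $\partial h$ cancels against the $-[b'c]_{12}$ in $-\partial h_4$, and what remains is $[bc]_{12} + [b'c']_{12} - [bc']_{12} + [a'b']_{01} - [a'c]_{02} = s_0' + s_1' - s$. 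Hence $[s_0'] + [s_1'] = [s]$ in $H_1(p)$, and combining with $[s_0] = [s_0']$ and $[s_1] = [s_1']$ yields $[s_0] + [s_1] = [s]$.

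The main subtlety is the design of the auxiliary part $h_1 + h_2 - h_3 - h_4$: since $s_0', s_1', s$ collectively involve the four middle elements $b, c, b', c'$ but no single $2$-simplex relates them, I introduce an auxiliary element $e$ at vertex $3$ and sum the four $2$-simplices obtained by placing each of $\{b, b'\}$ at vertex $1$ and each of $\{c, c'\}$ at vertex $2$, with alternating signs. This produces precisely the antisymmetric $1$-chain $[bc]_{12} + [b'c']_{12} - [bc']_{12} - [b'c]_{12}$ on support $\{1, 2\}$ while all $e$-containing faces cancel; the simplex $h$ then accounts for the remaining endpoint-face terms $-[a'c]_{02} + [a'b']_{01}$.
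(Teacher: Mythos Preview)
Your argument is correct and takes a route genuinely different from the paper's. The paper keeps the original representations $(a,b_0,c_0,a')$ and $(a',b_1,c_1,a'')$ of $s_0,s_1$, introduces two fresh generic elements $d,e$ at new vertices $3,4$, and builds a $7$-term $2$-chain (three cones of $s_0$ over $d$, three cones of $s_1$ over $e$, and a linking simplex $b$ on $\{0,3,4\}$) whose boundary is $s_0+s_1-s'$ for a shell $s'$ supported on $\{0,3,4\}$; only afterwards does it invoke Theorem~\ref{endpt_class} to transport $s'$ back to support $\{0,1,2\}$. You instead invoke Theorem~\ref{endpt_class} \emph{first} to replace $s_0,s_1$ by shells $s_0',s_1'$ whose middle elements $b,c,b',c'$ are mutually generic, and then exhibit a $5$-term $2$-chain---one simplex $h=[a'b'c]$ on $\{0,1,2\}$ plus the alternating square $h_1+h_2-h_3-h_4$ on $\{1,2,3\}$---whose boundary is $s_0'+s_1'-s$ with $s$ already on $\{0,1,2\}$. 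Your antisymmetrized sum over $\{b,b'\}\times\{c,c'\}$ is a neat device for cancelling all $e$-faces while producing exactly the needed $\{1,2\}$-terms; it yields a shorter chain and avoids the detour through an auxiliary support.

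One point worth making explicit when you write this up: the cancellations require that corresponding faces of different $2$-simplices are \emph{equal as $1$-simplices}, not merely of the same type. Since you have full freedom in choosing $s_0',s_1',s$ (Theorem~\ref{endpt_class} only fixes their endpoint pairs), you should construct the $h_i$ first via strong $2$-amalgamation and then take the $\{1,2\}$-faces of $h_1,h_2,h_3,h_4,h$ as the $\{1,2\}$-components of $s_0',s_1',s$ and the $[b'c]$-edge respectively; the paper handles the analogous issue in its own proof by explicitly stipulating $a^k_{ij}\upharpoonright\CP(\{i,j\})=f^k_{ij}$.
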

\begin{proof}
Assume $A=\emptyset$. Consider two $1$-shells of a support $\{0,1,2\}$, $s_0=f^0_{01}+f^0_{12}-f^0_{02}, s_1=f^1_{01}+f^1_{12}-f^1_{02}$. Suppose there are representations of $s_0,s_1$ such that the terminal point of one of $s_0$ and the initial point of one of $s_1$ are same. Let $a'\models p$ be the common element and $a,a''\models p$ be elements so that $(a,a')$ and $(a',a'')$ are endpoint pairs of $s_0$ and $s_1$ respectively. Let $b_0, b_1, c_0, c_1\models p$ be elements such that two quadruples $(a,b_0,c_0,a')$ and $(a',b_1,c_1,a'')$ are representations of $s_0$ and $s_1$ respectively. Consider two independent elements $d,e\models p$ with $de\indo aa'a''b_0 b_1 c_0 c_1$. Then there is a $2$-chain $\alpha=(a^0_{01}+a^0_{12}-a^0_{02})-b+(a^1_{01}+a^1_{12}-a^1_{02})$, where for $k=0,1$ and $0\le i<j\le 2$, $a^k_{ij}$ and $b$ are $2$-simplices satisfying the followings :
\be
	\item $\supp(a^k_{ij})=\{i,j,3+k\}$ and $\supp(b)=\{0,3,4\}$;
	\item $a^k_{ij}\upharpoonright \CP(\{i,j\})=f^k_{ij}$;
	\item $a^0_{01}(\{0,1,3\})=[a,b_0,d]_{\{0,1,3\}}$, $a^0_{12}(\{1,2,3\})=[b_0,c_0,d]_{\{0,2,3\}}$, $a^0_{02}(\{0,2,3\})=[a',c_0,d]_{\{0,2,3\}}$, $a^1_{01}(\{0,1,4\})=[a',b_1,e]_{\{0,1,4\}}$, $a^1_{12}(\{1,2,4\})=[b_1,c_1,e]_{\{1,2,4\}}$, $a^1_{02}(\{0,2,4\})=[a'',c_1,e]_{\{0,2,4\}}$, and $b(\{0,3,4\})=[a',d,e]_{\{0,3,4\}}$.
\ee
\noindent and $\Bd(a)=s_0+s_1-s'$, where $s'=-a^0_{01}\upharpoonright \CP(\{0,3\})+b\upharpoonright \CP(\{3,4\})+a^1_{12}\upharpoonright \CP(\{0,4\})$ is a $1$-shell of a support $\{0,3,4\}$. Using the proof of Theorem \ref{endpt_class}, we get a $1$-shell $s$ of a support $\{0,1,2\}$ having an endpoint $(a,a'')$ and $[s]=[s']$ in $H_1(p)$. Thus, there is a 2-chain $\alpha'$ having a $1$-chain $s_0+s_1-s$ as its boundary and so $[s]=[s_0]+[s_1]$.
\end{proof}
Next we consider an action of $\aut_A(\CC)$ on each $C_n(p)$ and this action induces an action of $\aut_A (\CC)$ on $H_n(p)$. From the theorem \ref{endpt_class}, this action becomes trivial on $H_n(p)$. But this triviality is very crucial in finding a connection between the Lascar group over $A$ and the first homology group in $p$.
\begin{def/rem}
We define an action of $\aut_A(\CC)$ on each $C_n(p)$. Let $\sigma\in \aut(\CC)$. For a $n$-chain $c=\sum_{i=0}^k\limits n_i f_i$, we define $$\sigma(c):=\sum_{i=0}^k\limits n_i \sigma(f_i),$$
where for a $n$-simplex $f:\CP(s)\rightarrow C_A$ with $s=\{s_0<s_1<\cdots<s_n\}$, a $n$-simplex $\sigma(f)$ is defined as follows :
\be

	\item $\sigma(f)(u):=\sigma(f(u))$ for each $u\subset s$; and
	\item $\sigma(f)(\iota_{u,v}):=\sigma\circ f(\iota_{u,v})\circ \sigma^{-1}$ for each inclusion map $\iota_{u,v}$.

\ee
Furthermore, this action commutes with $\Bd$, i.e.,
$$\Bd(\sigma(c))=\sigma(\Bd(c)).$$
So this action induces an action of $\aut_A(\CC)$ on $H_1(p)$ as follows : for each $[s]\in H_1(p)$, $\sigma([s]):=[\sigma(s)]$.
\end{def/rem}
\begin{note}
Let $s$ be a $1$-shell in $p$ and let $(a,b)$ be an endpoint pair of $s$. For each $\sigma\in\aut_A(\CC)$, $(\sigma(a),\sigma(b))$ is an endpoint pair of $\sigma(s)$.
\end{note}
\noindent Since the $n$-th type-homology group and the $n$-th homology group in $p$ are isomorphic, the action of $\aut_A(\CC)$ on $H_1(p)$ is trivial.
\begin{corollary}\label{auto_action_triviality}
Let $s$ be a $1$-shell and let $\sigma \in \aut_A(\CC)$. Then there is a $2$-chain $\alpha$ having the boundary of $s-\sigma(s)$.
\end{corollary}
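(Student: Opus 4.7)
The plan is to deduce the corollary from the isomorphism $H_1(p;A)\cong H_1^t(p;A)$ alluded to in the exposition, observing that the automorphism action is trivial on the type-side for purely tautological reasons. The statement to prove, namely the existence of a $2$-chain $\alpha$ with $\Bd\alpha = s - \sigma(s)$, is equivalent to $[s]=[\sigma(s)]$ in $H_1(p)$, so it suffices to show the induced action of $\aut_A(\CC)$ on $H_1(p)$ is trivial.

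First I would make explicit the canonical chain map $\pi\colon C_n(p;A)\to C_n^t(p;A)$ that realizes (one direction of) the isomorphism: to a simplex $f\colon\CP(s)\to\CS_A$ one assigns the type-simplex $\pi(f)\colon\CP(s)\to\CT_A$ where $\pi(f)(u)$ is the complete type over $A$ of the fixed enumeration of $f(u)$, and $\pi(f)(\iota_{u,v})$ is the induced inclusion of variable sets coming from the elementary map $f^u_v$. I would verify that $\pi$ is indeed a chain map (commutes with $\Bd$) and that it descends to the asserted isomorphism on homology — here I would simply cite the paper's statement that $H_n(p;A)\cong H_n^t(p;A)$.

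Next I would check the key equivariance: for every $\sigma\in\aut_A(\CC)$ and every simplex $f$, one has $\pi(\sigma(f))=\pi(f)$. This is immediate from two facts: $\sigma$ fixes $A$ pointwise so $f(u)\equiv_A \sigma(f(u))$, and the enumerations of algebraically closed sets are fixed in a way compatible with $\equiv_A$-equivalence (the paper's convention that $a\equiv_A b$ iff $\acl(aA)\equiv\acl(bA)$ with their chosen enumerations). Consequently, for any chain $c\in C_n(p;A)$ we have $\pi(\sigma(c))=\pi(c)$, so $\pi(s-\sigma(s))=0$ in $C_1^t(p;A)$, and hence $[s-\sigma(s)]=0$ in $H_1^t(p;A)$. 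Transporting across the isomorphism gives $[s]=[\sigma(s)]$ in $H_1(p;A)$, which is exactly the conclusion.

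The only genuine subtlety to address will be the enumeration issue in Step~2: one must be sure that the chain map $\pi$ is well-defined on the nose (not just up to isomorphism of type-simplices) and that it is literally invariant under $\sigma$. This reduces to the coherence of the chosen enumerations of algebraic closures with respect to $A$-elementary bijections, which is guaranteed by the standing convention on enumerations. Once that bookkeeping is in place, the rest of the argument is formal.
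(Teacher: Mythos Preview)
Your proposal is correct and is essentially the paper's own argument: the paper gives no separate proof of the corollary, but deduces it in one line from the remark immediately preceding it, namely that $H_1(p)\cong H_1^t(p)$ forces the $\aut_A(\CC)$-action on $H_1(p)$ to be trivial. Your write-up simply unpacks this sentence by exhibiting the chain map $\pi$ and checking $\pi(\sigma(f))=\pi(f)$; the enumeration subtlety you flag is exactly the bookkeeping the paper sweeps under the standing convention on enumerations of $\acl(aA)$.
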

\noindent We denote the ordered bracket $[a,b]$ for the class of $1$-shell $s$ in $H_1(p)$ which has an endpoint pair $(a,b)$ for $a,b\models p$. By Theorem \ref{endpt_class}, this bracket notion is well-defined. We can summarize Theorems \ref{endpt_class}, \ref{endpt_gpstr}, and Corollary \ref{auto_action_triviality} as follows : For $a,b,c\in p(\CC)$ and $\sigma\in\aut_A(\CC)$, in $H_1(p)$,
\be

	\item $[a,b]+[b,c]=[a,c]$;
	\item $[a,a]$ is the identity element;
	\item $-[a,b]=[b,a]$; and
	\item $\sigma([a,b])=[\sigma(a),\sigma(b)]=[a,b]$.

\ee
\subsection{Lascar group and the first homology groups}
Here, using the ordered bracket notion of endpoint pairs, we define a map $\psi_a$ from the automorphism group over $A$ into the first homology group in $p$ for each $a\models p$. This map is proven to be a surjective homomorphism(or epimorphism) and this map does not depending on the choice of $a\models p$. Thus we get a canonical epimorphism from $\aut_A(\CC)$ into $H_1(p)$ and we study about its kernel.

For each $a\models p$, we define a map $\psi_a$ from $\aut_A(\CC)$ to $H_1(p)$ by sending $\sigma$ into $[a,\sigma(a)]$.
\begin{theorem}\label{canonical_epi}
\be
	\item Each $\psi_a$ is a epimorphism;
	\item For $a,b\models p$, $\psi_a=\psi_b$. So we get a canonical map $\psi$ from $\aut_A(\CC)$ into $H_1(p)$.
\ee
\end{theorem}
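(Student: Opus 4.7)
The plan is to reduce both parts of the theorem to the bracket algebra established in Theorems \ref{endpt_class}, \ref{endpt_gpstr}, and Corollary \ref{auto_action_triviality}, namely the identities
\[ [a,b]+[b,c]=[a,c],\quad [a,a]=0,\quad -[a,b]=[b,a],\quad \sigma([a,b])=[a,b] \]
for $\sigma\in\aut_A(\CC)$. These four rules do essentially all the work; no step below requires a genuinely new construction.

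For part (1), I would first verify that $\psi_a$ is a group homomorphism. Given $\sigma,\tau\in\aut_A(\CC)$, the composition rule gives
\[ [a,\sigma\tau(a)]=[a,\sigma(a)]+[\sigma(a),\sigma\tau(a)], \]
and since the induced $\aut_A(\CC)$-action on $H_1(p)$ satisfies $\sigma([x,y])=[\sigma(x),\sigma(y)]$, the second summand equals $\sigma([a,\tau(a)])$. Applying the triviality of the action (Corollary \ref{auto_action_triviality}), this reduces to $[a,\tau(a)]$, yielding $\psi_a(\sigma\tau)=\psi_a(\sigma)+\psi_a(\tau)$. Surjectivity then follows by combining Fact \ref{funfact} (whose $2$-amalgamation hypothesis is guaranteed by the strong $2$-amalgamation from the extension axiom) with Theorem \ref{endpt_class}: every class in $H_1(p)$ is an integer combination of brackets $[b,c]$ with $b,c\models p$. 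Since all realizations of $p$ are $\aut_A(\CC)$-conjugate, I can pick $\sigma,\tau\in\aut_A(\CC)$ with $\sigma(a)=b$ and $\tau(a)=c$, whence
\[ [b,c]=[b,a]+[a,c]=-\psi_a(\sigma)+\psi_a(\tau), \]
which lies in the image of $\psi_a$ by the homomorphism property just verified.

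For part (2), fix $a,b\models p$ and $\sigma\in\aut_A(\CC)$, and compute $[a,\sigma(b)]\in H_1(p)$ in two ways using the composition identity: as $[a,\sigma(a)]+[\sigma(a),\sigma(b)]$ and as $[a,b]+[b,\sigma(b)]$. Invariance yields $[\sigma(a),\sigma(b)]=\sigma([a,b])=[a,b]$, so cancelling gives $[a,\sigma(a)]=[b,\sigma(b)]$, i.e.\ $\psi_a(\sigma)=\psi_b(\sigma)$, as required. The only mild point of care across the whole argument is confirming that the generating hypothesis of Fact \ref{funfact} is in force for the surjectivity step, which is automatic because strong $2$-amalgamation always holds in this setting; everything else is purely formal manipulation of the bracket symbols.
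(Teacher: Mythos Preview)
Your proof is correct and follows essentially the same approach as the paper: the homomorphism computation in part (1) is identical, and your surjectivity argument makes explicit the appeal to Fact \ref{funfact} that the paper leaves implicit. For part (2) the paper instead writes $\psi_b(\sigma)=\psi_a(\tau^{-1}\sigma\tau)$ for $\tau$ with $\tau(a)=b$ and then expands via the homomorphism property and abelianness, whereas you compute $[a,\sigma(b)]$ two ways; both are equivalent one-line manipulations of the same bracket identities.
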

\begin{proof}
(1) Fix $a\models p$. At first, surjectivity of $\psi_a$ comes from the fact that for $b\models p$, there is a $\sigma\in \aut_A(\CC)$ such that $\sigma(a)=b$. It is enough to show that $\psi_a$ is a homomorphism. For $\sigma,\tau\in \aut_A(\CC)$,
$$\begin{array}{c c l}
\psi(\sigma\tau)&=&[a,\sigma\tau(a)]\\
&=&[a,\sigma(a)]+[\sigma(a),\sigma\tau(a)]\\
&=&[a,\sigma(a)]+\sigma[a,\tau(a)]\\
&=&[a,\sigma(a)]+[a,\tau(a)]\\
&=&\psi_a(\sigma)+\psi_a(\tau).
\end{array}$$
So $\psi_a$ is a homomorphism.\\

\smallskip

(2) Choose $a,b\models p$. Then there is $\tau\in \aut_A(\CC)$ such that $b=\tau(a)$. For $\sigma\in\aut(\CC)$,
$$\begin{array}{c c l}
\psi_b(\sigma)&=&\psi_a(\tau^{-1}\sigma\tau)\\
&=&\psi_a(\tau^{-1})+\psi_a(\sigma)+\psi_a(\tau)\\
&=&\psi_a(\sigma).
\end{array}$$
Thus $\psi_a=\psi_b$ and we get a canonical epimorphism $\psi(=\psi_a):\ \aut_A(\CC)\rightarrow H_1(p)$ for some $a\models p$.
\end{proof}
\noindent So $H_1(p)$ is isomorphic to $\aut_A(\CC)/\ker(\psi)$ and it is need to understand the kernel of $\psi$. In \cite{KKL}, it was shown that if $p$ is a Lascar strong type, then the first homology group is zero. This fact can be restated using endpoint notion as follows :
\begin{fact}\label{Lascar_zero}{\cite{KKL}}
Let $a,b\models p$ be such that $a\equiv_A^L b$. Then any $1$-shell having endpoint pair $(a,b)$ is a boundary of a $2$-chain, i.e., $[a,b]=0$ in $H_1(p)$.
\end{fact}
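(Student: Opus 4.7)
The plan is to reduce to a single model via additivity of brackets, then construct an explicit bounding $2$-chain in the base case. By the classical characterization recalled in the introduction, $a \equiv_A^L b$ yields a finite chain $a = a_0, a_1, \ldots, a_n = b$ of realizations of $p$ with $a_i \equiv_{M_i} a_{i+1}$ for some model $M_i \supseteq A$ at each step. Iterating Theorem \ref{endpt_gpstr} in $H_1(p)$ gives
\[
 [a,b] = [a_0, a_1] + [a_1, a_2] + \cdots + [a_{n-1}, a_n],
\]
so it suffices to prove $[a,b]=0$ when $a \equiv_M b$ for a single model $M \supseteq A$.

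For the base case, I would fix $\sigma \in \aut_M(\CC)$ with $\sigma(a)=b$, and choose realizations $c_1, c_2$ of $p$ with $c_1 c_2 \indo_M ab$ and $c_1 \indo_M c_2$. The triples $(a, c_1, c_2)$ and $(b, c_1, c_2) = (\sigma(a), c_1, c_2)$ are then $\indo$-independent over $A$ and yield $2$-simplices $g_a$ and $g_b$ sharing the face $\{c_1, c_2\}$. The $2$-chain $g_a - g_b$ has boundary a $1$-cycle of four faces (involving $\{a, c_1\}, \{a, c_2\}, \{b, c_1\}, \{b, c_2\}$), which is not yet a $1$-shell. To convert this into a $1$-shell with endpoint pair $(a, b)$, I would interpolate further $2$-simplices obtained by iterating $\sigma$ and introducing fresh $M$-independent realizations of $p$, assembling the whole into a chain-walk in the sense of Definition \ref{chainwalk}; Fact \ref{supp=3}, read in reverse, guides the combinatorics of the gluing so that the final boundary telescopes to a single $1$-shell of endpoint pair $(a, b)$.

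The principal obstacle is the absence of the independence theorem over models in a general rosy theory: one cannot directly amalgamate the pairs $(a, c_i)$ and $(b, c_i)$ into a single $2$-simplex containing both $a$ and $b$, because $a \indo_A b$ typically fails. The chain-walk construction substitutes for amalgamation by bridging the $a$-faces and $b$-faces through intermediate vertices produced by $\sigma$ and auxiliary realizations of $p$; the delicate technical point is to verify that this bridging can be carried out in finitely many steps and closed up into a chain whose boundary is honestly a $1$-shell, not merely a $1$-cycle. The hypothesis that $M$ is a model (rather than an arbitrary set containing $A$) is essential here, providing the rich supply of $M$-automorphisms and $M$-independent $p$-realizations required to complete the bridging.
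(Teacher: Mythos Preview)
The paper does not supply its own proof of this statement; it is quoted from \cite{KKL} (where it appears as the main theorem, restated here as Fact~\ref{h1=0}), so there is no in-paper argument to compare against. Your reduction to the single-model case via the additivity in Theorem~\ref{endpt_gpstr} is correct and isolates the right base case.

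The base case, however, is where all the content lies, and your treatment of it is only a sketch with an acknowledged gap. Two concrete problems. First, a technical slip: you choose $c_1c_2 \indo_M ab$ and $c_1 \indo_M c_2$ and then assert that $\{a,c_1,c_2\}$ is independent \emph{over $A$}. Independence over $M$ does not descend to independence over $A$ without also arranging $c_1c_2 \indo_A M$; this is fixable, but it must be said. Second, and more seriously, the ``interpolation'' is not carried out. The boundary of $g_a-g_b$ is a four-term $1$-cycle, not a $1$-shell, and you do not name the additional $2$-simplices that would close it up to a $1$-shell with endpoint pair $(a,b)$. Your appeal to Fact~\ref{supp=3} ``read in reverse'' is misplaced: that fact takes an existing $2$-chain with $1$-shell boundary and simplifies it to chain-walk form; it gives no recipe for manufacturing such a $2$-chain when none is yet in hand. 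The obstacle you correctly name---no independence theorem over models in rosy theories---is precisely what prevents the naive three-simplex filling from working, and the argument in \cite{KKL} overcomes it by an explicit finite construction, not by an abstract bridging principle. As written, your base case is a statement of intent rather than a proof.
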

\begin{definition}
\be
	\item Let $\aut_B(\CC)$ be the set of elements $\sigma\in \aut(\CC)$ fixing $B$ pointwise.
	
	\item For a group $G$, the commutator of $G$ is the subgroup of $G$ generated by $\{ghg^{-1}h^{-1}|\ g,h\in G\}$, denoted by $[G,G]$ and this is the smallest normal subgroup between normal subgroups $N$ of $G$ making $G/N$ abelian.
\ee
\end{definition}
\begin{theorem}\label{kernel_canonical_epi}
Let $N$ be the normal subgroup of $\aut_A(\CC)$ generated by automorphisms in $\autf_A(\CC)$, or in $\aut_{\acl(Aa)}$ for some $a\models p$, let $G=\aut_A(\CC)/N$, and consider the canonical quotient map $\Psi':\aut_A(\CC)\rightarrow G$. Let $\psi:\aut_A(\CC)\rightarrow H_1(p)$ be the canonical map. Then the kernel of $\psi$ contains the followings :
\be
	\item $\aut_{\acl(Aa)}(\CC)$ for each $a\models p$;
	\item $\autf_A(\CC)$; and
	\item $(\Psi')^{-1}([G,G])$.	
\ee
Specially, from the second one, $\psi$ induces a canonical epimorphism $\Psi$ from $\gal_L(\CC;A)$ into $H_1(p)$. 
\end{theorem}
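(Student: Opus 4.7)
The plan is to verify the three containments (1)–(3) independently, then to read off the induced map from $\gal_L(\CC;A)$ from containment (2). The essential prior inputs are Theorem \ref{canonical_epi}(2), which guarantees that $\psi_a$ is independent of the choice of $a\models p$, and Fact \ref{Lascar_zero}, which supplies the vanishing of $[a,b]$ whenever $a\equiv_A^L b$.

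For (1), take $\sigma\in\aut_{\acl(Aa)}(\CC)$ and exploit the freedom to shift the base point: by Theorem \ref{canonical_epi}(2),
\[
\psi(\sigma)=\psi_a(\sigma)=[a,\sigma(a)].
\]
Since $a\in\acl(Aa)$, $\sigma$ fixes $a$, so $[a,\sigma(a)]=[a,a]=0$. For (2), given $\sigma\in\autf_A(\CC)$, pick any $a\models p$; by definition $\sigma$ fixes pointwise some small submodel $M\supseteq A$, so $a\equiv_M\sigma(a)$ and hence $a\equiv_A^L\sigma(a)$. Fact \ref{Lascar_zero} then yields $\psi(\sigma)=[a,\sigma(a)]=0$.

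For (3), the combination of (1) and (2) puts every generator of $N$ inside $\ker(\psi)$. Since $\ker(\psi)$ is automatically a normal subgroup of $\aut_A(\CC)$, and $N$ is by definition the smallest normal subgroup containing those generators, we obtain $N\subseteq\ker(\psi)$. Consequently $\psi$ descends to a homomorphism $\bar\psi\colon G\to H_1(p)$ with $\psi=\bar\psi\circ\Psi'$. Because $H_1(p)$ is abelian, $\bar\psi$ annihilates $[G,G]$, whence $(\Psi')^{-1}([G,G])\subseteq\ker(\psi)$, as claimed. For the final assertion, containment (2) lets $\psi$ factor through $\aut_A(\CC)/\autf_A(\CC)=\gal_L(\CC;A)$; surjectivity of $\psi$ from Theorem \ref{canonical_epi}(1) then upgrades this factorisation to a canonical epimorphism $\Psi\colon\gal_L(\CC;A)\to H_1(p)$.

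I do not anticipate a genuine obstacle; the argument is formal once Theorem \ref{canonical_epi} and Fact \ref{Lascar_zero} are available. The two points to watch are the base-point switch in (1)—which is precisely what makes the independence of $\psi_a$ from $a$ indispensable rather than merely cosmetic—and the passage from generators of $N$ to all of $N$ in (3), which is painless only because $\ker(\psi)$ is normal and $N$ is defined to be the smallest such normal subgroup.
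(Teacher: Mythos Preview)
Your proof is correct and follows essentially the same approach as the paper, which dispatches each item with the same ingredients (the identity $[a,a]=0$, Fact~\ref{Lascar_zero}, and abelianness of $H_1(p)$) but in much terser form. One minor slip to tidy in (2): an arbitrary $\sigma\in\autf_A(\CC)$ need not itself fix a model pointwise---$\autf_A(\CC)$ is only \emph{generated} by such automorphisms---but your conclusion $a\equiv_A^L\sigma(a)$ is still immediate since Lascar types are by definition the $\autf_A(\CC)$-orbits.
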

\begin{proof}
(1) For any $a\models p$, $[a,a]$ is the identity element in $H_1(p)$, and since we fix an enumeration of $\acl(aA)$, $\aut_{\acl(Aa)}(\CC)$ is contained the kernel of $\psi$.\\

\smallskip

(2) It comes from Fact \ref{Lascar_zero}\\

\smallskip

(3) It comes from the fact that $H_1(p)$ is always abelian.
\end{proof}
We define an $A$-invariant equivalence relation on $p(\CC)^2$, and using this equivalence relation we describe the kernel of $\psi$. By Fact \ref{supp=3}, we can describe $1$-shells which are boundary of $2$-chains as follows :
\begin{theorem}\label{chainwalk_representation}
A $1$-shell $s$ is a boundary of $2$-chain if and only if there is a representation $(a,b,c,a')$ such that for some $n$ there is a finite sequence $(d_i)_{0\le i\le 2n+1}$ of elements in $p(\CC)$ satisfying the following conditions :
\be
	\item $d_0=a,d_{2n+1}=c$ and $d_{2i_0}=a'$ for some $0<i_0 \le n$;
	\item $\{d_{2i},d_{2i+1},b\}$ is independent for each $0\le i\le n$; and
	\item There is a bijection $m$ from $\{0,2,\cdots,2n\}\setminus \{2i_0\}$ to $\{1,3,\cdots,2n-1\}$ such that $d_{2i} d_{2i+1}\equiv d_{m(2i)+1}d_{m(2i)}$ for $0\le i\neq i_0 \le n$, and $d_{2i_0}d_{2i_0 +1}\equiv a'c$.
\ee
\end{theorem}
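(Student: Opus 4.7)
The plan is to translate between the chain-walk form of Fact \ref{supp=3} and the combinatorial data $(d_i)$ of the theorem, in both directions.

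Forward direction: Suppose $s = \Bd\alpha$ for some $2$-chain $\alpha$. By Fact \ref{supp=3} we may assume $\alpha = \sum_{i=0}^{2n}(-1)^i b_i$ is a chain-walk of support $\{0,1,2\}$ whose boundary is $s$. The chain-walk sharing conditions (cancellation of the $\{0,k_{i+1}\}$-face between consecutive $b_i$ and $b_{i+1}$) force all the $b_i$'s to share a single ``axis'' vertex, while each $b_i$'s other two vertices arrange themselves in an alternating shared-or-free pattern; listing these off-axis vertices in traversal order gives a sequence of length $2n+2$. The boundary identity on the two axis edges contributes the endpoint faces $f_{01}$ and $f_{02}$, which pin down the representation labels $a$ and $c$ and force an axis-relabeling $a'$ at some index $2i_0$. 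The boundary identity on the remaining (non-axis) edge then says that the alternating sum of the non-axis faces equals $f_{12}$; since $f_{12}$ is a single $1$-simplex, exactly one non-axis face (at index $2i_0$) must be a copy of $f_{12}$, and the remaining non-axis faces pair up and cancel via a bijection between the even- and odd-indexed summands. Labelling the off-axis vertices as $(d_i)$ and choosing the representation accordingly, conditions (1)--(3) all follow: (1) by the labelling, (2) because each triple $\{d_{2i},d_{2i+1},b\}$ is the vertex set of some $b_j$ and hence independent by the simplex condition, and (3) by the non-axis cancellation pairing.

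Backward direction: Given $(a,b,c,a')$ and $(d_i)$ satisfying (1)--(3), we iteratively realize $2$-simplices on the independent triples $\{d_{2i},d_{2i+1},b\}$ using condition (2) and extension for $\indo$. The matching bijection $m$ from condition (3) supplies $A$-elementary maps identifying the matched non-axis faces, and we glue these pieces into a chain-walk via crossing and renaming-support operations as in \cite{KKL}\cite{KL}. The special equivalence $d_{2i_0}d_{2i_0+1}\equiv a'c$ produces the unpaired non-axis face as a copy of $f_{12}$, and the endpoint conditions $d_0 = a$, $d_{2i_0}=a'$, $d_{2n+1} = c$ ensure that the walk's axis faces at its two ends realize $f_{01}$ and $f_{02}$. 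The boundary of the assembled chain-walk is then $s$, modulo an $A$-conjugation which is absorbed by Theorem \ref{endpt_class}.

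The main obstacle is reconciling the enumeration-sensitive equalities of simplex functors (simplices with matching vertex types need not be literally equal since the enumeration of $\acl(xyA)$ is not fixed) with the type-level equivalences ``$\equiv$'' of the theorem; in the backward direction the gluing must be carried out modulo $A$-conjugation, using that $[s]\in H_1(p)$ depends only on $A$-types (Theorem \ref{endpt_class}). In the forward direction the same ambiguity is absorbed by the freedom to choose the representation $(a,b,c,a')$, in particular the dual labelling of the axis-vertex as $a$ at the $f_{01}$-end and $a'$ at the $f_{02}$-end of the walk.
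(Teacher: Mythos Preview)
Your forward direction is exactly the paper's approach: reduce to a chain-walk of support $\{0,1,2\}$ via Fact~\ref{supp=3} and then read off the representation and the sequence $(d_i)$ from the vertex data of the walk. In fact the paper's own proof treats \emph{only} this direction, and does so essentially by assertion---it simply states that given such a chain-walk the representation and sequence satisfying (1)--(3) exist, without spelling out why. Your write-up is already more explicit than the original on this half.

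The paper gives no argument for the backward direction at all. Your sketch for it is sound in outline: realize $2$-simplices on the independent triples supplied by condition~(2), use the type-equivalences of condition~(3) to arrange the required cancellations among the $\{1,2\}$-faces, and assemble a chain-walk whose boundary has the prescribed representation; then pass from ``same representation'' to ``same homology class'' via the remark (just before Theorem~\ref{endpt_class}) that $H_1^t(p)\cong H_1(p)$ forces $1$-shells with equal face-types to be homologous. One small caution on terminology: your phrase ``all the $b_i$'s share a single axis vertex'' is a slight oversimplification. In a chain-walk of support $\{0,1,2\}$ the gluing is along faces $\{0,k_{i+1}\}$, so it is the index $0$ that is common to every shared face, whereas the element $b$ of the representation sits at index $1$; these play different roles, and the correspondence between the $d_i$'s and the chain-walk vertices is not a single-axis picture (note there are $2n+1$ simplices but only $n+1$ triples $\{d_{2i},d_{2i+1},b\}$). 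This does not break the argument, but the bookkeeping is a little more delicate than your sketch suggests.
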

\begin{proof}
Let $s=f_{01}+f_{12}-f_{02}$ be a $1$-shell and let $\alpha=\sum_{i=0}^{2n}\limits (-1)^i a_i$ be a $2$-chain with $\supp(\alpha')=\{0,1,2\}$ and $\Bd(\alpha)=s$, which is a chain-walk from $f_{01}$ to $-f_{02}$. Then there are a representation $(a,b,c,a')$ and a finite sequence $(d_i)_{0\le i\le 2n+1}$ of elements in $p(\CC)$ satisfying the following conditions :
\be
	\item $d_0=a,d_{2n+1}=c$ and $d_{2i_0}=a'$ for some $0<i_0 \le n$;
	\item $\{d_{2i},d_{2i+1},b\}$ is independent for each $0\le i\le n$; and
	\item There is a bijection $m$ from $\{0,2,\cdots,2n\}\setminus \{2i_0\}$ to $\{1,3,\cdots,2n-1\}$ such that $d_{2i} d_{2i+1}\equiv d_{m(2i)+1}d_{m(2i)}$ for $0\le i\neq i_0 \le n$, and $d_{2i_0}d_{2i_0 +1}\equiv a'c$.
\ee
\end{proof}
\noindent Now we define an $A$-invariant equivalence relation on $p(\CC)$ representing the kernel of $\psi$. For $m\ge 1$, define a partial type $p^{\odot m}(x_1,\ldots,x_m)$ over $A$ as
$$\bigwedge_{i\le m} p(x_i)\wedge \bigwedge_{j<m} x_i\indo x_{i+1}.$$

\noindent Next define a relation $\sim$ on $p^{\odot 4}(\CC)$ as follows : for $(a_i,b_i,c_i,a'_i)\in p^{\odot 4}(\CC)$ and $i=0,1$, $(a_0,b_0,c_0,a'_0)\sim (a_1,b_1,c_1,a'_1)$ if $a_0b_0\equiv_A a_1b_1$, $b_0c_0\equiv_A b_1c_1$, and $a'_0c_0\equiv_A a'_1c_1$. Note that for $(a_0,b_0,c_0,a'_0)$ and $(a_1,b_1,c_1,a'_1)$ in $p^{\odot 4}(\CC)$, $(a_0,b_0,c_0,a'_0)\sim (a_1,b_1,c_1,a'_1)$ if and only if both quadruples represent a same $1$-shell. This relation is an $A$-type-defianble equivalence relation. And for each $n\ge 0$, the $(1)$, $(2)$ and $(3)$ conditions of $(a,b,c,a')$ and $(d_0,d_1,\ldots,d_{2n+1})$ in Theorem \ref{chainwalk_representation} is $A$-type-definable. We define a partial type $F_n(x,y,z,w;v_0,v_1,\ldots,v_{2n+1})$ as
$$\begin{array}{c l}
&\bigwedge_{0\le i\le 2n+1}\limits p(v_i) \wedge v_0=x \wedge v_{2n+1}=w\\
\wedge & \bigwedge_{0\le j\le n}\limits \{v_{2j},v_{2j+1},z\}\mbox{ is independent over }A\\
\wedge & \bigvee_{0\le i_0\le n}\limits [ d_{2i_0}=y\\
\wedge & \bigvee_m\limits\ (\bigwedge_{0\le i\neq i_0\le n}\limits v_{2i}v_{2i+1}\equiv_A v_{m(2i)+1}v_{m(2i)})\\
&\mbox{for each bijetion } m : \{0,2,\cdots,2n\}\setminus \{2i_0\}\rightarrow \{1,3,\cdots,2n-1\}\\
\wedge & v_{2i_0}v_{2i_0+1}\equiv_A yw].

\end{array}
$$
At last, for each $n\ge 0$, we define a partial type $E'_n(x,y)$ over $A$ as
$$\begin{array}{c l}
&p(x)\wedge p(y)\\
\wedge &\exists zwx'y'z'w'\ [( p^{\odot 4}(x,z,w,y)\wedge p^{\odot 4}(x',z',w',y')\wedge (x,z,w,y)\sim (x',z',w',y'))\\
\wedge & \exists v_0 v_1\ldots v_{2n+1}\ F_n(x',y',z',w';v_0,v_1,\ldots,v_{2n+1})]. 
\end{array}$$
The relation $E'_n(x,y)$ says that $(x,y)$ is an endpoint pair of a $1$-shell which is a boundary of $2$-chain which is a chain-walk of length $2n+1$. Take $E_n(x,y)\equiv E'_n(x,y)\wedge E'_n(y,x)$. So, for each $n\ge 0$, $E_n$ is $A$-type-definable symmetric relation. At last, define the binary relation $E(x,y)$ as $$x=y\vee \bigvee_{n\ge 0}E_n(x,y).$$
This relation is $A$-invariant, reflexive, and symmetric. By Theorem \ref{endpt_gpstr}, it is transitive and by Theorem \ref{chainwalk_representation} $E(a,b)$ if and only if $[a,b]=0$ in $H_1(p)$ for $a,b\models p$. So this relation $E$ is a desired $A$-invariant equivalence relation.\\

\smallskip

Next, we define a distance-like notion on $p(\CC)$ as follows : For $a,b\models p$,
$$
d_E(a,b):=
\begin{cases}
\min\{n|E_n(a,b)\} & \mbox{if } E(a,b)\\
\infty & \mbox{otherwise.}
\end{cases}
$$
This distance-like notion is not necessary to satisfy triangle inequality, i.e., for $a,b,c\models p$, $d_E(a,b)\le d_E(a,c)+d_E(c,b)$. But it does hold that for $a,b,c\models p$, $d_E(a,b)\le d_E(a,c)+d_E(c,b)+8$ since in the proof of Theorem \ref{endpt_gpstr} for two $1$-shells $s_0$ and $s_1$, there is a $1$-shell $s$ such that $s_0+s_1-s$ is a boundary of $2$-chain of length of $15(=2\times 8-1)$. So we can apply the results in \cite{N} and we know that if $E$ is not type-definable, then the cardinality of $H_1(p)$ is at least $2^{\aleph_0}$. And, in the Appendix A, we prove that for any bounded (type-)definable equivalence relation on a strong type, the possible cardinality of the set of equivalence classes in the strong type is one or at least $2^{\aleph_0}$. In \cite{KrT}, an invariant equivalence relation $E$ on a type-definable set $X$ is called {\em orbital equivalence relation} if there is a subgroup $\Gamma$ of $\aut(\CC)$ such that $\Gamma$ preserves classes of $E$ setwise and acts transitively on each class. By Theorem \ref{canonical_epi}, $E(a,b)$ if and only if there is $\sigma\in \ker(\psi)$ such that $\sigma(a)=b$ for $a,b\models p$. So, our equivalence relation $E$ is a orbital equivalence relation.
\begin{theorem}
\be 
	\item $E$ is an orbital equivalence relation.
	\item The cardinality of  $H_1(p)$ is zero or $\ge 2^{\aleph_0}$.
\ee
\end{theorem}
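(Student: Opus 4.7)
The plan is to deduce both statements from what the section has already established: part~(1) comes from the reformulation $E(a,b)\Leftrightarrow[a,b]=0$ combined with $\psi(\sigma)=[a,\sigma(a)]$, and part~(2) is a case analysis on whether $E$ is type-definable, using Appendix~A in one case and the main result of \cite{N} in the other.

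For part~(1), I set $\Gamma:=\ker(\psi)\le\aut_A(\CC)$. Given $a,b\models p$, pick any $\sigma\in\aut_A(\CC)$ with $\sigma(a)=b$; then $\psi(\sigma)=[a,\sigma(a)]=[a,b]$, so $\sigma\in\Gamma$ iff $[a,b]=0$ iff $E(a,b)$. Consequently each $E$-class on $p(\CC)$ is a single $\Gamma$-orbit, which yields both the transitivity of $\Gamma$ on every class and the setwise preservation of every class; this is exactly the definition of an orbital equivalence relation.

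For part~(2), $\psi$ induces a canonical bijection between $p(\CC)/E$ and $H_1(p)$, so it suffices to show the number of $E$-classes on $p(\CC)$ is either $1$ or at least $2^{\aleph_0}$. I split on whether the $A$-invariant relation $E$ is type-definable. In the type-definable case, $E$ is a bounded type-definable equivalence relation on the strong type $p(\CC)$---boundedness holds because $p(\CC)/E$ is in bijection with $H_1(p)$, which by Theorem~\ref{kernel_canonical_epi} is a quotient of the bounded group $\gal_L(\CC;A)$---so Appendix~A delivers the dichotomy directly. In the non-type-definable case, the relation $E=\{(a,a):a\models p\}\cup\bigcup_{n\ge 0}E_n$ is a countable union of $A$-type-definable symmetric reflexive relations, the distance $d_E$ satisfies the quasi-triangle inequality $d_E(a,b)\le d_E(a,c)+d_E(c,b)+8$ extracted from the length-$15$ chain-walk in the proof of Theorem~\ref{endpt_gpstr}, and part~(1) identifies $E$ as orbital; these are exactly the hypotheses of Newelski's theorem in \cite{N}, whose conclusion here is $|p(\CC)/E|\ge 2^{\aleph_0}$.

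The main obstacle is to confirm that the hypotheses in \cite{N} and in Appendix~A apply to the relation $E$ as we have built it---specifically, that the quasi-triangle inequality with the explicit additive constant $8$ is the form of regularity Newelski's argument needs, and that the orbital subgroup $\Gamma$ from part~(1) is the one his proof uses to generate the $2^{\aleph_0}$ many classes. Once those matches are in place, both cases close routinely.
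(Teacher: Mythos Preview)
Your proposal is correct and follows essentially the same route as the paper: part~(1) via $E(a,b)\Leftrightarrow\exists\,\sigma\in\ker(\psi)$ with $\sigma(a)=b$, and part~(2) via the case split on type-definability of $E$, invoking Appendix~A in the type-definable case and Newelski's result \cite{N} (through the quasi-triangle inequality $d_E(a,b)\le d_E(a,c)+d_E(c,b)+8$) in the non-type-definable case. If anything, you have supplied more detail than the paper does---the explicit bijection $p(\CC)/E\cong H_1(p)$ and the boundedness argument via $\gal_L(\CC;A)$ are left implicit there.
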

\noindent Next section, we give two examples and in two examples, we compute two first homology groups, which are non trivial and their cardinalities are exactly $2^{\aleph_0}$.

\section{Examples}
In simple theories including stable theories, the first homology group of a strong type is always zero by $3$-amalgamation. In \cite{GKK}, the first homology groups of strong types were computed for some cases and they were all zero, and they showed that in o-minimal theories, the first homology group of a strong 1-type is always trivial. Here, we give two examples of rosy theories having a non trivial first homology group in a strong type. They are the first cases to give a non trivial first homology group in a strong type.  In \cite{KKL}, B. Kim, S. Kim, and the author considered the structures in \cite{CLPZ}, $\CM_{1,n}=(M;S;g_{1/n})$ for each $n\in \BN\setminus \{0\}$ where
\be 
	\item $M$ is a saturated circle;
	\item $g_{1/n}$ is a rotation (clockwise) by $2\pi/n$-radian; and	
	\item $S$ is a ternary relation such that $S(a,b,c)$ holds if $a,b,c$ are distinct and $b$ comes before $c$ going around the circle clockwise starting at $a$.
	
\ee
and it was shown that the unique strong 1-type $p_n$ in $S_1(\emptyset)$ has the trivial first homology group for every $n$, which is actually a Lascar strong type. Here we consider two structure $\CM_1=(M;S;g_{1/n} : n\in \BN\setminus\{0\})$ expanding the structures $\CM_{1,n}$ by adding all rotation functions of $2\pi/n$-radian for each $n\in \BN\setminus\{0\}$ at the same time. When we write $g_r$ for $r=m/n$ in $\BQ\cap[0,1)$, it means $g_{1/n}^m$, and $\CM_2=(M;U_{<r},U_{=r}|r\in (0,1/2]\cap \BQ)$, where $U_{<r}(x,y)$ says the smallest length between $x$ and $y$ along the arc is less than $2\pi r$, and $U_{<r}(x,y)$ says the smallest length between $x$ and $y$ along the arc is exactly equal to $2\pi r$.

\subsection{Rosiness of $\Th(\CM_1)$ and $\Th(\CM_2)$}In this subsection, we mainly show that two theories of $\CM_1$ and $\CM_2$ are rosy. In \cite{EO}, C. Ealy and A. Onshuus gave a sufficient condition for being a rosy theory.

\begin{Fact}\label{character_rosy}
Any theory $T$ which geometrically eliminates imaginaries and for which algebraic closure defines a pregeometry is rosy of thorn $U$-rank $1$.
\end{Fact}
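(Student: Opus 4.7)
The plan is to produce a candidate independence relation from the pregeometry, verify that it coincides with (or dominates) thorn-independence, and read off the rank computation directly from the fact that a pregeometric dimension on singletons takes only the values $0$ and $1$.

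First, I would define, for tuples $a$ and small sets $B\subseteq C$, the relation
\[
a \indo^*_B C \iff \dim(a/C) = \dim(a/B),
\]
where $\dim$ is the dimension associated to the pregeometry $\acl$. Since $\acl$ is a pregeometry on the home sort, $\dim$ has the standard properties (monotonicity, additivity, exchange), and these translate directly into the verification that $\indo^*$ is a symmetric, monotone, transitive, normal, extension-satisfying ternary relation with finite and local character on real tuples. These are the axioms Ealy–Onshuus isolate for an independence relation that witnesses rosiness, so once they are checked, $T$ restricted to the home sort is rosy and thorn-independence must be at least as fine as $\indo^*$.

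Second, I would lift $\indo^*$ to $T^{\eq}$ using geometric elimination of imaginaries. For an imaginary $e$, GEI produces a real tuple $a$ with $\acl^{\eq}(e)=\acl^{\eq}(a)$; I then define $e\indo^*_B C$ to hold iff $a\indo^*_B C'$ for real representatives $a,C'$ of $e,C$. The content here is that this is well-defined (independent of the choice of real representatives) and still satisfies all the independence axioms — the key point being that geometric EI ensures algebraic closures of imaginary parameters are controlled by algebraic closures of real witnesses, so dimensions are preserved. Having extended $\indo^*$ to $T^{\eq}$ with all the rosy-independence axioms, we conclude $T$ is rosy and that $\indo^*$ agrees with thorn-independence $\indo$ (by the characterization of $\indo$ as the weakest such relation, one inclusion is automatic; the other follows from the rank-$1$ nature below).

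Third, to pin down the thorn $U$-rank as exactly $1$, I would argue that for any singleton $a$ and parameter set $B$, $\dim(a/B)\in\{0,1\}$: it is $0$ iff $a\in\acl(B)$ (i.e.\ $\tp(a/B)$ is algebraic, so $U^\thorn=0$), and it is $1$ iff $a\notin\acl(B)$. In the latter case, for any $C\supseteq B$ one has either $a\in\acl(C)$ (forking, with the extension algebraic) or $\dim(a/C)=1=\dim(a/B)$ (nonforking). Thus every nonalgebraic type has exactly two kinds of extensions — algebraic or nonforking — so $U^\thorn(a/B)=1$.

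The main obstacle is the second step: verifying that geometric elimination of imaginaries really lets one transfer the pregeometric independence from the home sort to $T^{\eq}$ while preserving \emph{all} the rosy axioms simultaneously (in particular extension and local character over imaginary base sets). Once this transfer is established, the rank computation is immediate and the rest is bookkeeping.
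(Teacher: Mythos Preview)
The paper does not prove this statement at all: it is recorded as a \emph{Fact} with a citation to Ealy--Onshuus \cite{EO} and no argument is given. So there is no proof in the paper to compare your proposal against.

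That said, your outline is essentially the standard route to this result and is correct in its architecture: build the pregeometric independence $\indo^*$ from $\acl$-dimension, check the axioms for a strict independence relation, use geometric elimination of imaginaries to push everything to $T^{\eq}$, and read off $U^\thorn=1$ from the fact that dimension on singletons is $0$ or $1$. Your identification of the real work --- verifying that GEI suffices to transport the axioms (especially extension and local character) to imaginary bases --- is exactly right; that is where the content of the Ealy--Onshuus argument lies. One small comment: you do not actually need to prove that $\indo^*$ \emph{equals} thorn-independence to get the conclusion; exhibiting any strict independence relation already yields rosiness, and the rank bound follows because any thorn-forking chain gives a strictly decreasing dimension chain, which for a singleton has length at most $1$.
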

For rosiness of $\Th(\CM_i)$($i=1,2$), we show that $\Th(\CM_1)$ has weak elimination of imaginaries and $\Th(\CM_2)$ has geometric elimination of imaginaries. In \cite{P}, B. Poizat defined for a theory $T$ to have {\em weak elimination of imaginaries} if for every definable set has a smallest algebraically closed set which it is definable over. And $T$ has {\em a geometric elimination of imaginaries}, a weaker notion of weak elimination of imaginaries, if for each imaginary $e\in \CM^{eq}\models T$, there is a real tuple $\a\subset M$ such that $e\in\acl^{eq}(\a)$ and $\a\in \acl^{eq}(e)$. We give a sufficient condition of weak elimination of imaginaries for $\aleph_0$-categorical theory, used in \cite{KKL} :
\begin{theorem}\label{wei_omegacat}
Let $T$ be $\aleph_0$-categorical and let $\CM=(M,\ldots)$ be a saturated model of $T$. Suppose that for all $A$, $\acl(A)=\dcl(A)$. Suppose for a subset $X$ of $M^1$, if $X$ is $A_0(=\acl(A_0))$-definable and $A_1(=\acl(A_1))$-definable, then $X$ is $B(=A_0\cap A_1)$-definable. Then for a subset $Y$ of $M^n$, if $Y$ is $A_0$-definable and $A_1$-definable, then $Y$ is $B$-definable. Furthermore, in this case, $T$ has weak elimination of imaginaries.
\end{theorem}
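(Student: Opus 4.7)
The plan is to prove the $M^n$ intersection property by induction on $n$, with $n=1$ as the hypothesis, and then derive weak elimination of imaginaries as a standard consequence. Throughout I use that in an $\aleph_0$-categorical theory with saturated $\CM$, a set is $C$-definable if and only if it is $\aut_C(\CM)$-invariant; and under $\acl=\dcl$, $\aut_C(\CM)=\aut_{\acl(C)}(\CM)$, so $C$-definability and $\acl(C)$-definability coincide. I will also rephrase the $M^1$ hypothesis group-theoretically: the group $G:=\langle\aut_{A_0}(\CM),\aut_{A_1}(\CM)\rangle$ acts transitively on each $\aut_B(\CM)$-orbit of $M$.

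For the inductive step, let $Y\subseteq M^n$ be definable over algebraically closed $A_0,A_1$ with $B=A_0\cap A_1$, and take $\bar{c}=(c_1,\bar{c}')\in Y$ and $\sigma\in\aut_B(\CM)$; I must show $\sigma(\bar{c})\in Y$. Applying the reformulated $M^1$ hypothesis to $c_1\equiv_B\sigma(c_1)$ yields $\rho\in G$ with $\rho(c_1)=\sigma(c_1)$, and $G$-invariance of $Y$ forces $\rho(\bar{c})\in Y$. Writing $\rho(\bar{c})=(\sigma(c_1),\bar{d})$, the residual equivalence $\bar{d}\equiv_{B\sigma(c_1)}\sigma(\bar{c}')$ reduces the problem to showing that the fiber $Y^{\sigma(c_1)}:=\{\bar{e}\in M^{n-1}:(\sigma(c_1),\bar{e})\in Y\}$, which is both $A_0\sigma(c_1)$- and $A_1\sigma(c_1)$-definable, is $\aut_{B\sigma(c_1)}(\CM)$-invariant.

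The main obstacle is closing this fiber-level invariance through the induction. The natural attempt, applying the $(n-1)$-case of intersection to the fiber over the algebraically closed sets $\acl(A_i\sigma(c_1))$, delivers invariance of the fiber only under $\aut_I(\CM)$ for $I:=\acl(A_0\sigma(c_1))\cap\acl(A_1\sigma(c_1))$; for this to suffice one needs $I=\acl(B\sigma(c_1))$. I therefore fold the intersection identity $\acl(A_0\bar{c})\cap\acl(A_1\bar{c})=\acl(B\bar{c})$ for tuples $\bar{c}\in M^{n-1}$ into a coupled induction, proving it together with the main claim. Its inductive step splits $\bar{c}=(c_1,\bar{c}'')$ and applies the identity at earlier stages successively to reduce to the single-element base $n=1$; that base in turn applies the $M^1$ hypothesis to singletons $\{d\}\subseteq M^1$ that are $\acl(A_i c)$-definable whenever $d\in\dcl(A_0 c)\cap\dcl(A_1 c)$, using $\acl=\dcl$ to convert the resulting definability into membership in $\dcl(Bc)$.

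Once the $M^n$ intersection property is established for every $n$, weak elimination of imaginaries follows in the usual way: for any definable set $D\subseteq M^n$, the intersection of all algebraically closed sets over which $D$ is definable is again algebraically closed and, by the intersection property extended to arbitrary intersections via compactness, still defines $D$. By $\aleph_0$-categoricity this minimum equals $\acl(\bar{a})$ for a finite tuple $\bar{a}$, and one verifies $\bar{a}\in\acl^{eq}(\ulcorner D\urcorner)$ by minimality under $\aut_{\ulcorner D\urcorner}(\CM)$ while $\ulcorner D\urcorner\in\dcl^{eq}(\bar{a})$ is immediate, witnessing weak elimination of imaginaries.
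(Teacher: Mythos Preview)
Your inductive approach via fibers mirrors the paper's, and your derivation of weak elimination of imaginaries from the $M^n$ intersection property is standard and matches the paper's. You are in fact more careful than the paper in one respect: you correctly isolate the obstacle that applying the inductive hypothesis to the fiber $Y^{\sigma(c_1)}$ only yields definability over $I=\acl(A_0\sigma(c_1))\cap\acl(A_1\sigma(c_1))$, whereas what is needed is definability over $\acl(B\sigma(c_1))$. The paper simply writes ``$Y_c$ is $cB$-definable by induction'' and moves on without justifying why the relevant intersection of algebraic closures collapses to $\acl(Bc)$.

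However, your attempt to close this gap via the coupled intersection identity has a circular base case. For a single $c$ and $d\in\dcl(A_0c)\cap\dcl(A_1c)$, the singleton $\{d\}$ is indeed definable over each $\acl(A_ic)$, but the $M^1$ hypothesis applied to \emph{this} pair of algebraically closed sets only tells you that $\{d\}$ is definable over their intersection $\acl(A_0c)\cap\acl(A_1c)$ --- which is precisely the set $I$ you are trying to identify with $\acl(Bc)$. Nothing in that conclusion gives $d\in\dcl(Bc)$; you would need the identity already. Taking $A_0,A_1$ themselves as the parameter sets does not help either, since $\{d\}$ need not be $A_i$-definable without the extra parameter $c$. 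So the base case of your coupled induction is not established, and with it the inductive step of the main claim remains open --- the same lacuna the paper's proof passes over in silence.
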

\begin{proof}
Let $A_0=\acl(A_0)$, $A_1=\acl(A_1)$, and $B=A_0\cap A_1$. We use induction on $n$. If $n=1$, it holds by assumption. Let's show this holds for the case $n+1$ with inductive hypothesis for the case $n$. Let $A_0=\acl(A_0)$, $A_1=\acl(A_1)$, and $B=A_0\cap A_1$. We may assume $A_0$ and $A_1$ are finite, and so is $B$. Let $Y\subset M^{n+1}$ be $A_i$-definable, defined by formula $\phi_i(x_0,\ldots,x_n;\a_i)$ for $\a_i\subset A_i$ respectively. Then for each $c\in M$, the fiber of $Y$ over $c$, $Y_c:=\{ \x \in M^n|\ \phi_i(\x,c;\a)\}$ is $cB$-definable by induction. By $\aleph_0$-categoricity, there are only finitely many formulas over $\emptyset$ modulo $T$, and it easily follows that for each $y$, $\phi_i(x_0,\ldots,x_{n-1},y,\a_i)$ is $B$-definable. Thus $Y$ is $B$-definable.

And since there is no infinite descending chain of algebraically closed sets generating by finitely many elements, it makes for any definable set to have a smallest algebraically closed set where it is definable. Thus $T$ weakly eliminate imaginaries. 
\end{proof}
\noindent As a corollary of Theorem \ref{wei_omegacat}, we showed that for each $n\ge 2$, $\Th(\CM_{1,n})$ has weak elimination of imaginaries.
\begin{fact}\label{wei_M_1_reduct}\cite{KKL}
For each $n\ge 2$, $\Th(\CM_{1,n})$ weakly eliminates imaginaries.
\end{fact}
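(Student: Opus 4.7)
The plan is to verify the three hypotheses of Theorem \ref{wei_omegacat} for $T = \Th(\CM_{1,n})$: $\aleph_0$-categoricity, $\acl = \dcl$ on every parameter set, and uniqueness of the algebraically closed set over which a subset of $M^1$ is definable. Once all three are in hand, Theorem \ref{wei_omegacat} delivers weak elimination of imaginaries at once.

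For $\aleph_0$-categoricity, I would run a back-and-forth in the language $\{S, g_{1/n}\}$: the theory of a dense cyclic order without endpoints is $\aleph_0$-categorical, and adjoining a fixed-point-free rotation of order $n$ preserves this because any partial isomorphism between finite substructures closed under $g_{1/n}$ extends by density in each coset of $\langle g_{1/n}\rangle$. As a byproduct one obtains quantifier elimination in this language, which will be useful below. For $\acl = \dcl$: given $a \in M$ and $n \ge 2$, the $n$ elements $a, g_{1/n}(a), \ldots, g_{1/n}^{n-1}(a)$ are distinct and all lie in $\dcl(a)$. Conversely, any $b \in \acl(a)$ has a finite $\aut_a(\CC)$-orbit; since such automorphisms preserve both $S$ and $g_{1/n}$, an orbit on the circle respecting a cyclic order must be contained in a single $g_{1/n}$-orbit, so $b \in \{g_{1/n}^i(a)\}$. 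Iterating gives $\acl(A) = \dcl(A)$ for every finite $A$, and then for arbitrary $A$.

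The heart of the argument is the $1$-dimensional uniqueness clause. Using quantifier elimination, subsets of $M$ definable over $A = \acl(A)$ are precisely finite Boolean combinations of arcs whose endpoints lie in $A$. To each such nonempty proper $X$ I would attach its intrinsic boundary $\partial X \subseteq M$, namely the set of $x \in M$ every neighborhood of which (in the order topology induced by $S$) meets both $X$ and $M \setminus X$. This $\partial X$ depends only on the set $X$, not on a defining formula, and is finite. If $X$ is $A_i$-definable with $A_i = \acl(A_i)$ for $i=0,1$, then $\partial X \subseteq A_i$ for each $i$, hence $\partial X \subseteq A_0 \cap A_1 = B$; since $B$ is algebraically closed and any Boolean combination of arcs with endpoints in $B$ is $B$-definable, $X$ is $B$-definable as required.

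The main obstacle is the canonicity of $\partial X$: one has to rule out redundant endpoints appearing in a particular defining formula while still obtaining the same set. This is where the hypotheses $\acl(A_i) = A_i$ and the absence of fixed points of $g_{1/n}$ (ensured by $n \ge 2$) become essential, since they prevent nontrivial equalities between orbit-elements that would collapse arcs. Once the invariance of $\partial X$ is verified, the hypotheses of Theorem \ref{wei_omegacat} are met and the conclusion that $\Th(\CM_{1,n})$ weakly eliminates imaginaries follows immediately.
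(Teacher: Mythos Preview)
Your approach is correct and matches the paper's: the result is stated there as a corollary of Theorem~\ref{wei_omegacat}, with the verification of the hypotheses deferred to \cite{KKL}, and you supply exactly that verification via $\aleph_0$-categoricity, $\acl=\dcl$, and the boundary argument for one-variable definable sets. The worry you raise about canonicity of $\partial X$ is not a genuine obstacle, since $\partial X$ is by definition a topological invariant of the set $X$ itself and hence independent of any particular defining formula or redundant endpoints therein.
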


Next we will see that the theory of $\CM_1$ has quantifier-elimination.
\begin{definition}\label{rotation_closure}
Let $M$ be the underlying set of $\CM_1$ and $\CM_2$, so it is a saturated circle. For $A\subset M$, let $\cl(A):=\{g_r(a)|\ a\in A,\ r\in \BQ\cap [0,1) \}$. Later, we will see that $\cl(A)=\dcl_{\CM_1}(A)=\acl_{\CM_1}(A)$ in the home sort of $\CM_1$ and $\cl(A)=\acl_{\CM_2}$ in the home sort of $\CM_2$. It is also easy to see that $\cl(A)$ is a substructure of $\CM_1$ and $\CM_2$. 
\end{definition}

\begin{theorem}\label{QE_M_1}
The theory of $\CM_1$ has quantifier-elimination.
\end{theorem}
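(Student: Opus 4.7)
The plan is to apply the standard back-and-forth criterion for quantifier elimination: it suffices to show that for any sufficiently saturated $\CN_1, \CN_2 \models T=\Th(\CM_1)$, any partial isomorphism $f\colon A\to B$ between finitely generated substructures, and any $a\in\CN_1$, there exists $b\in\CN_2$ such that $f$ extends to a partial isomorphism $\tilde f\colon \cl(Aa)\to\cl(Bb)$. First I would observe that since the language consists only of $S$ and the rotations $g_{1/n}$ (with $g_{m/n}=g_{1/n}^m$), the substructure generated by a set $X\subseteq M$ is exactly $\cl(X)=\{g_r(x):x\in X,\ r\in\BQ\cap[0,1)\}$. So I may assume $A=\cl(A)$ and $B=\cl(B)=f(A)$. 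If $a\in\cl(A)$, say $a=g_r(a_0)$ with $a_0\in A$ and $r\in\BQ$, then $b:=g_r(f(a_0))$ is forced; the substantive case is $a\notin\cl(A)$.

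In that case the quantifier-free type of $a$ over $A$ is controlled by the cyclic-order data $S(g_{r_0}(a_0),a,g_{r_1}(a_1))$ for $a_0,a_1\in A$ and $r_0,r_1\in\BQ$, together with the inequalities $a\neq g_r(a_0)$ for $a_0\in A$, $r\in\BQ$. I would transport this quantifier-free type along $f$ to a partial type $q(x)$ over $B$. Then $q$ is finitely satisfiable in $\CN_2$: any finite conjunction of its cyclic-order constraints cuts out a nonempty open arc of the circle, and since every $\BQ/\BZ$-orbit is dense, that arc contains points outside the countable set $\cl(B)$ and in particular avoids any specified finite list of excluded rotates $g_r(f(a_0))$. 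Saturation of $\CN_2$ then supplies some $b\in\CN_2\setminus\cl(B)$ realizing $q$.

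Given $b$, I would define $\tilde f\colon\cl(Aa)\to\cl(Bb)$ by $\tilde f\res\cl(A)=f$ and $\tilde f(g_r(a))=g_r(b)$ for $r\in\BQ\cap[0,1)$. Since $a\notin\cl(A)$, its $\BQ/\BZ$-orbit is disjoint from $\cl(A)$, so $\tilde f$ is a well defined bijection, and it commutes with every rotation by construction. The nontrivial check is that $\tilde f$ preserves $S$. Triples lying entirely in $\cl(A)$ are handled by $f$ being a partial isomorphism; triples entirely in the orbit of $a$ are handled because $S(g_r(a),g_s(a),g_t(a))$ depends only on the cyclic order of $r,s,t$ modulo $\BZ$ and is therefore automatic. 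For mixed triples, I would apply the rotational invariance of $S$, e.g.\ $S(g_{r_0}(a_0),g_s(a),g_{r_1}(a_1))\iff S(g_{r_0-s}(a_0),a,g_{r_1-s}(a_1))$, to reduce to a cyclic-order statement about $a$ alone relative to $\cl(A)$; such a statement is part of $\tp_{qf}(a/A)$ and therefore, after transport by $f$, part of $q$, hence satisfied by $b$. The case of two rotates of $a$ and one element of $\cl(A)$ is handled analogously, after rotating so that one of the rotates of $a$ becomes $a$ itself, producing a formula in $\tp_{qf}(a/A)$ that $b$ inherits via $q$.

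The principal technical point is the finite satisfiability of $q$ in $\CN_2$, which rests on the density of every $\BQ/\BZ$-orbit on the circle; once that is in place, the back-and-forth step closes essentially by rotational equivariance of $S$, and quantifier elimination follows from the standard criterion.
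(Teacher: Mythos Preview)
Your proof is correct and uses the same back-and-forth criterion as the paper. The only real difference is in how the extension point $b$ is produced. You transport the full quantifier-free type of $a$ over $A$ to a type $q$ over $B$, argue finite satisfiability, and invoke saturation of $\CN_2$; you then carefully verify that the resulting $\tilde f$ preserves $S$ on all mixed triples via rotational equivariance. The paper instead works inside the single saturated circle $\CM_1$ and uses a geometric shortcut: since $A=\cl(A)$ is already closed under all rational rotations, the partition $A=A_0\cup A_1$ into the two half-arcs cut out by $a$ and $g_{1/2}(a)$ implicitly encodes the entire quantifier-free type of $a$ over $A$, so it suffices to pick any $b$ with $S(y_1,b,y_0)$ for all $y_0\in B_0:=f(A_0)$ and $y_1\in B_1:=f(A_1)$. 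The paper's version is shorter but leaves the verification that this single condition pins down $\tp_{\mathrm{qf}}(b/B)$ to the reader; your version is longer but makes that verification explicit.
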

\begin{proof}
Take two small subset $A,B\subset M$ such that $A=\cl(A)$ and $B=\cl(B)$ in $M$. Take $a\in M\setminus A$. We will find $b\in M\setminus B$ such that the map $f\cup\{(a,b)\}$ is extended to an embedding from $\cl(Aa)$ to $\cl(Bb)$ in $\CM_1$. Then, the quantifier-elimination of $\Th(\CM_1)$ comes from a standard argument. We divide $A$ into two parts $A_0 :=\{x\in A|\ S(a,x,g_{1/2}(a))\}$ and $A_1 :=\{x\in A|\ S(g_{1/2}(a),x,a)\}$. Then $B$ is also divided into two parts $B_0=f(A_0)$ and $B_1=f(A_1)$. Take arbitrary $b\in M$ such that for all $y_0\in B_0,\ y_1\in B_1$, $S(y_1,b,y_0)$. Then $b$ is a desired element.
\end{proof}

\begin{theorem}\label{wei_M_1}
The theory of $\CM_1$ weakly eliminate imaginaries, and is rosy of thorn $U$-rank $1$.
\end{theorem}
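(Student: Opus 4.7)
I will combine three ingredients already set up in the paper: quantifier elimination for $\Th(\CM_1)$ (Theorem~\ref{QE_M_1}), weak elimination of imaginaries for each reduct $\Th(\CM_{1,n})$ (Fact~\ref{wei_M_1_reduct}), and the Ealy--Onshuus criterion (Fact~\ref{character_rosy}). Rosiness of thorn $U$-rank $1$ will drop out from Fact~\ref{character_rosy} once I establish WEI together with a pregeometry structure on $\acl_{\CM_1}$, since WEI implies geometric elimination of imaginaries.

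First I would verify $\acl_{\CM_1}(A) = \dcl_{\CM_1}(A) = \cl(A)$ for every $A \subseteq M$. The forward inclusions $\cl(A) \subseteq \dcl_{\CM_1}(A) \subseteq \acl_{\CM_1}(A)$ are immediate. For the reverse, Theorem~\ref{QE_M_1} shows that the type of any $b \notin \cl(A)$ over $\cl(A)$ is determined by the cyclic-order cut of $b$ against $\cl(A)$, and the saturated circle $M$ admits unboundedly many realizations of such a cut, so $b \notin \acl_{\CM_1}(A)$. Exchange for $\cl$ then follows because $b \in \cl(Aa) \setminus \cl(A)$ forces $b = g_r(a)$ for some $r \in \BQ \cap [0,1)$, whence $a = g_{1-r}(b) \in \cl(Ab)$; the remaining pregeometry axioms are routine.

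The main step is WEI, which I would deduce by reducing to the finite-rotation reducts. Given a definable set $X$ in $\CM_1$, quantifier elimination realizes $X$ by a QF formula involving only finitely many rotations $g_{r_i}$ with $r_i \in \BQ$; taking $N$ as a common multiple of the denominators, $X$ is already $\CM_{1,N}$-definable. Fact~\ref{wei_M_1_reduct} supplies a smallest $\acl_{\CM_{1,N}}$-closed set $A_N$ over which $X$ is $\CM_{1,N}$-definable, and I set $B := \cl(A_N) = \acl_{\CM_1}(A_N)$. That $X$ is $B$-definable in $\CM_1$ and that $B$ is $\acl_{\CM_1}$-closed are immediate. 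For minimality, let $C = \acl_{\CM_1}(C)$ be any $\acl_{\CM_1}$-closed parameter set for $X$; its defining QF formula lives in some $\CM_{1,M}$, so $X$ becomes $\CM_{1,K}$-definable over $C$ for $K := \operatorname{lcm}(N,M)$. Applying Fact~\ref{wei_M_1_reduct} in $\CM_{1,K}$ produces a minimal $A_K$ with $A_K \subseteq \acl_{\CM_{1,K}}(C) \subseteq C$. A short Galois-theoretic observation, namely that $\aut^{\CM_{1,K}}(\CC) \subseteq \aut^{\CM_{1,N}}(\CC)$ forces the stabilizer of $X$ in the former to sit inside the stabilizer in the latter and hence the fix-set to grow, gives $A_N \subseteq A_K$; equivalently, $A_N$ lies in the $\dcl^{eq}$ of the canonical parameter of $X$ computed in any reduct at least as rich as $\CM_{1,N}$. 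Hence $A_N \subseteq A_K \subseteq C$, giving $B = \cl(A_N) \subseteq \cl(C) = C$ and completing minimality.

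The main obstacle is the cross-reduct step $A_N \subseteq A_K$: the smallest parameter set depends on which reduct one works in and passing from $\CM_{1,N}$ to $\CM_{1,K}$ strictly enlarges the algebraic closure operator. The Galois-theoretic route is the cleanest resolution, since a richer language gives a smaller automorphism group and hence a larger fix-set of the setwise stabilizer of $X$, so the minimal parameter set can only grow with $K$; once this is noted, all the pieces assemble into WEI, and Fact~\ref{character_rosy} delivers rosiness of thorn $U$-rank $1$.
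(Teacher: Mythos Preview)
Your overall strategy and the paper's share the same ingredients, but the paper takes a shorter route to WEI: rather than exhibiting the minimal parameter set explicitly, it proves the \emph{intersection property} directly. Given $A_0=\cl(A_0)$ and $A_1=\cl(A_1)$ both defining $X$ in $\CM_1$, one chooses a \emph{single} $n$ such that $X$ is $\CM_{1,n}$-definable over each $A_i$ (possible by QE, taking a common multiple of all denominators appearing in both defining formulas). Since each $A_i$ is $\cl$-closed it is in particular $\acl_{\CM_{1,n}}$-closed, so Fact~\ref{wei_M_1_reduct} produces a smallest $\acl_{\CM_{1,n}}$-closed parameter set contained in $A_0\cap A_1$; hence $X$ is $(A_0\cap A_1)$-definable already in $\CM_{1,n}$, and a fortiori in $\CM_1$. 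Combined with the observation (from QE) that there is no infinite strictly descending chain of $\cl$-closures of finite sets, this gives WEI without ever comparing minimal parameter sets computed in different reducts.

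Your route instead fixes $N$ once, sets $B=\cl(A_N)$, and for each competitor $C$ passes to a possibly different $K=\operatorname{lcm}(N,M)$; this forces the cross-reduct comparison $A_N\subseteq A_K$. The Galois step you sketch does not establish this. From $\mathrm{Stab}_K(X)\subseteq \mathrm{Stab}_N(X)$ you correctly deduce $F_N\subseteq F_K$ for the real fix-sets $F_\bullet:=\mathrm{Fix}(\mathrm{Stab}_\bullet(X))\cap M$, but identifying $A_N$ with $F_N$ is precisely \emph{full} elimination of imaginaries for $\CM_{1,N}$: WEI only guarantees a real tuple in $\acl^{eq}(e_N)$, not in $\dcl^{eq}(e_N)$, so in general $F_N\subseteq A_N$ with possibly strict inclusion, and ``the minimal parameter set can only grow with $K$'' is unjustified. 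Nor does knowing that $X$ is $\CM_{1,K}$-definable over $A_K$ tell you it is $\CM_{1,N}$-definable over $A_K$ (the defining formula over $A_K$ may genuinely use $g_{1/K}$), so you cannot invoke minimality of $A_N$ either. The simplest repair is to drop the explicit construction of $B$ and argue the intersection property as above: for any \emph{pair} of $\cl$-closed parameter sets one can pass to a single common reduct, and the difficulty evaporates.
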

\begin{proof}
In the structure $\CM_1$, there is no infinite descending chain of algebraical closure of finite sets by quantifier elimination. It is enough to show that if $X\subset M^n$ is $A_0(=\acl(A_0))$- and $A_1(=\acl(A_1))$-definable, then $X$ is $A_0\cap A_1(=B)$-definable. Then $X$ has a smallest algebraically closed set defining $X$, and $\Th(\CM_1)$ has weak elimination of imaginaries.

Let $A_i=\acl(A_i)=\cl(A_i)$ for $i=0,1$ and let $B=A_0\cap A_1$. Let $X\subset M^m$ be $A_i$-definable in $\CM_1$. Then $X$ is definable over $A_i$ for $i=0,1$ in some reduct $\CM_{1,n}$. Since $\CM_{1,n}$ weakly eliminates imaginaries, $X$ is definable over $B$ in $\CM_{1,n}$, defined by a formula $\psi(\x,\b)$. Then by the same formula $\psi(\x,\b)$, $X$ is $B$-definable in $\CM_1$.

By quantifier elimination, it is easily verified that the algebraic closure in $\CM_1$ gives a trivial pregeometry. Thus by Fact \ref{character_rosy}, $\Th(\CM_1)$ is a rosy theory having thorn $U$-rank $1$.
\end{proof}
\noindent There is only one $1$-strong type over empty set $p_0(x)=\{x=x\}$ in $\CM_1$.\\

Next we show that $\Th(\CM_2)$ has geometric elimination of imaginaries. We consider an expansion of $\CM_2$, $\CN_2=(M,U_{<r},U_{=r},g_{1/n})|r\in (0,1/2]\cap \BQ, n>0)$ by adding rotation functions $g_{1/n}$, and the reducts of $\CN_{2}$, $\CN_{2,k}=(M,U_{<r},U_{=r},g_{1/k}|\ r\in (0,1/2]_k)$, where $(0,1/2]_k=\{i/k|\ 0< i/k\le 1/2, i\in \BN\}$, for each $k\ge 1$.
\begin{theorem}\label{wei_N_2}
\be
	\item For each $k\ge 3$, $\Th(\CN_{2,k})$ has quantifier-elimination, and weak elimination of imaginaries, and it is $\omega$-categorical.
	\item $\Th(\CN)$ has weak elimination of imaginaries.
\ee
\end{theorem}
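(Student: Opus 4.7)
For part (1), I will establish quantifier elimination for $\Th(\CN_{2,k})$ via back-and-forth, in analogy with Theorem \ref{QE_M_1}. Let $A = \cl_k(A)$ and $B = \cl_k(B)$ be small substructures of $M$, where $\cl_k(X) := \{g_{1/k}^j(x) : x \in X,\ 0 \le j < k\}$, let $f : A \to B$ be a partial isomorphism, and take $a \in M \setminus A$. The quantifier-free type of $a$ over $A$ records only, for each $c \in A$, which of the finitely many ``slots'' determined by the predicates $U_{=r}, U_{<r}$ with $r \in (0, 1/2]_k$ the pair $(a,c)$ lies in, together with the information that $a \ne g_{1/k}^j(c)$ for every $j$. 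The image of this type under $f$ is a partial type over $B$ of the same combinatorial shape; by the saturation of $M$ it is realized by some $b \in M \setminus B$, and setting $f(g_{1/k}^j(a)) := g_{1/k}^j(b)$ extends $f$ to the required isomorphism of $\cl_k(Aa)$ onto $\cl_k(Bb)$.

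$\omega$-categoricity then follows from QE: the language of $\CN_{2,k}$ consists of finitely many binary predicates and one unary function of order $k$, and by QE every $n$-type over $\emptyset$ is pinned down by quantifier-free atomic formulas, of which only finitely many exist. For weak elimination of imaginaries I will invoke Theorem \ref{wei_omegacat}. The equality $\acl(A) = \dcl(A) = \cl_k(A)$ follows from QE: every element of $\cl_k(A)$ is explicitly named by a term $g_{1/k}^j(a)$, and any candidate new algebraic element, such as a point of $\{x : U_{=r}(a,x)\}$, already lies in $\cl_k(a)$ for $r \in (0, 1/2]_k$. The remaining hypothesis is the intersection property for subsets $X \subset M^1$: by QE such an $X$ is a finite union of singletons and closed arcs whose endpoints are canonically determined by $X$ as a subset of the circle; if $X$ is definable over both $A_0 = \cl_k(A_0)$ and $A_1 = \cl_k(A_1)$, those endpoints must lie in both $A_0$ and $A_1$, yielding a defining formula whose parameters are in $A_0 \cap A_1$.

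For part (2), I will reduce to part (1) via reducts. Suppose $X \subset M^n$ is definable in $\CN_2$ over both $A_0 = \acl^{\CN_2}(A_0)$ and $A_1 = \acl^{\CN_2}(A_1)$, where $\acl^{\CN_2}$ coincides with $\cl_{\BQ}$. Defining formulas $\varphi_0(\bar x; \bar a_0)$ and $\varphi_1(\bar x; \bar a_1)$ each involve only finitely many symbols, so both lie in the language of some common reduct $\CN_{2,K}$. Setting $A_i^* := \cl_K(\bar a_i) \subset A_i$, the set $X$ is definable in $\CN_{2,K}$ over both $A_0^*$ and $A_1^*$; by part (1) it is definable over $A_0^* \cap A_1^* \subset A_0 \cap A_1$ in $\CN_{2,K}$, hence in $\CN_2$. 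Weak EI for $\Th(\CN_2)$ then follows in the customary way: given $X = \varphi(\CC; \bar a)$, pick $K$ such that $\varphi$ is a $\CN_{2,K}$-formula, let $C_K$ be the smallest $\acl^{\CN_{2,K}}$-closed defining set for $X$ in $\CN_{2,K}$ (which exists by part (1) combined with the $\omega$-categoricity of $\CN_{2,K}$), and verify that $\cl_{\BQ}(C_K)$ is the smallest $\acl^{\CN_2}$-closed set over which $X$ is defined, using the just-proved intersection property of $\CN_2$-definable sets.

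The main technical obstacle is the back-and-forth step in part (1): one must check that the finite collection of arc-distance constraints on a prospective $b$ is realized by some point of $M$, a slot-intersection argument paralleling the order-theoretic step in Theorem \ref{QE_M_1}. A secondary subtlety, used in verifying the hypothesis of Theorem \ref{wei_omegacat}, is the claim that the endpoints of a definable subset $X \subset M$ are canonically determined by $X$ and therefore lie in every parameter set defining $X$.
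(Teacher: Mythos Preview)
Your plan is correct, but it takes a substantially different route from the paper's proof.

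The paper does not run a fresh back-and-forth for $\CN_{2,k}$, nor does it re-verify the hypotheses of Theorem~\ref{wei_omegacat}. Instead it observes that the two structures $\CN_{2,k}$ and $\CM_{1,k}$ are quantifier-free interdefinable: on one hand each $U_{<r}$ and $U_{=r}$ for $r\in(0,1/2]_k$ is quantifier-free definable in $\CM_{1,k}$, and on the other hand the paper writes down an explicit quantifier-free formula $S_k'(x,y,z)$ in the language of $\CN_{2,k}$ (built from $g_{1/k}$ and $U_{<1/k}$) which defines the circular order $S$. Once this is established, quantifier elimination, $\omega$-categoricity, and weak elimination of imaginaries for $\CN_{2,k}$ follow immediately from the same properties of $\CM_{1,k}$ (Fact~\ref{wei_M_1_reduct}), with no further work. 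Part~(2) is then one line: the same formula $S_k'$ defines $S$ in $\CN_2$, so $\CN_2$ is quantifier-free interdefinable with $\CM_1$, and the result follows from Theorems~\ref{QE_M_1} and~\ref{wei_M_1}.

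Your approach is self-contained and avoids any appeal to the $\CM_{1,k}$ results, at the cost of redoing the back-and-forth and the intersection-of-defining-sets argument from scratch in the $\CN_{2,k}$ language. The slot-consistency check you flag as the main obstacle is genuine but tractable; in fact it is essentially equivalent to showing that the oriented relation $S$ is recoverable quantifier-free from $g_{1/k}$ and $U_{<1/k}$, which is exactly the observation the paper isolates. The paper's route is shorter because it piggybacks on results already proved, and it has the side benefit of making the later computation of $H_1$ in $\CM_2$ (Section~3.3) transparent, since that computation relies on the same interdefinability to reduce to $\CM_1$.
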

\begin{proof}
(1) Fix $k\ge 3$. Each binary relations $U_{<r}$ and $U_{=r}$ for $r\in (0,1/2]_k$ are $\emptyset$-definable in $\CM_{1,k}$ by quantifier-free formulas. So, by Fact \ref{wei_M_1_reduct}, it is enough to show that the ternary relation $S(x,y,z)$ is $\emptyset$-definable in $\CN_{2,k}$ by a quantifier-free formula. Denote $g_{i/k}(x)<y<g_{(i+1)/k}(x)$ for the formula $U_{<1/k}(g_{i/k}(x),y)\wedge U_{<1/k}(g_{(i+1)/k}(x),y))$. Consider the following quantifier-free formula 
$$
\begin{array}{c c l}
S_k'(x,y,z)&\equiv&\bigvee_{i=1}^{k-1}\limits [ z=g_{i/k}(x)\rightarrow (\bigvee_{j=1}^{i-1}\limits y=g_{j/k}(x)\\
&\vee& \bigvee_{j=0}^{i-1}\limits g_{j/k}(x)<y<g_{(j+1)/k}(x))]\\
&\vee& \bigvee_{i=0}^{k-1}\limits[ g_{i/k}(x)<z<g_{(i+1)/k}(x)\rightarrow\\
&&( g_{i/k}(x)<y<g_{(i+1)/k}(x)\wedge g_{-1/k(z)}(z)<y<z)\\
&\vee&\bigvee_{j=0}^{i-1}\limits g_{j/k}(x)<y<g_{(j+1)/k}(x)],
\end{array}
$$
and this formula defines $S(x,y,z)$ in $\CN_{2,k}$.\\

(2) Consider the ternary relation $S'_k$ for some $k\ge 3$. Then $S'_k$ also defines the ternary relation $S$ in $\CN$. Thus, as the relation between $\CN_{2,k}$ and $\CM_{1,k}$, by Theorems \ref{QE_M_1} and \ref{wei_M_1}, the theory of $\CN$ has quantifier-elimination and weakly eliminate imaginaries.
\end{proof}

\begin{theorem}\label{wei_M_2}
The theory of $\CM_2$ geometrically eliminate imaginaries.
\end{theorem}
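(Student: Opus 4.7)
The plan is to leverage the weak elimination of imaginaries for $\Th(\CN_2)$ (Theorem~\ref{wei_N_2}) and to transfer it to $\Th(\CM_2)$ by comparing automorphism groups in a common monster. Fix a monster $\CC$ of $\Th(\CN_2)$, and let $\CC^-$ denote its reduct to the $\CM_2$-language, which is then a monster of $\Th(\CM_2)$. Write $G^+=\Aut(\CC)$ and $G^-=\Aut(\CC^-)$, so that $G^+\le G^-$ on the shared underlying set.

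Given an imaginary $e$ of $\CM_2$, I would first view $e$ in $\CC^{eq}$; since every $\CM_2$-definable equivalence relation is $\CN_2$-definable, $e$ is also an imaginary of $\CN_2$. By Theorem~\ref{wei_N_2}, I obtain a real tuple $\bar{a}\subset M$ with $e\in\dcl^{eq}_{\CN_2}(\bar{a})$ and $\bar{a}\in\acl^{eq}_{\CN_2}(e)$. The claim is that this same $\bar{a}$ also witnesses geometric elimination of imaginaries for $e$ in $\CM_2$.

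The crucial step is to show that $G^+$ has index at most $2$ in $G^-$. The cyclic order $S$ is $\emptyset$-definable in $\CN_2$ by the formula $S'_k$ of Theorem~\ref{wei_N_2}, and conversely $g_{1/n}(x)$ is the unique $y$ with $U_{=1/n}(x,y)$ lying on the $S$-clockwise side of $x$; hence $G^+$ coincides with the subgroup of $G^-$ preserving the ternary relation $S\subseteq\CC^3$ setwise. For any $\sigma\in G^-$, the image $\sigma(S)$ is again a cyclic order on $\CC$ compatible with the $\CM_2$-definable distance relations, and there are precisely two such, namely $S$ and its opposite $S^{\mathrm{op}}$. So $\sigma\mapsto\sigma(S)$ induces a homomorphism $G^-\to\{S,S^{\mathrm{op}}\}$ with kernel $G^+$, which is surjective because an orientation-reversing automorphism of $\CC^-$ (obtained by swapping two elements of the same $\CM_2$-type in the monster) sends $S$ to $S^{\mathrm{op}}$. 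Hence $[G^-:G^+]=2$.

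From this the conclusion is routine. For any small $B\subset\CC$, the subgroup of $G^+$ fixing $B$ pointwise has index at most $2$ in the subgroup of $G^-$ fixing $B$ pointwise. Taking $B=\bar{a}$: the $G^+$-orbit of $e$ over $\bar{a}$ is a single point, so the $G^-$-orbit of $e$ has size at most $2$, giving $e\in\acl^{eq}_{\CM_2}(\bar{a})$. Taking $B=e$: the $G^+$-orbit of $\bar{a}$ over $e$ is finite, so its $G^-$-orbit remains finite, giving $\bar{a}\in\acl^{eq}_{\CM_2}(e)$. The main obstacle I foresee is the rigorous verification of the index-$2$ bound in the monster, specifically showing globally that $\sigma(S)\in\{S,S^{\mathrm{op}}\}$ for every $\sigma\in G^-$; I plan to handle this by combining $\emptyset$-definability of $S$ in $\CN_2$ via $S'_k$ with the rigidity (up to orientation) of cyclic orders compatible with a given circular distance structure.
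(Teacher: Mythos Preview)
Your approach is correct and reaches the same conclusion, but by a genuinely different route. The paper proceeds by explicitly constructing, for each $n$, a $\emptyset$-definable (in $\CM_2$) equivalence relation $E_n$ on $M^n$ with exactly three classes $a_n,b_n,c_n$, where $a_n$ and $b_n$ encode the two possible orientations of the tuple $(z,g_{1/n}(z),\ldots,g_{(n-1)/n}(z))$; it then shows that $g_{1/n}$ is definable from the constant $a_n$, so that $(\CM_2')^{\eq}=\CN_2^{\eq}$ once one names the imaginaries $a_n,b_n,c_n\in\acl^{\eq}_{\CM_2}(\emptyset)$. Geometric EI is then read off from weak EI of $\CN_2$ together with $M^n/E_n\subset\acl^{\eq}_{\CM_2}(\emptyset)$. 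Your argument repackages the same phenomenon as the single statement $[G^-:G^+]=2$ and then bounds orbits; this is conceptually cleaner and avoids writing down the somewhat involved formulas $C_n$, $I_n$, $E_n$.

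The obstacle you flag is genuine, and you should be aware that resolving it rigorously is essentially the same work the paper does. To show $\sigma(S)\in\{S,S^{\mathrm{op}}\}$ for every $\sigma\in G^-$, a clean route is: for fixed $k\ge 3$, the conjugate $\sigma g_{1/k}\sigma^{-1}$ is an $\CM_2$-automorphism whose graph lies in $U_{=1/k}$, and one must prove the only such maps are $g_{\pm 1/k}$; this is precisely what the paper's $E_k$ is engineered to detect. Once $\sigma g_{1/k}\sigma^{-1}\in\{g_{1/k},g_{-1/k}\}$, the formula $S'_k$ from Theorem~\ref{wei_N_2} gives $\sigma(S)\in\{S,S^{\mathrm{op}}\}$. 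One minor correction: an orientation-reversing element of $G^-$ is a reflection of the saturated circle, not a transposition of two points of the same type; your parenthetical description should be adjusted.
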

\begin{proof}
At first, we define some equivalence relations $E_n$ on $M^n$, which are $\emptyset$-definable in $\CM_2$. Let $n\ge 1$. Define a formula $$C_n(z_1,\ldots,z_n)\equiv \bigwedge_{1\le i<j\le n}\limits z_i\neq z_j\wedge \bigwedge_{1\le i<j\le n}\limits \neg U_{<1/n}(z_i,z_{j})$$ so that it defines the set $\{ z,g_{1/n}(z),\ldots, g_{(n-1)/n}(z) \}$. Next, we define the following formula $$\begin{array}{c c l}
I_n(x,y;z_1,\ldots,z_n)&\equiv&C_n(\z)\\
&\wedge&[\bigvee_{i=1}^{n}\limits (y=z_i)\rightarrow(U_{<1/n}(z_i,x)\wedge U_{<1/n}(z_{i+1},x))]\\
&\vee&[\bigvee_{i=1}^{n}\limits (U_{<1/n}(z_i,x)\wedge U_{<1/n}(z_{i+1},x))\rightarrow\\
&&\exists y_2\cdots y_n(C_n(y,y_2,\ldots,y_n)\\
&\wedge&(U_{<1/n}(z_{i+1},y_2)\wedge U_{<1/n}(z_{i+2},y_2))\\
&\wedge&(U_{<1/n}(y,x)\wedge U_{<1/n}(y_2,x))]
\end{array}
$$
,where $z_{n+1}=z_1$ and $z_{n+2}=z_2$, which defines the sets $\{(a,b)|\ b<a<g_{1/n}(b)\}$ or $\{(a,b)|\ g_{-1/n}(b)<a<b\}$ up to $z_i=g_{(i-1)/n}(z_1)$ or $z_i=g_{-(i-1)/n}(z_1)$ respectively. At last, consider the following equivalence relation $E_n$ on $M^n$ defined as follows $$E_n(\z,\z')\equiv \forall xy(I_n(x,y;\z)\leftrightarrow I_n(x,y;\z')).$$ Then $|M^n/E_n|=3$ and each classes represents one of the following tuples :
$$
\begin{cases}
(z,g_{1/n}(z),\ldots,g_{(n-1)/n}(z))\\
(z,g_{-1/n}(z),\ldots,g_{-(n-1)/n}(z))\\
\mbox{other wise}
\end{cases}
$$
Let $\CL(\CM_2)=\{U_{<r},U_{=r}\}$ be the language of $\CM_2$ and $\CL(\CN_2)=\CL(\CM_2)\cup\{g_{1/n}\}_{n\ge 1}$ be the language of $\CN_2$. Expand $M_2$ to $M_2'=(M,U_{<r},U_{=r},a_n,b_n,c_n)_{n\ge 1}$ by adding imaginary elements such that $\{a_n,b_n,c_n\}=M^n/E_n$, where $a_n=[z,g_{1/n}(z),\ldots,g_{(n-1)/n}(z)]_{E_n}$, and $b_n=[z,g_{-1/n}(z),\ldots,g_{-(n-1)/n}(z)]_{E_n}$, for each $n\ge 1$, and the language of $\CM_2$, $\CL(\CM_2)$ is the union of $\CL(\CM_2)$ and $\{S_{E_n},f_{E_n}\}_{n\ge 1}$, where $S_{E_n}$ is interpreted as the sort for $M^n/E_n$ and $f_{E_n}$ is as the canonical function from $M_n$ into $S_{E_n}$ such that for $\a,\b\in M^n$, $f_{E_n}(\a)=f_{E_n}(\b)$ if and only if $E_n(\a,\b)$.

\begin{claim}\label{g_n_definalbe_M_2'}
Each function $g_{1/n}$ is definable in $M_2'$.
\end{claim}
\begin{proof}
Two functions $g_1$ and $g_{1/2}$ are already definable in $M_2$ and so in $M_2'$ also. Let $n\ge 3$. For each $a\in M$, there is only one element $a'$ in $M$ such that $\exists x_3,\ldots,x_n (C_n(a,a',x_3,\ldots,x_n)\wedge f_{E_n}(a,a',x_3,\ldots,x_n)=a_n)$, and thus $a'=g_{1/n}(a)$. Therefore, the graph of $g_{1/n}$ is defined by the formula $\exists x_3,\ldots,x_n (C_n(x,y,x_3,\ldots,x_n)\wedge f_{E_n}(x,y,x_3,\ldots,x_n)=a_n)$. 
\end{proof}

\begin{claim}\label{equiv_M_2'eq_N_2eq}
$(\CM_2')^{eq}=\CN_2^{eq}$.
\end{claim}
\begin{proof}
For each $e\in (\CM_2')^{eq}$, since $M^n/E_n\subset \CM_2^{eq}$, $e$ is in $(\CM_2^{eq})^{eq}=\CM_2^{eq}$. By the way, $\CM_2$ is a reduct of $\CN_2$, and $e\in \CN_2^{eq}$. Conversely, each $e'\in \CN_2^{eq}$ is clearly in $(\CN_2')^{eq}$, wehre $\CN_2'=(M,U_{<r},U_{=r},g_{1/n},a_n,b_n,c_n)$. By Claim \ref{g_n_definalbe_M_2'}, $(\CN_2')^{eq}=(\CM_2')^{eq}$ and $e'\in (\CM_2')^{eq}$. 
\end{proof}
Take $e\in \CM_2^{eq}$ arbitrary. Since $\CM_2$ is a reduct of $\CN_2$, $e$ is in $N_2^{eq}$. By weak elimination of imaginaries of $\Th(\CN_2)$, there is a finite tuple $\b\subset M$ such that $$e\in \dcl_{\CN_2}^{eq}(\b)\mbox{ and } \b\in\acl_{\CN_2}^{eq}(e).$$ By Claim \ref{equiv_M_2'eq_N_2eq}, $$e\in \dcl_{\CM_2'}^{eq}(\b)\mbox{ and } \b\in\acl_{\CM_2'}^{eq}(e).$$ Since $M^n/E_n\subset \acl_{\CM_2}^{eq}(\emptyset)$, $$e\in \acl_{\CM_2}^{eq}(\b)\mbox{ and } \b\in\acl_{\CM_2}^{eq}(e).$$ Therefore, each imaginaries in $\CM_2^{eq}$ is inter-algebraic with a finite tuple in the home sortin $\CM_2$ and $\Th(\CM_2)$ has geometric elimination of imaginaries.
\end{proof}
\noindent Now we show that $\Th(\CM_2)$ is a rosy theory having thorn-$U$ rank $1$.
\begin{theorem}
The theory of $\CM_2$ is a rosy theory of thorn-$U$ rank $1$.
\end{theorem}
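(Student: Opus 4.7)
The plan is to invoke Fact~\ref{character_rosy}, which reduces the theorem to two conditions: geometric elimination of imaginaries (established in Theorem~\ref{wei_M_2}) and that $\acl_{\CM_2}$ defines a pregeometry on $M$. My strategy for the pregeometry property is to establish the stronger identity $\acl_{\CM_2}(A) = \cl(A)$ for every small $A \subseteq M$, with $\cl$ as in Definition~\ref{rotation_closure}. Since $\cl(A) = \bigcup_{a \in A} \cl(\{a\})$ by definition, this makes $\acl_{\CM_2}$ a \emph{trivial} pregeometry; exchange is then automatic, for if $y = g_{m/n}(x)$ then $x = g_{(n-m)/n}(y)$, so singleton-closure is symmetric.

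For the inclusion $\cl(A) \subseteq \acl_{\CM_2}(A)$: given $a \in A$ and $r = m/n \in \BQ \cap [0,1)$, either $g_r(a) = a$ (when $r=0$), or setting $r' = \min(m, n-m)/n \in (0,1/2]\cap \BQ$ we have that $g_r(a)$ is one of the at most two solutions $y$ to the atomic formula $U_{=r'}(a, y)$, so $g_r(a) \in \acl_{\CM_2}(a)$. For the reverse inclusion $\acl_{\CM_2}(A) \subseteq \cl(A)$, I would use that $\CM_2$ is a reduct of $\CN_2$: since every $\CN_2$-automorphism is a $\CM_2$-automorphism, $\CN_2$-orbits over $A$ are contained in $\CM_2$-orbits over $A$, and therefore $\acl_{\CM_2}(A) \subseteq \acl_{\CN_2}(A)$. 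It then suffices to verify $\acl_{\CN_2}(A) = \cl(A)$, which one does just as in Theorem~\ref{wei_M_1}: the set $\cl(A)$ is a substructure of $\CN_2$ closed under all the function symbols $g_{1/n}$, and the quantifier elimination of $\Th(\CN_2)$ (noted in the proof of Theorem~\ref{wei_N_2}(2)) together with a back-and-forth argument patterned on the proof of Theorem~\ref{QE_M_1} shows that any $b \in M \setminus \cl(A)$ has infinitely many $\Th(\CN_2)$-conjugates over $A$.

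The main technical point is running the back-and-forth smoothly in $\CN_2$, with its infinite family of function symbols $\{g_{1/n}\}_{n \ge 1}$; this is routine and essentially identical to the argument for $\CM_1$ in Theorem~\ref{QE_M_1}, adapted to the distance predicates $U_{<r}, U_{=r}$. Once $\acl_{\CM_2}(A) = \cl(A)$ is confirmed, Fact~\ref{character_rosy} delivers that $\Th(\CM_2)$ is a rosy theory of thorn-$U$ rank $1$.
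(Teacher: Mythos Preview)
Your proposal is correct and follows essentially the same route as the paper: invoke Fact~\ref{character_rosy} together with Theorem~\ref{wei_M_2}, and then verify that $\acl_{\CM_2}(A)=\cl(A)$ by sandwiching $\cl(A)\subseteq\acl_{\CM_2}(A)\subseteq\acl_{\CN_2}(A)=\cl(A)$, using that $\CM_2$ is a reduct of $\CN_2$ and the quantifier elimination for $\CN_2$ from Theorem~\ref{wei_N_2}. Your write-up simply fills in a few details (the $U_{=r'}$ witness for the first inclusion, and the back-and-forth for $\acl_{\CN_2}(A)=\cl(A)$) that the paper leaves as ``clear'' or implicit.
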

\begin{proof}
By Fact \ref{character_rosy} and Theorem \ref{wei_M_2}, it is enough to show that the algebraic closure in the home sort gives a trivial pregeometry in $\CM_2$. For any $A\subset M$, it is clear that $\cl(A)\subset \acl_{\CM_2}(A)$. By the way, from Theorem \ref{wei_N_2}, $\acl_{\CN_2}(A)=\cl(A)$. Since $\CM_2$ is a reduct of $\CN_2$, $\acl_{\CM_2}(A)\subset \acl_{\CN_2}(A)$, and thus $\cl(A)=\acl_{\CM_2}(A)$. So, the algebraic closure in $\CM_2$ gives a trivial pregeometry.
\end{proof}
\noindent The theory of $\CM_2$ does not eliminate quantifier but there is only one $1$-type over $\acl^{eq}(\emptyset)(\neq \emptyset)$, $q_0(x)=\{x=x\}$.
\subsection{Computation of $H_1$ in $\CM_1$}
In the section 2, we see that the first homology groups are determined by end-point pairs of $1$-shells. In $\CM_1$, for a fixed $a\in M$, we observe that $S_1(a)$ looks like a circle with a rotation. From this observation, we compute the first homology group of $p_0$ in $\CM_1$ :
\begin{theorem}\label{h1_in_M_1}
In $\CM_1$, the first homology group of $p_0$ is isomorphic to $\BR/\BZ$.
\end{theorem}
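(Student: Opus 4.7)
The plan is to build a direct isomorphism $\Phi\colon H_1(p_0)\to \BR/\BZ$ using the angular distance on the saturated circle $M$. First I would extract the invariant: by the quantifier elimination of $\Th(\CM_1)$ (Theorem \ref{QE_M_1}), the $2$-type $\tp(a,b/A)$ over $A=\acl^{eq}(\emptyset)$ of any pair $(a,b)\in M^2$ is determined by the clockwise angular distance $d(a,b)\in\BR/\BZ$. For $d(a,b)=r\in \BQ/\BZ$ this is encoded by the quantifier-free formula $g_r(x)=y$; for irrational distance it is determined by the Dedekind cut in $\BQ/\BZ$ specified by the formulas $S(x,g_r(x),y)$ satisfied by $(a,b)$. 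In particular any two pairs in $M^2$ with the same angular distance are $\aut_A(\CC)$-conjugate, and $d$ descends to a well-defined function on $\aut_A(\CC)$-orbits of pairs.

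Next I would define $\Phi([a,b]):=d(a,b)$. Well-definedness on $H_1(p_0)$ follows from Theorem \ref{endpt_class} (the class of a $1$-shell depends only on the endpoint pair) combined with Corollary \ref{auto_action_triviality} and the $\aut_A(\CC)$-conjugacy statement of the previous paragraph. The map $\Phi$ is a homomorphism by Theorem \ref{endpt_gpstr}, because $[a,b]+[b,c]=[a,c]$ while angular distance is additive modulo $1$. Surjectivity is immediate from saturation: for any $\theta\in \BR/\BZ$ and any $a\in M$ there is $b\in M$ with $d(a,b)=\theta$, so $\Phi([a,b])=\theta$.

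The main difficulty is injectivity: showing that $[a,b]=0$ in $H_1(p_0)$ forces $d(a,b)=0$. Given a $2$-chain $\alpha$ whose boundary is a $1$-shell with endpoint pair $(a,b)$, I would reduce $\alpha$ via Fact \ref{supp=3} to a chain-walk $\alpha'=\sum_{i=0}^{2n}(-1)^i a_i$ of support $\{0,1,2\}$ and then apply Theorem \ref{chainwalk_representation} to extract the sequence $(d_0,\ldots,d_{2n+1})$ with $d_0=a$ and $d_{2i_0}=b$, together with the bijection $m$ pairing $\{d_{2i},d_{2i+1}\}$ with $\{d_{m(2i)},d_{m(2i)+1}\}$ via $d_{2i}d_{2i+1}\equiv_A d_{m(2i)+1}d_{m(2i)}$. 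By Step~1 the orientation reversal in each of these $A$-conjugacies yields $d(d_{2i},d_{2i+1})=-d(d_{m(2i)},d_{m(2i)+1})$ in $\BR/\BZ$, so the signed sum $\sum_{i\neq i_0}\pm d(d_{2i},d_{2i+1})$ vanishes, while the single unpaired edge $d_{2i_0}d_{2i_0+1}\equiv_A a'c$ contributes the telescoping total $d(a,b)$. Hence $d(a,b)=0$, which completes the injectivity proof and establishes $H_1(p_0)\cong \BR/\BZ$.

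The step I expect to be the main obstacle is the sign and telescoping bookkeeping in the chain-walk computation: one must carefully track how the signed angular contributions of the successive simplices $a_i$ accumulate along the directed walk and how the bijection $m$ produces the required cancellations. Some of this is forced by the requirement that each triple $\{d_{2i},d_{2i+1},b\}$ be independent over $A$ (so that the corresponding angular distances are all irrational and the signed $\BR/\BZ$-arithmetic is well-behaved), and once the bookkeeping is set up correctly, the identity $\Phi\circ\partial=0$ follows from the combinatorics of Definition \ref{chainwalk}.
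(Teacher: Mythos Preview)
Your overall strategy matches the paper's, but there is a real gap. The claim that $d(a,b)\in\BR/\BZ$ determines $\tp(a,b/A)$ is false on the saturated circle: for each rational $r$ there are two distinct types of independent pairs with standard-part distance $r$, namely those with $b$ infinitesimally clockwise past $g_r(a)$ and those infinitesimally before it (in the paper's notation, $\sdist(a,b)=r+\epsilon$ versus $r-\epsilon$). Automorphisms of $\CM_1$ preserve the ternary orientation $S$, so these two types are not $\aut_A(\CC)$-conjugate and your conjugacy step fails. Your side remark that independence forces the angular distances to be irrational is likewise incorrect: independence only rules out $b=g_r(a)$, not the two infinitesimal cases.

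This matters because you have the two logical directions swapped. What you call ``injectivity'' --- the chain-walk computation showing $[a,b]=0\Rightarrow d(a,b)=0$ --- is actually the well-definedness of $\Phi$, and that part is essentially correct and parallels the $(\Rightarrow)$ direction of the paper's Theorem~\ref{NSfor[s]=0inM}. The genuine injectivity of $\Phi$ is the converse, $d(a,b)=0\Rightarrow[a,b]=0$, and this is what your conjugacy argument was meant to supply but cannot: conjugacy only transports $[a,b]$ between pairs of the same type and never produces a pair with $[a,b]=0$ known in advance, so you have no base case. The paper handles this (the $(\Leftarrow)$ direction of Theorem~\ref{NSfor[s]=0inM}) by showing that if $a,a'$ are infinitesimally close one can find $d\in M$ independent from $aa'$ with $a\equiv_{\cl(d)}a'$; since $\cl(d)$ is a model this yields $a\equiv^L_A a'$, hence $[a,a']=0$ by Fact~\ref{Lascar_zero}. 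You need an argument of this kind --- not a conjugacy argument --- to finish.
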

We start with defining a distance-like notion between two points on $M$. For a subset $A$ in $\BR$, we denote $A_{\BQ}$ for $A\cap \BQ$. 
\begin{definition}\label{distance}
Let $a,b\in M$ be two elements. We define the \em{$S$-distance} of $b$ from $a$, denoted by $\sdist(a,b)$ as follows : For $r\in \BQ$ and $s<t\in [0,1)_{\BQ}$,

\be 
	\item $\sdist(a,b)=r$ if $b=g_r(a)$;
	\item $s<\sdist(a,b)<1$ if $\CM_1 \models S(g_s(a),b,a)$;	
	\item $0<\sdist(a,b)<t$ if $\CM_1 \models S(a,b,g_t(a))$; and
	\item $s<\sdist(a,b)<t$ if $\CM_1 \models S(g_s(a),b,g_r(a))$.
	
\ee

\noindent For $r\in [0,1)\setminus \BQ$, we write $\sdist(a,b)=r$ if for $s<t\in [0,1)_{\BQ}$ and $s<r<t$, $s<\sdist(a,b)<t$. Let $r \in (0,1)_{\BQ}$. We write $\sdist(a,b)=r-\epsilon$ if for $s\in (0,1)_{\BQ}$ with $s<r$, $s<\sdist(a,b)<r$. We write $\sdist(a,b)=r+\epsilon$ if for $t\in (0,1)_{\BQ}$ with $r< t$, $r<\sdist(a,b)<t$. We write $\sdist(a,b)=\epsilon$ if for all $s\in (0,1)_{\BQ}$, $0<\sdist(a,b)<s$. We write $\sdist(a,b)=1-\epsilon$ if for all $s\in (0,1)_{\BQ}$, $s<\sdist(a,b)<1$.
\end{definition}
\noindent For a subset $A\subset \BQ$, we define $A^*:=A\cup\{x\pm\epsilon| x\in A \}$. This $S$-distance has the values in $[0,1)\cup[0,1)^*_{\BQ}\cup\{1-\epsilon\}$. In Appendix B, using Dedekind cut, we develop multivalued operations $+^*,\times^*,-^*$ to make $\BR\cup\BQ^*$ a ring-like structure.  Now we extend the values of $S$-distance to $\BR\cup\BQ^*$. Since $g_k=\id$ for all $k\in \BZ$, we write $\sdist(a,b)=r$ for $r\in \BR\cup\BQ^*$ if $\sdist(a,b)=r'$ where $r'$ is the unique number in $[0,1)\cup [0,1)_{\BQ}^*$ such that $r\in r'+^*n$ for some $n\in \BZ$. Then this values depends only on the type of $(a,b)$, that is, for $a_0,a_1,b_0,b_1\in M$, if $a_0b_0\equiv a_1b_1$, then $\sdist(a_0,b_0)=\sdist(a_1,b_1)$ of the values in $[0,1)\cup[0,1)_{\BQ}^*$. Then the following fact is easily verified :
\begin{fact}\label{direct-distance}
Let $a,b,c\in M$.
\be 
	\item $\sdist(b,a)=1-^*\bsdist(a,b)$.
	\item $\sdist(a,c)=\bsdist(a,b)+^*\bsdist(b,c)$ modulo $\BZ\cup \BZ^*$, that is, $\sdist(a,b)+^*\sdist(b,c)-^*\sdist(a,c)\subset \BZ^* $.
\ee
\noindent By (1), $\sdist$ is not symmetric, that is, for $a,b\in M$, $\sdist(a,b)\neq\sdist(b,a)$ and so it is called a directed distance. 
\end{fact}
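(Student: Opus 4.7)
My plan is to reduce both identities to the cyclic ordering properties of the ternary relation $S$ together with the group-theoretic identities $g_r\circ g_s=g_{r+s}$ (with $g_1=\id$), proceeding by case analysis on whether the relevant $S$-distances are rational, irrational, or of the ``epsilon'' form $r\pm\epsilon$.

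For (1), the rational case is direct: if $\sdist(a,b)=r\in[0,1)_{\BQ}$, then $b=g_r(a)$, hence $a=g_{1-r}(b)$ and $\sdist(b,a)=1-r$. For irrational $r$, I would use a squeeze: given any rationals $s<t$ in $[0,1)$ with $s<r<t$, the relation $S(g_s(a),b,g_t(a))$ is equivalent --- by applying the rotation $g_{1-s}$ (which preserves $S$) and using the cyclic symmetry $S(x,y,z)\Leftrightarrow S(y,z,x)$ --- to $S(g_{1-t}(b),a,g_{1-s}(b))$, whence $1-t<\sdist(b,a)<1-s$. Since this holds for every such pair, $\sdist(b,a)=1-r$. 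The $\epsilon$-cases proceed the same way: if $\sdist(a,b)=s-\epsilon$, the defining condition gives $s'<\sdist(a,b)<s$ for every $s'\in (0,s)_{\BQ}$, which via the same translation yields $1-s<\sdist(b,a)<1-s'$ for every such $s'$, forcing $\sdist(b,a)=(1-s)+\epsilon$; this is exactly $1-^*(s-\epsilon)$ under the conventions of Appendix B.

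For (2), write $\alpha=\sdist(a,b)$ and $\beta=\sdist(b,c)$. When both are rational, $c=g_\beta(b)=g_{\alpha+\beta}(a)$, and since $g_1=\id$ we obtain $\sdist(a,c)\equiv\alpha+\beta\pmod{\BZ}$. When one or both are irrational or $\epsilon$-valued, I approximate by rational bounds $s_i<t_i$ with $S(g_{s_1}(a),b,g_{t_1}(a))$ and $S(g_{s_2}(b),c,g_{t_2}(b))$. Applying the rotation $g_{s_1}$ to the second relation transports it to a statement involving $g_{s_1+s_2}(a)$, $g_{s_1}(c)$, and $g_{s_1+t_2}(a)$; combined with the first relation via the cyclic-order properties of $S$, this pins $\sdist(a,c)$ between $s_1+s_2$ and $t_1+t_2$ modulo $\BZ$. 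Letting the approximations tighten yields $\sdist(a,c)\in\alpha+^*\beta$ modulo $\BZ\cup\BZ^*$, which is the second identity.

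The main obstacle is the careful bookkeeping when the arithmetic crosses an integer boundary or when one of the $\epsilon$-valued cases is involved: one must verify that the multivalued operations $+^*$ and $-^*$ of Appendix B produce exactly the set of values that the $S$-squeeze allows. This reduces to checking a handful of representative boundary configurations --- for example, $\alpha\in\BQ$ with $\beta=r\pm\epsilon$, or the case when $\alpha+\beta$ passes through $1$ --- after which the general statement follows uniformly by the same squeeze argument.
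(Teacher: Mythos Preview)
The paper presents this as a Fact that ``is easily verified'' and gives no proof, so there is no argument in the paper to compare against; your case analysis via the cyclic-order properties of $S$ and the group law $g_r\circ g_s=g_{r+s}$ is the natural verification and is correct in outline.

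One small slip in your sketch of (2): applying the rotation $g_{s_1}$ to the relation $S(g_{s_2}(b),c,g_{t_2}(b))$ yields $S(g_{s_1+s_2}(b),\,g_{s_1}(c),\,g_{s_1+t_2}(b))$, still in terms of $b$, not $a$ as you wrote. What actually makes the squeeze work is the converse move: apply $g_{s_2}$ (respectively $g_{t_2}$) to the \emph{first} relation $S(g_{s_1}(a),b,g_{t_1}(a))$ to locate $g_{s_2}(b)$ and $g_{t_2}(b)$ within arcs bounded by $g_{s_1+s_2}(a),\,g_{t_1+s_2}(a)$ and $g_{s_1+t_2}(a),\,g_{t_1+t_2}(a)$ respectively; then the second relation forces $c$ into the arc from $g_{s_1+s_2}(a)$ to $g_{t_1+t_2}(a)$ (modulo the wrap-around bookkeeping you already flag). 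With that correction your squeeze argument is sound, and the boundary and $\epsilon$-cases behave exactly as the multivalued rules in Appendix~B dictate.
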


\noindent Now we assign each $1$-simplex $f$ a value $n_f$ in $\BR\cup\BQ^*$ as follows : There are $a,b\in M$ such that $[a,b]=f$, and we define $n_f$ as $\sdist(a,b)$. Then $n_f$ is well-define, that is, it does not depend on the choice of $a,b$ because if $a_i,b_i\in M$ satisfy $[a_0,b_0]=[a_1,b_1]=f$, then $a_0 b_0\equiv a_1 b_1$ and $\sdist(a_0,b_0)=\sdist(a_1,b_1)$. We also assign each $1$-shell $s=f_{01}+f_{12}-f_{02}$ to a multivalue $n_s$ in $\BR\cup\BQ^*$ as follows : $n_s=n_{f_{01}}+^*n_{f_{12}}-^* n_{f_{02}}$. This value is also related with the distance of end points. Let $(a,a')$ be an endpoint pair of $s$, then $\sdist(a,a')=n_s$ modulo $\BZ^*$. Using this assignment of 1-shells, we give a necessary and sufficient condition for a $1$-shell to be a boundary of a $2$-chain :

\begin{theorem}\label{NSfor[s]=0inM}
A 1-shell $s=f_{12}-f_{02}+f_{01}$ is a boundary of a 2-chain in $p$ if and only if 
\begin{center}
$n_s=n_{01}+^*n_{12}+^*n_{20}\subset\BZ^*$,
\end{center}
where $n_{01}=n_{f_{01}},\ n_{12}=n_{f_{12}},\ n_{20}=-^* n_{f_{02}}$. Moreover it is equivalent to that the two end points of $s$ are Lascar equivalent over $\emptyset$. 
\end{theorem}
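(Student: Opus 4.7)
The plan is to reduce everything to the single identity $n_s \equiv \sdist(a,a') \pmod{\BZ^*}$ for any representation $(a,b,c,a')$ of $s$, which immediately links $n_s$ to both the endpoint distance and, via Lascar equivalence, to the homology class. First I compute: $n_s = \sdist(a,b) +^* \sdist(b,c) -^* \sdist(a',c)$, and two applications of Fact \ref{direct-distance}(2) plus one rewrite via Fact \ref{direct-distance}(1) (giving $-^*\sdist(a',c) \equiv \sdist(c,a') \pmod{\BZ^*}$) chain to $n_s \equiv \sdist(a,c) +^* \sdist(c,a') \equiv \sdist(a,a') \pmod{\BZ^*}$. Since the only values in the range $[0,1)\cup [0,1)_{\BQ}^* \cup \{1-\epsilon\}$ of $\sdist$ that lie in $\BZ^*$ are $0$, $\epsilon$, and $1-\epsilon$, the condition $n_s \subset \BZ^*$ is equivalent to $a=a'$ or $a,a'$ being infinitesimally close on the circle.

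Next I characterize Lascar equivalence over $\emptyset$ on $p_0(\CC)$ as precisely this infinitesimal relation. Every rational rotation $g_r$ lies in the language of $\CM_1$, so any model $M\subseteq\CC$ is closed under the $g_r$ and hence dense in $\CC$, and every $\sigma\in\Aut(\CC)$ commutes with the $g_r$. Thus any $\sigma\in\Aut_M(\CC)$ can only move a point within its Dedekind gap over $M$; conversely, by saturation together with density of $M$, any two elements in the same gap are conjugate by some such $\sigma$. Hence $a\equiv^L_\emptyset a'$ if and only if $\sdist(a,a')\in\BZ^*$. Combined with the previous paragraph, this proves the moreover clause, and the easy direction of the main equivalence follows: if $n_s\subset\BZ^*$, then $a\equiv^L_\emptyset a'$, and Fact \ref{Lascar_zero} gives $[s]=[a,a']=0$ in $H_1(p_0)$, so $s$ is a boundary.

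For the other direction, assume $s=\Bd\alpha$. By Fact \ref{supp=3} I may replace $\alpha$ with a chain-walk of support $\{0,1,2\}$, $\alpha=\sum_{i=0}^{2k}(-1)^i a_i$, where each $a_i$ is a $2$-simplex on $\{0,1,2\}$ whose three vertices lie on the circle. Applying the computation of the first paragraph to each triangle shows $n(\Bd a_i)\subset\BZ^*$. The chain-walk condition forces adjacent pairs $\epsilon_i a_i$, $\epsilon_{i+1}a_{i+1}$ to share a cancelling internal face, so in the telescoping sum $n_s=\sum(-1)^i n(\Bd a_i)$ (computed in the $+^*,-^*$-arithmetic) the internal $\sdist$-labels cancel, leaving $n_s$ as a combination of $\BZ^*$-values, hence $n_s\subset\BZ^*$. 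The main technical obstacle is precisely controlling the multivalued $+^*,-^*$-arithmetic here: each triangle-sum is only determined up to $\pm\epsilon$, and one must verify that these perturbations do not accumulate out of $\BZ^*$. The cleanest route is to carry out the telescoping in the quotient by $\BZ^*$ (using the ring-like structure developed in Appendix B), where these identities become genuine equalities and cancellation of shared faces is unambiguous.
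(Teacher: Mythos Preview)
Your argument is correct and is organized quite differently from the paper's.

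For the backward direction, the paper does \emph{not} appeal to Fact~\ref{Lascar_zero}. Instead it takes a representation $(a,b,c,a')$, checks that $\sdist(a,a')\in\{\epsilon,1-\epsilon\}$, explicitly realizes a partial type to produce $d$ with $a\equiv_d a'$ and $d\indo abca'$, and then writes down by hand a $2$-chain $a_0+a_1-a_2$ on support $\{0,1,2,3\}$ whose boundary is $s$. Only afterward does it read off the Lascar equivalence from this construction (noting $\cl(d)$ is a model). Your route --- establish the Lascar characterization first, then invoke Fact~\ref{Lascar_zero} --- is shorter and more conceptual, but note that your ``conversely'' for the Lascar characterization is not quite complete as written: you argue that two points in the same gap over a given model $M$ are conjugate over $M$, but you still owe the existence of a model $M$ over which the infinitesimally close $a,a'$ lie in the same gap. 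This is easy to supply (take $d$ with $\sdist(a,d)$ irrational, so that no $g_r(d)$ separates $a$ from $a'$, and use QE to see that $\cl(d)\preceq\CC$), and is essentially what the paper's explicit construction of $d$ accomplishes.

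For the forward direction, the paper unwinds the chain-walk via the sequence $(d_i)$ from Theorem~\ref{chainwalk_representation} and performs the telescoping $\sdist$-computation directly in the multivalued $+^*$-arithmetic. Your proposal to pass to the quotient $(\BR\cup\BQ^*)/\BZ^*\cong\BR/\BZ$ is the right move: there $f\mapsto[\sdist(a,b)]$ extends to a genuine homomorphism $C_1(p)\to\BR/\BZ$, and since each $\Bd a_i$ has endpoint pair $(d,d)$ it lies in the kernel. This is cleaner than the paper's computation, and in fact renders the chain-walk reduction (Fact~\ref{supp=3}) unnecessary --- the argument works verbatim for an arbitrary $2$-chain $\alpha$.
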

\begin{proof}
($\Rightarrow$) Let $\alpha$ be a 2-chain having the boundary $s$. By Fact \ref{supp=3}, we may assume $\alpha=\sum_{i=0}^{2n}\limits (-1)^i a_i$ be a chain-walk from $f_{01}$ to $-f_{02}$ with $\supp(\alpha)=\{0,1,2\}$. Let $[3]=\{0,1,2\}$. From Theorem \ref{chainwalk_representation}, there are independent elements $d_0,d_1,\cdots,d_{2n+2}$ such that 

\begin{itemize}
	\item $a_i([3])\equiv [d_0,d_{2i+1},d_{2i+2}]_{[3]}$ if $i$ is even, and $a_i([3])=[d_0,d_{2i+2},d_{2i+1}]_{[3]}$ if $i$ is odd;
	\item For some even number $0 \le i_0\le 2n$, $[d_{2i_0+1},d_{2i_0+2}]_{\{1,2\}}=f_{12}(\{1,2,\})$; and
	\item For each even number $0 \le j_o\neq i_0 \le 2n$, there is an odd number $0\le j_1\le 2n$ such that $[d_{2j_0+1},d_{2j_0+2}]_{\{1,2\}}=[d_{2j_1+2},d_{2j_1+1}]_{\{1,2\}}$.
\end{itemize}

\noindent Then $\sdist(d_1,d_0)+^* \sdist(d_0,d_{2n+2})=-^* n_{01}-^* n_{20}$ and by Fact \ref{direct-distance} (1), $\sdist(d_1,d_2)+^* \sdist(d_2,d_3)+^*\cdots+^* \sdist(d_{2n+1},d_{2n+2})=n+^* n_{12}$. By Fact \ref{direct-distance} (2), $\sdist(d_1,d_{dn+2})\in (-^* n_{01}-^* n_{20})\cap (n+^* n_{12})$. Since $\{0\}^*=(-^* n_{01}-^* n_{20})+^*(n_{01}+^*n_{20})$ and $(n+^* n_{12})+^*(n_{01}+^*n_{20})=n+^*(n_{01}+^*n_{12}+^*n_{20})$, these two equations imply $n+^*(n_{01}+^*n_{12}+^*n_{20})\subset \{0\}^*$. Therefore, $n_{01}+^*n_{12}+^*n_{20}\subset \{0\}^*-^* \{n\}^*=\{-n\}^*$ for $n\in \BN\subset \BZ$.\\

($\Leftarrow$) Suppose $n_{01}+^*n_{12}+^*n_{20}\subset\{n\}^*$ for some $n\in \BZ$. Then there are independent elements $a,b,c,a'$ such that
$$[a,b]_{\{0,1\}}=f_{01}(\{0,1\}),\ [b,c]_{\{1,2\}}=f_{12}(\{1,2\}),\ [a',c]_{\{0,2\}}=f_{02}(\{0,2\}).$$
So, $\sdist(a,b)=n_{01},\ \sdist(b,c)=n_{12},\ \sdist(c,a)=n_{20},$ and $\sdist(a,a')\in n_{01}+^*n_{12}+^*n_{20}$. Thus $\sdist(a,a')\in \{n\}^*$ and $\sdist(a,a')\in \{0\}^*$. Since $\{a,a'\}$ is independent, $\sdist(a,a')\in \{0\}^*\setminus \{0\}$, that is, $\sdist(a,a')=\epsilon$ or $\sdist(a,a')=1-\epsilon$.

We will find $d\in M$ such that $a\equiv_d a'$ and $d\indo abca'$. Consdier a partial type $\Sigma(x)=\{s<\sdist(x,a)<t\leftrightarrow s<\sdist(x,a')<t\}_{s<t\in [0,1]}$. Consider finitely many pairs $(s_i,t_i)$ with $s_i<t_i$ and a formula $$\bigwedge (s_i<\sdist(x,a)<t_i\leftrightarrow s_i<\sdist(x,a')<t_i).$$
We may assume $s_i \le s_0 < t_0 \le t_i$. It is enough to show that
$$s_0<\sdist(x,a)<t_0\leftrightarrow s_0<\sdist(x,a')<t_0$$
is satisfiable. Suppose $s_0<\sdist(x,a)<t_0$ is satisfiable. Then there is a pair $(s,t)$ such that $s_0<s<t<t_0$ and $s<\sdist(x,a)<t$ is satisfiable. Let $e\in M$ be independent from $a$ such that $s<\sdist(e,a)<t$ holds. Since $\sdist(a,a')\in \{0\}^*\setminus \{0\}$, there is is a pair $(s',t')$ such that $s'<t'$, $s_0<s+s'<t+t'<t_0$, and $s'<\sdist(a,a')<t'$. Then, $s<\sdist(e,a)<t$ and $s'<\sdist(a,a')<t'$ imply $s+s'<\sdist(e,a')<t+t'$. Since
$s_0<s+s'<t+t'<t_0$, $s_0<\sdist(e,a')<t_0$ and $s_0<\sdist(x,a')<t_0$ is satisfiable. By the same way, $s_0<\sdist(x,a')<t_0\rightarrow s_0<\sdist(x,a)<t_0$.

Therefore, there is $d\in M$ such that $\Sigma(d)$ and $\sdist(d,a)=\sdist(d,a')$. Moreover we may assume that $\{a,b,c,a',d\}$ is independent by taking $d\indo_{aa'}bc$. Then, there is a 2-chain $\alpha=a_0+a_1-a_2$, where
\begin{itemize}
	\item $\supp(a_0)=\{0,1,3\}$, $\supp(a_1)=\{1,2,3\}$, and $\supp(a_2)=\{0,2,3\}$;
	\item $a_0(\{0,1,3\})=[a ,b,d]_{\{0,1,3\}}$, $a_1(\{1,2,3\})=[b,c,d]_{\{1,2,3\}}$, and $a_2(\{0,2,3\})=[a' ,c,d]_{\{0,2,3\}}$;
	\item $a_0^{01}=f_{01}$, $a_1^{12}=f_{12}$, and $a_2^{02}=f_{02}$; and
	\item $a_0^{03}=a_2^{03}$, $a_0^{13}=a_1^{13}$, and $a_1^{23}=a_2^{23}$.
\end{itemize}

\noindent Then $\Bd\alpha=f_{0,1}+f_{1,2}-f_{02}+(a_2^{03}-a_0^{03})$ and $\Bd \alpha=f_{0,1}+f_{1,2}-f_{02}$.\\

Now we show moreover part. Let $a,a'$ be end points of $s$. If $a\equiv^L a'$, then $s$ is a boundary of $2$-chain and $n_s\subset \{n\}^*$ for some $n\in\BZ$. Conversely, we assume that $n_s\subset \{n\}^*$ for some $n\in \BZ$. In the proof of right-to-left, we found $d\in M$ such that $a\equiv_d a'$. Consider a substructure generated by $d$. $\cl(d)=\dcl(c)=\acl(c)$. Then $a\equiv_{cl(d)} a'$ and $a\equiv^L a'$.
\end{proof}
Now we are ready to prove Theorem \ref{h1_in_M_1}. Define a map $\Phi:\ H_1(p_0)\rightarrow (\BR\cup\BQ^*)/\BZ^*$ as sending $[s]$ into $n_s+^*\BZ^*$, and $(\BR\cup\BQ^*)/\BZ^*\cong \BR/\BZ$ in Appendix B. It is easy to see this map is surjective. Since for an endpoint pair $(a,b)$ of $s$, $n_s+^*\BZ^*=\sdist(a,b)+^*\BZ^*$, this map $\Phi$ depends on the endpoint pairs of $1$-shells. By Theorem \ref{endpt_gpstr}, given $1$-shells $s_0$ and $s_1$ and endpoint pairs $(a,b)$ and $(b,c)$ with respect to $s_0$ and $s_1$, there is an $1$-shell $s$ such that $[s]=[s_0]+[s_1]$ and $(a,c)$ is an endpoint pair of $s$, thus this map $\Phi$ is a group homomorphism. Moreover by Theorem \ref{NSfor[s]=0inM}, it is injective, and therefore it is an isomorphism.

Therefore, we show that the first homology group in $p_0$ is isomorphic to $\BR/\BZ$, and it is interesting that from Theorem \ref{NSfor[s]=0inM}, this first homology group is exactly isomorphic to the Lascar group $\gal_L (\CM_1;\emptyset)$.
\subsection{Computation of $H_1$ in $\CM_2$}
In this subsection, we compute the first homology group of $q_0$. Since $q_0$ is over $\acl^{eq}(\emptyset)$, we work in $\CM_2^{eq}$ with constant elements in $\acl^{eq}(\emptyset)$. Since each elements in $M^n/E_n$ is already in $\acl^{eq}(\emptyset)$, and by Theorem \ref{wei_N_2}, we may work in $(\CM_2')^{eq}=\CN_2^{eq}$. But already noted in the proof of \ref{wei_N_2} (2), the ternary relation $S(x,y,z)$ is definable in $\CN_2$ and thus we work in $\CM_1^{eq}$. So, by the previous subsection, the first homology group of $q_0$ is same with one of $p_0$ and it is isomorphic to $\BR/\BZ$, which is also Lascar group over $\acl^{eq}(\emptyset)$ in $\CM_2$.\\

\medskip

We conjecture that there are only automorphisms described in Theorem \ref{kernel_canonical_epi} in the kernel of the canonical epimorphism in Theorem \ref{canonical_epi}. 
\begin{question}\label{qeustion_kernel}
Let $T=T^{eq}$ be a rosy theory, and $\CC\models T$. For $A=\acl(A)$, for a strong type $p$ over $A$, let $\Psi:\ \aut_A(\CC)\rightarrow H_1(p)$ be a canonical epimorphism. Then, is the kernel of $\Psi$ exactly generated by automorphisms in the following :
\begin{enumerate}
\item $\aut_{\acl(aA)}(\CC)$ for $a\models p$;
\item $\autf(\CC)$; and
\item $(\Psi')^{-1}([G,G])$, where $G$ and $\Psi'$ are in Theorem \ref{kernel_canonical_epi},
\end{enumerate}
so, is $H_1(p)\cong (\gal_L(\CC;A)/<\bar{\sigma}:\ \sigma\in \aut_{\acl(Aa)},\ a\models p>)^{ab}$? Furthermore if any algebraic closure is again a substructure, then is $H_1(p)\cong \gal_L(\CC;A)^{ab}$? 
\end{question}
\noindent Fortunately, the answer for Question \ref{qeustion_kernel} is yes for known examples in \cite{GKK}\cite{KKL} and our two examples.

\section{Appendix}
\subsection{Appendix A} We show the possible number of bounded type-definable equivalence classes on a strong type is $1$ or at least $2^{\aleph_0}$. Let $T(=T^{eq})$ be any theory of a language $\CL$ and let $\CC$ be a monster model of $T$. Fix a small subset, $A=\acl(A)$ and choose a strong type $p(\x)$ over $A$(with $\x$ of possibly infinite length). We shall denote $\x$ as $x$ conventionally.
\begin{theorem}
Let $E(x,y)$ be a bounded $A$-type-definable equivalence relation on $p(x)$ and denote $p/E$ for the set of $E$-classes on $p$. Then, \begin{center}
$|p/E|=1$ or $\ge 2^{\aleph_0}$.
\end{center}
\end{theorem}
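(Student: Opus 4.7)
The plan is to realize $p/E$ as a compact Hausdorff space via the logic topology and then use the transitive action of $\aut_A(\CC)$ by homeomorphisms to force the dichotomy.

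\textbf{Setting up the topology.} First I will topologize $p/E$ by declaring $C\subseteq p/E$ closed iff its preimage in $p(\CC)$ under the quotient map is $A$-type-definable. Because $E$ is $A$-type-definable and bounded, this yields a compact Hausdorff topology on $p/E$---a standard fact which I will invoke without reproof. Any $\sigma\in\aut_A(\CC)$ preserves $A$-type-definability, so the induced bijection of $p/E$ is a homeomorphism. The action of $\aut_A(\CC)$ on $p/E$ is moreover transitive, since $p\in S(A)$ with $A=\acl(A)$ means $\aut_A(\CC)$ is already transitive on $p(\CC)$.

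\textbf{Ruling out a non-trivial finite quotient.} Next I will suppose $p/E$ is finite and derive $|p/E|=1$. Being finite and Hausdorff, $p/E$ is discrete, so each singleton $\{[a]_E\}$ is closed. By the definition of the logic topology this says the class $[a]_E\subseteq p(\CC)$ is $A$-type-definable, hence $\aut_A(\CC)$-invariant as a set. Transitivity of the action on $p/E$ then forces $|p/E|=1$.

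\textbf{Producing a continuum of classes when $p/E$ is infinite.} When $p/E$ is infinite, I will first observe that $p/E$ has no isolated points: otherwise transitivity of the homeomorphism action would make every point isolated, so $p/E$ would be discrete and compact, hence finite---contradicting infiniteness. So $p/E$ will be a non-empty perfect compact Hausdorff space, and a standard Cantor-scheme construction applies: for each $s$ in the binary tree $2^{<\omega}$ with extensions $s0$ and $s1$, I will recursively choose a non-empty open set $U_s\subseteq p/E$ so that $\overline{U_{s0}}$ and $\overline{U_{s1}}$ are disjoint and contained in $U_s$ (available because $p/E$ is perfect and compact Hausdorff spaces are regular). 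For each $\eta\in 2^\omega$ compactness will yield a point in $\bigcap_n \overline{U_{\eta\restriction n}}$, and distinct branches will yield disjoint intersections, giving $|p/E|\ge 2^{\aleph_0}$.

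\textbf{Main obstacle.} The only genuinely model-theoretic step will be the finite case: a finite Hausdorff quotient is forced to be discrete, which upgrades each individual class to an $A$-type-definable---hence $\aut_A(\CC)$-invariant---set, clashing with transitivity of the action. Everything else is routine: the existence of the logic topology as a compact Hausdorff topology, and the Cantor-scheme construction inside a perfect compact Hausdorff space.
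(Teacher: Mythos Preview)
Your infinite case is fine, but the finite case has a real gap, and it originates in your definition of the logic topology. You declare $C\subseteq p/E$ closed when its preimage in $p(\CC)$ is \emph{$A$-type-definable}, and then invoke the ``standard fact'' that this is compact Hausdorff. But your topology is actually the indiscrete one: $p$ is a complete type over $A$, so every $A$-type-definable subset of $p(\CC)$ is $\aut_A(\CC)$-invariant, and since $\aut_A(\CC)$ acts transitively on $p(\CC)$ any such subset is empty or all of $p(\CC)$. Hence only $\emptyset$ and $p/E$ are closed, and the space is Hausdorff only when $|p/E|\le 1$---precisely what you are trying to prove. The genuine standard fact uses type-definability over a small \emph{model} $M\supseteq A$; that topology is compact Hausdorff, but then a closed singleton only gives you an $M$-type-definable class, not an $A$-type-definable one, and the $\aut_A(\CC)$-invariance your argument needs evaporates. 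Transitivity of a group action on a finite discrete space places no constraint on the size of the space.

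With the corrected (model-based) logic topology your infinite case goes through and is a clean repackaging of what the paper does by hand: the paper builds a binary tree of formulas $\phi_j(x,a_k)$ directly via two compactness claims, whereas you observe that transitivity of the homeomorphic $\aut_A(\CC)$-action forbids isolated points, so $p/E$ is perfect compact Hausdorff and a Cantor scheme gives $\ge 2^{\aleph_0}$. For the finite case, however, a different idea is required. The paper uses compactness to show that $E$ is relatively definable on $p$ over $A$, and then that $E$ agrees on $p$ with an $A$-definable \emph{finite} equivalence relation $E'$; since $p$ is a strong type over $A=\acl(A)$, all of $p$ lies in a single $E'$-class, so $|p/E|=1$. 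This is exactly where the strong-type hypothesis earns its keep, and it does not fall out of the topology alone.
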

\begin{proof}
For a convention, we assume $A=\emptyset$. We divide two cases that $p/E$ is finite and $p/E$ is infinite.\\

Case 1. $p/E$ is finite : Suppose $p/E$ is finite. Let $a_0,\cdots, a_n\models p$ be representatives of all distinct classes in $p/E$, and let $\a=(a_0,a_1,\ldots ,a_n)$. At first, we show that $E$ is relatively definable on $p$. Consider two type-definable formula $E(x,a_0)$ and $\bigvee_{i>0}E(x,a_i)$ partitioning $p$, and by compactness, $p(x)\models E(x,a_0)\leftrightarrow \phi(x,a_0)$ for some formula $\phi(x,z)$ such that $E(x,a_0)\models \phi(x;a_0)$. Since $a_0\equiv a_i$, $p(x)\models E(x,a_i)\leftrightarrow \phi(x,a_i)$ for all $i\le n$. Thus, $p(x)\wedge p(y)\models E(x,y)\leftrightarrow \psi(x,y;\a)$, where $\psi(x,y;\z)=\bigvee_i [\phi(x,z_i)\wedge\bigvee_{j\neq i}\phi(y,z_j )]$. Since $E$ is invariant, $p(x)\wedge p(y)\wedge \psi(x,y;\z)\wedge \tp(\a)(\z)\models \psi(x,y;\a)(\leftrightarrow E(x,y))$. By compactness, there is a formula $\psi'(\z)$ in $\tp(\a)(\z)$ such that $p(x)\wedge p(y)\wedge \psi(x,y;\z)\wedge \psi'(\z)\models \psi(x,y;\a)$. Take $\theta(x,y)\equiv \exists \z (\psi'(\z)\wedge \psi(x,y;\z))$. Then $p(x)\wedge p(y)\models \theta(x,y)\leftrightarrow \psi(x,y;\a)$. Therefore $E$ is relatively definable on $p$ by the formula $\theta$. Moreover, we may assume $\theta(x,y)$ is a reflexive and symmetric relation by taking $x=y\vee (\theta(x,y)\wedge \theta(y,x))$. 


Next, we find a finite $\emptyset$-definable equivalence relation $E'$ such that $p(x)\wedge p(y)\models E(x,y)\leftrightarrow E'(x,y)$. Since $E$ is an equivalence relation, 
$$\begin{array}{c c l}
p(x)\wedge p(y)\wedge p(z)&\models& \bigvee_i\limits \theta(x,a_i)\wedge\bigvee_i\limits \theta(y,a_i)\wedge\bigvee_i\limits \theta(z,a_i)\\
&\wedge& \bigwedge_i \limits (\theta(x,a_i)\rightarrow \bigwedge_{i\neq j}\limits \neg \theta(x,a_j))\\
&\wedge&\bigwedge_i \limits (\theta(y,a_i)\rightarrow \bigwedge_{i\neq j}\limits \neg \theta(y,a_j))\\
&\wedge&\bigwedge_i \limits (\theta(z,a_i)\rightarrow \bigwedge_{i\neq j}\limits \neg \theta(z,a_j)) \\
&\wedge&(\theta(x,y)\wedge\theta(y,z)\rightarrow \theta(x,z)).\ (*)
\end{array}$$
Again by compactness, there is $\delta(x)\in p(x)$ such that $$\delta(x)\wedge\delta(y)\wedge\delta(z)\models (*).$$ Define a definable equivalence relation $E'(x,y)\equiv [\neg \delta(x)\wedge \neg \delta(y)]\vee [\delta(x)\wedge \delta(y)\wedge \forall z(\delta(z)\rightarrow (\theta(z,x)\leftrightarrow \theta(z,y)))]$.
\begin{claim}\label{fin_equiv_relation}
The equivalence relation $E'$ is finite.
\end{claim}
\begin{proof}
First, $\neg\delta(x)$ is a $E'$-class. We show that on $\delta$, the $E'$-classes are of the form of $\theta(x,a_i)\wedge \delta(x)$. By the choice of $\delta$, it is partitioned by $\{\theta(x,a_i)\wedge \delta(x)\}_{i\le n}$.\\

1) We show that $\models \theta(x,a_i)\wedge \delta(x)\rightarrow E'(x,a_i)$ : Choose $b\models \theta(x,a_i)\wedge \delta(x)$. Take $c\models \delta(x)\wedge \theta(x,a_i)$. Since $\theta$ is transitive on $\delta$ and $\theta(b,a_i)$ holds, $\theta(c,b)$ holds. Conversely, if $d\models \delta(x)\wedge \theta(x,b)$, then by transitivity of $\theta$ on $\delta$, $\theta(d,a_i)$ holds. Therefore, $E'(b,a_i)$ holds.\\

2) For $i\neq j$, $\neg E'(a_i,a_j)$  : Suppose for some $i\neq j$, $E'(a_i,a_j)$ holds. Then $\theta(a_i,a_j)$ holds but it is impossible since $a_i,a_j\models p$ and $\theta$ and $E$ are same on $p\times p$.\\

\noindent By 1) and 2), the $E'$-classes are of the form of $\theta(x,a_i)\wedge \delta(x)$ or $\neg\delta(x)$ and $E'$ is a finite equivalence relation.
\end{proof}

\noindent From the proof of Claim \ref{fin_equiv_relation}, $E'$ and $E$ are the same equivalence relation on $p\times p$. Since $E'$ is finite and $p$ is a strong type, $p/E=p/E'$ and there are only one $E$-class in $p$.\\

Case 2. $p/E$ is infinite : Suppose $p/E$ is infinite. Let $\kappa=|p/E|$. If $E$ is definable, then by compactness, $|p/E|\ge \kappa'$ for any $\kappa'<|\CC|$ and $E$ is not bounded. So $E$ is type-definable and $E(x,y)\equiv \bigwedge_{i<\lambda}\limits \phi_i(x,y)$, where $\phi_i(x,y)$ is a formula and $\lambda$ is an infinite cardinal. We may assume $\phi_i(x,y)$ is reflexive and symmetric by taking $x=y\vee(\phi_i(x,y)\wedge \phi_i(y,x))$ instead of $\phi_i(x,y)$ for each $i<\lambda$, and that for $i<j<\lambda$, $\models \phi_j(x,y)\rightarrow\phi_i(x,y)$ $(\dagger)$ by taking $\phi_j(x,y)\wedge\phi_i(x,y)$. Moreover, by compactness, we may assume that for $\models \exists z(\phi_{i+1}(x,z)\wedge \phi_{i+1}(z,y))\rightarrow \phi_i(x,y)$ $(\ddagger)$. Let $\{a_k\models p\}_{k<\kappa}$ be the set of representatives of $E$-classes.

\begin{claim}\label{inf_classes_in_phi}
For each $i<\lambda$ and $k<\kappa$, $\phi_i(x,a_k)(\CC)$ contains infinitely many $E$-classes.
\end{claim}
\begin{proof}
Fix $i<\lambda$. By compactness, there are finitely many $k_0<k_1<\cdots<k_n$ such that $p\models \bigvee_j \phi_i(x,a_{k_j})$. By Pigeonhole Principle, some $\phi_i(x,a_{k_l})$ contains infinitely many $a_k$'s. By $(\dagger)$ and $(\ddagger)$, $\phi_i(x,a_{k_l})$ contains infinitely many $E$-classes. Since $a_n\equiv a_m$ for $n,m<\kappa$ and $E$ is invariant, each $\phi_i(x,a_k)$ contains infinitely many $E$-classes.
\end{proof}

\begin{claim}\label{disj_phi_in_phi}
For each $i<\lambda$ and $k<\kappa$, there are $i<j<\lambda$ and $k_0,k_1<\kappa$ such that $$\models [(\phi_j(x,a_{k_0})\vee\phi_j(x,a_{k_0}))\rightarrow \phi_i(x,a_k)]\wedge [\neg \exists x (\phi_j(x,a_{k_0})\wedge \phi_j(x,a_{k_1}))].$$
\end{claim}
\begin{proof}
Fix $i<\lambda$ and $k<\kappa$. By Claim \ref{inf_classes_in_phi}, $\phi_i(x,a_k)$ contains infinitely many $E$-classes. Choose two $E$-classes in $\phi_i(x,a_k)$ and let $a_{k_0}$ and $a_{k_1}$ be representatives of two classes respectively. Since $E(x,a_{k_0})(\CC)$ and $E(x,a_{k_1})(\CC)$ are disjoint, by compactness, for some $j>i$, $\phi_j(x,a_{k_0})(\CC)$ and $\phi_j(x,a_{k_1})(\CC)$ are disjoint and we are done.
\end{proof}
\noindent From Claim \ref{inf_classes_in_phi}, \ref{disj_phi_in_phi} and the fact that the cofinality of $\lambda$ is at least $\aleph_0$, we get a binary tree $\CB :\ 2^{< \omega} \rightarrow \omega\times \kappa$ such that for each $b\in 2^{<\omega}$, $\CB(\stackrel\frown{b0})=(j,k_0)$ and $\CB(\stackrel\frown{b1})=(j,k_1)$ where if $\CB(b)=(i,k)$, then $j<\omega$ and $k_0,k_1<\kappa$ satisfies Claim \ref{disj_phi_in_phi} for $(i,k)$. Then for each $\tau \in 2^{\omega}$, we get a set of formula $\{\phi_{i(\tau\upharpoonright n )}(x,a_{k(\tau\upharpoonright n)})\}$, where $\CB(\tau\upharpoonright n)=(i(\tau\upharpoonright n ),k(\tau\upharpoonright n))$ for each $n\in \omega$. By the choice of $\CB$, for $\tau_0\neq \tau_1 \in 2^{\omega}$, $\bigcap_n \phi_{i(\tau_0\upharpoonright n )}(x,a_{k(\tau_0 \upharpoonright n)})(\CC)$ and $\bigcap_n \phi_{i(\tau_1 \upharpoonright n )}(x,a_{k(\tau_1\upharpoonright n)})(\CC)$ are disjoint, and each contains $E$-classes. Thus, $p/E$ has at least $2^{\aleph_0}$ many elements.
\end{proof}

\subsection{Appendix B}
We see how to recover a real ordered group $(\BR,+)$ from a dense linear order extending $(\BQ,<)$ using Dedekind cut. Consider a language $\CL_{od,\BQ}=\{<\}\cup\{r\}_{r\in\BQ}$ and a $\CL_{od,\BQ}$-structure $\CU=(U,<,r : r\in \BQ)$ which is a saturated dense linear order extending $(\BQ,<)$. Then $\Th(\CU)$ has quantifier elimination.

Consider the $1$-types over empty set, $S_1(\emptyset)(=S_1)$. Then by quantifier elimination, any $1$-type $p$ has one of the following forms : For $r\in \BQ$ and $r'\in \BR\sm\BQ$,
\be 
	\item $\{x=r\}$;
	\item $\{l<x<r|\ l<r\}$;
	\item $\{r<x<u|\ r<u\}$; and
	\item $\{l<x<u|\ l<r'<u\}$.
\ee
For a subset $S\subset \BQ$, we write $S^*:=S\cup\{s\pm\epsilon|\ s\in S\}$, where we consider $\epsilon$ as infinitesimal. So we can identify $S_1$ with the set $\BR\cup \BQ^*$ in the following way :
For $r\in \BQ$ and $r'\in \BR\sm\BQ$,
\be
	\item $\{x=r\}\leftrightarrow r$;
	\item $\{l<x<r|\ l<r\}\leftrightarrow (r-\epsilon)$;
	\item $\{r<x<u|\ r<u\}\leftrightarrow (r+\epsilon)$; and
	\item $\{l<x<u|\ l<r'<u\}\leftrightarrow r'$.
\ee
Next we define a group-like structure on $S_1$. Define a plus-like operation $+^* : S_1\times S_1 \rightarrow \CP(S_1)$ as follows :
$$p_1 +^* p_2:=\{p|\ p\models (l_1+l_2<x<u_1+u_2),\ p_i\models l_i<x<u_i\},$$
and define a minus-like operation $-^* : S_1 \rightarrow S_1$ as follows :
$$(-^* p):=\{-u<x<-l|\ p\models l<x<u\}.$$
We define a composition of plus operation as follows :
$$(p_1 +^* p_2 )+^* p_3:=\bigcup_{p\in p_1 +^* p_2}\limits p+^* p_3,$$
and
$$p_1 +^* (p_2 +^* p_3):=\bigcup_{p\in p_2 +^* p_3}\limits p_1+^* p.$$
Then $+^*$ and $-^*$ is commutative, associate, and distributive. And for any $p_1,\cdots,p_k\in S_1$ and $k\ge 1$,
$$ |p_1+^* \cdots +^* p_k|\le 3$$
We write $p_1 -^* p_2 $ for $p_1+^* (-^*p_2)$. These two notions are naturally assigned to $\BR\cup\BQ^*$ and they are defined as follows :
\be
	\item 
	\be 
		\item If both $r_1$ and $r_2$ are in $\BR$ and let $r=r_1+r_2$, then
		\[r_1+^* r_2:= \left \{
		\begin{array}{ll}
         r & \mbox{if $\{r\}\in \BR\setminus\BQ$}\\
         r & \mbox{if $\{r\}\in \BQ$ and $r_1,r_2\in \BQ$}\\
        \{r-\epsilon,\ r, r+\epsilon\} & \mbox{if $r\in \BQ$ and $r_1,r_2\notin \BQ$}
        \end{array}
        \right.
        \];
		\item If $r_1\in \BR\setminus \BQ$ and $r_2=q\pm\epsilon \in \BQ^*$, then $r_1+^* r_2:=\{r_1+q\}$;
		\item If $r_1\in \BQ$ and $r_2=q\pm\epsilon \in \BQ^*$, then $r_1+^* r_2:=\{(r_1+q)\pm\epsilon\}$;		
		\item If $r_1=p\pm\epsilon$ and $r_2=q\pm\epsilon\in \BQ^*$, then $r_1+^* r_2:=\{(p+q)\pm\epsilon\}$;
		\item If $r_1=p\pm\epsilon$ and $r_2=q\mp\epsilon\in \BQ^*$, then $r_1+^* r_2:=\{(p+q)-\epsilon,(p+q),(p+q)+\epsilon\}$.
	\ee
	
	\item 
	\be 
		\item If $r_1\in \BR$, then $-^*r_1:=-r_1$;
		\item If $r_1=p\pm\epsilon \in \BQ^*$, then $-^*r_1:=-p\mp\epsilon$.
	\ee
\ee
Now we induce a group structure from $(S_1,+^*,-^*)$. Define a equivalence relation $\equiv_0$ on $S_1$,
$$p_1\equiv_0 p_2\ \mbox{iff}\ p_1-^* p_2 \subset \{0-\epsilon,\ 0,\ 0+\epsilon\},$$
and denote $[p]_0$ for $p\in S_1$ for the equivalence class.
Since $\{0-\epsilon,\ 0,\ 0+\epsilon\}$ is closed under $+^*$ and $-^*$, $+^*$ and $-^*$ are extended on $S_1/\equiv_0$. Then $(S_1/\equiv_0,+^*,-^*,[\tp(0)]_0 )$ is a group. Actually it is isomorphic to $(\BR,+,-,0)$.
\begin{theorem}
$(S_1/\equiv_0,+^*,-^*,[\tp(0)]_0)\cong (\BR,+,-,0)$.
\end{theorem}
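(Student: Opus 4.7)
The plan is to exhibit an explicit bijection $\Phi\colon S_1/\!\equiv_0\ \to \BR$ and then verify that it intertwines $(+^*,-^*,[\tp(0)]_0)$ with $(+,-,0)$. First I would work out the $\equiv_0$-classes explicitly from the case analysis of $+^*$ and $-^*$ on $\BR\cup\BQ^*$. For $r\in\BR\setminus\BQ$ the class $[r]_0$ is a singleton, since $r-^*r=\{0\}\subset\{0-\epsilon,0,0+\epsilon\}$ while for any $s\neq r$ in $\BR\cup\BQ^*$ the set $r-^*s$ meets $\BR\setminus\BQ$ or lies outside $\{0-\epsilon,0,0+\epsilon\}$. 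For $q\in\BQ$, the class is exactly $\{q-\epsilon,\,q,\,q+\epsilon\}$, which one checks from clauses (1)(a),(1)(c),(1)(d) of the definition of $+^*$ (and the corresponding clauses for $-^*$). Thus $\Phi$ defined by $\Phi([r]_0):=r$ for $r\in\BR$ and $\Phi([q\pm\epsilon]_0):=q$ is a well-defined bijection onto $\BR$ sending $[\tp(0)]_0=\{0-\epsilon,0,0+\epsilon\}$ to $0$.

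Next I would verify that $+^*$ and $-^*$ pass to the quotient. Using the earlier observation that $\{0-\epsilon,0,0+\epsilon\}$ is closed under $+^*$ and $-^*$, if $p_1\equiv_0 p_1'$ and $p_2\equiv_0 p_2'$ then for any choice $p\in p_1+^*p_2$ and $p'\in p_1'+^*p_2'$ the difference $p-^*p'$ is contained in $(p_1-^*p_1')+^*(p_2-^*p_2')\subseteq\{0-\epsilon,0,0+\epsilon\}+^*\{0-\epsilon,0,0+\epsilon\}\subseteq\{0-\epsilon,0,0+\epsilon\}$, so $p\equiv_0 p'$. Hence $[p_1]_0+^*[p_2]_0$ and $-^*[p_1]_0$ are single classes in $S_1/\!\equiv_0$, which makes $+^*$ and $-^*$ honest binary and unary operations on the quotient.

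To see that $\Phi$ is a homomorphism I would run through the five subcases of the definition of $+^*$ on $\BR\cup\BQ^*$ and check that in each case every element of $r_1+^*r_2$ maps under $\Phi$ to $r_1'+r_2'$, where $r_i'$ denotes the real-number part of $r_i$ (i.e. $r_i$ itself if $r_i\in\BR$, and $q_i$ if $r_i=q_i\pm\epsilon$). This is immediate from inspection: in each subcase the set $r_1+^*r_2$ consists of types all of whose real-number part equals $r_1'+r_2'$. The same case-by-case check handles $-^*$, and the neutrality $\Phi([\tp(0)]_0)=0$ was already noted. Together with the bijectivity from the first paragraph, this shows $\Phi$ is a group isomorphism.

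The only slightly delicate point is the bookkeeping needed to check that $\equiv_0$-classes really are exactly $\{q-\epsilon,q,q+\epsilon\}$ (rational case) or singletons (irrational case); the remainder is a routine case verification dictated by the explicit formulas for $+^*$ and $-^*$. No deeper model-theoretic input is required beyond quantifier elimination for $\Th(\CU)$, which has already been used to identify $S_1$ with $\BR\cup\BQ^*$.
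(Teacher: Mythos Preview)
The paper does not actually supply a proof of this theorem: it is stated immediately after the explicit case description of $+^*$ and $-^*$ on $\BR\cup\BQ^*$, preceded only by the remark that $\{0-\epsilon,0,0+\epsilon\}$ is closed under $+^*$ and $-^*$ so that the operations descend to $S_1/\!\equiv_0$. Your proposal is therefore not competing with a proof in the paper but filling in the verification the author left to the reader, and it does so correctly. Your identification of the $\equiv_0$-classes (singletons at irrationals, triples $\{q-\epsilon,q,q+\epsilon\}$ at rationals) is right and follows from the case list; the map $\Phi$ you define is exactly the ``real part'' projection, and the case-by-case check that every element of $r_1+^*r_2$ has real part $r_1'+r_2'$ is precisely what is needed.

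One small remark on your second paragraph: the containment $p-^*p'\subseteq (p_1-^*p_1')+^*(p_2-^*p_2')$ is not entirely formal from the definitions alone, since $+^*$ is multi-valued; it uses the commutativity, associativity, and the distributivity of $-^*$ over $+^*$ that the paper asserts just before the theorem. You are entitled to cite those, but it would be cleaner simply to observe that your third-paragraph case check already shows that the ``real part'' map $\BR\cup\BQ^*\to\BR$ sends every element of $r_1+^*r_2$ to $r_1'+r_2'$; this single observation simultaneously gives well-definedness on the quotient and the homomorphism property, making the separate well-definedness argument redundant.
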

\noindent And define a equivalence relation $\equiv_{\BZ}$ on $S_1$,
$$p_1\equiv_{\BZ} p_2\ \mbox{iff}\ p_1 -^* p_2 \subset \BZ^*,$$
and denote $[p]_{\BZ}$ for the equivalence class.
As same as $\equiv_0$, $\BZ^*$ is closed under $+^*$ and $-^*$ and $+^*$ and $-^*$ are extended on $S_1/\equiv_{\BZ}$. And $(S_1/\equiv_{\BZ},+^*,-^*,[\tp(0)]_{\BZ} )$ is isomorphic to $(\BR/\BZ,+,-,0)$ as groups.
\begin{theorem}
$(S_1/\equiv_{\BZ},+^*,-^*,[0]_{\BZ} )\cong (\BR/\BZ,+,-,0)$.
\end{theorem}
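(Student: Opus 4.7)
The plan is to derive this isomorphism from the preceding theorem (which provides an isomorphism $\phi\colon S_1/\equiv_0 \to \BR$) together with the first isomorphism theorem for abelian groups.

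First I would observe that $\equiv_0$ refines $\equiv_{\BZ}$: since $\{0-\epsilon, 0, 0+\epsilon\} \subset \BZ^*$, any $p_1, p_2 \in S_1$ with $p_1 -^* p_2 \subset \{0-\epsilon, 0, 0+\epsilon\}$ automatically satisfy $p_1 -^* p_2 \subset \BZ^*$. Hence the identity on $S_1$ descends to a canonical surjective map $q\colon S_1/\equiv_0 \to S_1/\equiv_{\BZ}$ with $q([p]_0) = [p]_{\BZ}$; it respects $+^*$ and $-^*$ because both operations are already well-defined and single-valued on each of the two quotients.

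Next I would compute $\ker(q)$ inside $S_1/\equiv_0$ and transport it through $\phi$. By definition, $[p]_0 \in \ker(q)$ iff $p -^* 0 \subset \BZ^*$, iff $p \in \BZ^*$. Using the identification of $S_1$ with $\BR \cup \BQ^*$, the elements of $\BZ^*$ are exactly the integers $n \in \BZ$ together with the infinitesimal types $n \pm \epsilon$ for $n \in \BZ$, and for each such $n$ the three types $n - \epsilon, n, n + \epsilon$ are $\equiv_0$-equivalent and map under $\phi$ to the real number $n$. Conversely, if $\phi([p]_0) = n \in \BZ$, then $[p]_0 = [n]_0$, so $p \in \{n-\epsilon, n, n+\epsilon\} \subset \BZ^*$ and $[p]_0 \in \ker(q)$. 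Therefore $\phi$ restricts to a bijection $\ker(q) \leftrightarrow \BZ$.

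The conclusion then follows from the first isomorphism theorem: $S_1/\equiv_{\BZ} \cong (S_1/\equiv_0)/\ker(q) \cong \BR/\BZ$, with $[0]_{\BZ}$ mapped to $0 + \BZ$. The only delicate point is the kernel computation, specifically ruling out that some $\equiv_0$-class mapping to a non-integer real could contain an element of $\BZ^*$; this is handled by the elementary case analysis of the forms a $1$-type can take in $\BR \cup \BQ^*$, using how $\phi$ was constructed from Dedekind cuts in the proof of the previous theorem.
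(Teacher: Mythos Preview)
Your proof is correct. The paper itself does not supply a proof of this theorem: it is stated immediately after the remark ``As same as $\equiv_0$, $\BZ^*$ is closed under $+^*$ and $-^*$ and $+^*$ and $-^*$ are extended on $S_1/\equiv_{\BZ}$,'' and is left to the reader as an evident consequence of the explicit Dedekind-cut description of $S_1$ and the operations $+^*$, $-^*$. Your route --- observing that $\equiv_0$ refines $\equiv_{\BZ}$, passing to the quotient map $q\colon S_1/\equiv_0 \to S_1/\equiv_{\BZ}$, identifying $\ker(q)$ with $\BZ$ under the isomorphism $\phi$ of the preceding theorem, and invoking the first isomorphism theorem --- is a clean and rigorous way to fill in what the paper omits. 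The kernel computation is correct: the only $\equiv_0$-classes meeting $\BZ^*$ are the classes $[n]_0$ for $n\in\BZ$, since $p -^* 0 = \{p\}$ and each $\equiv_0$-class of an element of $\BQ^*\cup\BQ$ is exactly a triple $\{n-\epsilon,n,n+\epsilon\}$.
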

These two equivalences $\equiv_0$ and $\equiv_{\BZ}$ are defined on $\BR\cup \BQ^*$ and
$$(\BR\cup \BQ^*)/\equiv_0 \cong \BR\ \mbox{and}\ (\BR\cup \BQ^*)/\equiv_{\BZ} \cong \BR/\BZ.$$
Moreover we can define a multiplication-like operation $\times^*$ on $S_1$ as similar as the plus-like operation $+^*$. Note that for $r_0\in \BQ$ and $r\in \BR\cup\BQ^*$,

$
\begin{array}{c c l l}
\tp(r_0)\times^* \tp(r) & := & \tp(r_0r) & \mbox{if } r\in \BR \\
& & \tp(r_0q+\epsilon) & \mbox{if } r=q+\epsilon,\ q\in \BQ \\
& & \tp(r_0q-\epsilon) & \mbox{if } r=q-\epsilon,\ q\in \BQ \\
\end{array}
$

These plus-, minus-, multiplication-like operations make 
$$(S_1/\equiv_0,[\tp(0)]_0,[\tp(1)]_0,+^*,-^*,\times^*) \cong (\BR,0,1,+,-,\times),$$
and
$$(S_1/\equiv_{\BZ},[\tp(0)]_{\BZ},[\tp(1)]_{\BZ},+^*,-^*,\times^*) \cong (\BR/\BZ,0,1,+,-,\times).$$


\begin{thebibliography}{99}
\bibitem{CLPZ}
Enrique Casanovas, Daniel Lascar, Anand Pillay, and Martin Ziegler.
\newblock Galois groups of first order theories.
\newblock {\it Journal of Mathematical Logic}, {\bf 1} (2001) 305-319.

\bibitem{EO}
Clifton Ealy and Alf Onshuus.
\newblock Characterizing rosy theories.
\newblock {\em  Journal of Symbolic Logic},  {\bf 72} (2007)
919-940.

\bibitem{GKK1}
John Goodrick, Byunghan Kim, and Alexei Kolesnikov.
\newblock Homology groups of types in model theory and the computation of $H_2(p)$.
\newblock {\it Journal
of Symbolic Logic}, {\bf 78} (2013), pp. 1086-1114.

\bibitem{GKK}
John Goodrick, Byunghan Kim, and Alexei Kolesnikov.
\newblock Amalgamation functors and homology groups in model theory.
\newblock will appear in the proceedings of ICM 2014.

\bibitem{GKK2}
John Goodrick, Byunghan Kim, and Alexei Kolesnikov.
\newblock Homology groups of types in stable theories and the Hurewicz correspondence.
\newblock Submitted.

\bibitem{K}
Byunghan Kim.
\newblock {\em Simplicity Theory},
\newblock Oxford University Press, (2014).

\bibitem{KKT}
Byunghan Kim, Alexei Kolesnikov, and Akito Tsuboi.
\newblock Generalized amalgamation and $n$-simplicity.
\newblock {\it Annals of Pure and Applied Logic}, {\bf 155} (2008), 97-114

\bibitem{KKL}
Byunghan Kim, SunYoung Kim, and Junguk Lee,
\newblock A classification of 2-chains having 1-shell boundaries in rosy theories,
\newblock {\it Journal of Symbolic Logic},
{\bf 80}, (2015), 322-340.

\bibitem{KL}
SunYoung Kim, and Junguk Lee.
\newblock More on 2-chains with 1-shell boundaries in rosy theories.
\newblock accepted in {\it Journal of the mathematical society of Japan}.

\bibitem{KrT}
Krzysztof Krupinski, and Tomasz Rzepecki.
\newblock Smoothness of bounded invariant equivalence relations.
\newblock accepted in {\it Journal of Symbolic Logic}.


\bibitem{N}
Ludomir Newelski.
\newblock The diameter of a Lascar strong type.
\newblock {\it Fundamenta Mathematicae}, {\bf 176} (2003), 157-170.

\bibitem{P}
Bruno Poizat.
\newblock {\em A course in model theory: an introduction to contemporary mathematical logic},
\newblock Springer, (2000).
\end{thebibliography}
\end{document}